\newcommand{\Set}{\mathbf{Set}}
\newcommand{\Sets}{\ensuremath{\Set}}
\newcommand{\LH}{\mathbf{LH}}
\newcommand{\cat}[1]{\mathcal{#1}} 
\newcommand{\escat}[1]{\cat{#1}}
\newcommand{\Psh}[1]{\widehat{#1}}
\newcommand{\Shv}{\mathbf{Shv}}
\newcommand{\calC}{\ensuremath{\escat{C}}} 
\newcommand{\calE}{\ensuremath{\escat{E}}} 
\newcommand{\calF}{\ensuremath{\escat{F}}} 
\newcommand{\frakI}{\mathfrak{I}}
\newcommand{\opCat}[1]{\ensuremath{{#1}^{\mathrm{op}}}}
\newcommand{\Nat}{\ensuremath{\mathbb{N}}}
\newcommand{\Inv}{\mathrm{Inv}}
\newcommand{\bcu}{\mathrm{BCU}}
\newcommand{\below}[1]{(\downarrow\!{#1})}
\newcommand{\Rig}{\mathbf{Rig}}
\newcommand{\Ring}{\mathbf{Ring}}
\newcommand{\iRig}{\mathbf{iRig}}
\newcommand{\rRig}{\mathbf{rRig}}
\newcommand{\dLat}{\mathbf{dLat}}
\newcommand{\riRig}{\mathbf{riRig}}
\newcommand{\twopl}[2]{\ensuremath{\left\langle #1, #2 \right\rangle}}
\theoremstyle{plain}
\newtheorem{theorem}{Theorem}[section]
\newtheorem{corollary}[theorem]{Corollary}
\newtheorem{proposition}[theorem]{Proposition}
\newtheorem{lemma}[theorem]{Lemma}
\theoremstyle{definition}
\newtheorem{definition}[theorem]{Definition}
\newtheorem{remark}[theorem]{Remark}
\title{A representation theorem for integral rigs  and \\ its applications to residuated lattices}
\author{J.~L.~Castiglioni \and M. Menni \and W. J. Zuluaga Botero}
\begin{document}
\maketitle

\begin{abstract}
We prove that every integral rig in $\Sets$ is (functorially) the rig of global sections of a sheaf of really local integral rigs.  We also show that this representation result may be lifted to residuated integral rigs and then restricted to varieties of these. In particular, as a corollary, we obtain a representation theorem for pre-linear residuated join-semilattices in terms of totally ordered fibers. The restriction of this result to the level of MV-algebras coincides with the Dubuc-Poveda representation theorem.
\end{abstract}

\tableofcontents

\section{Introduction and background}

From theories of representation of rings by sheaves, due to
Grothendieck \cite{ega1}, Pierce \cite{Pierce67} and Dauns and Hoffman \cite{DaunsHofmann66},
general constructions of sheaves to universal algebras \cite{Davey73} evolved.
 All these representations where developed using  toposes of local homeos over  adequate
spaces. A concrete example of the method employed in the case of
bounded distributive lattices can be found in \cite{BrezuleanuDiaconescu69}.

In this line, Filipoiu and Georgescu find in \cite{FG} an equivalence
between MV-algebras and certain type of sheaves of MV-algebras over compact Hausdorff spaces.
In the same line, a presentation closer to the construction given by Davey in
\cite{Davey73}, is given by  Dubuc and Poveda in \cite{DubucPoveda2010}. They find an adjunction between MV-algebras and another version of MV-spaces. In \cite{FL} it is proposed a
third kind of representation for MV-algebras using fibers that are certain local MV-algebras.

As examples of representation by sheaves of other classes of
residuated structures, we can quote the Grothendieck-type duality
for Heyting algebras proposed in \cite{DNG} and the one given by Di Nola
and Leu\c stean in \cite{DiNolaLeustean2003} for BL-algebras.

   A more explicit use of topos theory is exemplified by the representation theorems for rings and lattices proved by Johnstone \cite{Johnstone77a} and Coste \cite{Coste79}. See also \cite{Cole}.

   The present work is motivated by the Dubuc-Poveda representation theorem for MV-algebras \cite{DubucPoveda2010, DubucErratum2010} and Lawvere's strategic ideas about the topos-theoretic analysis of coextensive algebraic categories hinted at in page 5 of \cite{Lawvere91} and also in \cite{Lawvere08}. For our main result we will borrow the notion of {\em really local rig} introduced in the unpublished \cite{LawvereEmailCoquand}.

The main result of this paper is a representation theorem for integral
rigs as internal really local integral rigs in toposes of sheaves
over bounded distributive lattices (Theorem~\ref{ThmMain}).
   We also show that this representation result may be lifted to residuated integral rigs and then restricted to varieties of these.
   In particular, as a corollary, we obtain a representation theorem for pre-linear residuated join-semilattices in terms of totally ordered fibers. The restriction of this result to the level of MV-algebras coincides with the Dubuc-Poveda representation theorem.

   We stress that our main results do not use topological spaces. Instead, we use the well-known equivalence between the topos of sheaves over a topological space $X$ and the topos of local homeos over $X$ in order to translate our results to the language of bundles.
This translation allows us to compare our results with related work.

   Sections~\ref{SecRigs} to~\ref{SecChineseReticulation} introduce the category of  integral rigs.
   Sections~\ref{SecReticInGroth} to \ref{SecFuntorialityOfRepresentation} recall the necessary background on topos theory and proves the main theorem, a representation result for integral rigs.
   Section~\ref{SecIntegralResiduatedRigs} shows how to apply our result to prove representation results for different categories of residuated join-semilattices.
   In order to make our results more accessible to a wider audience we explain, in Section~\ref{SecLH}, how to express our results in terms of local homeos over spectral spaces.
   Finally, in Section~\ref{SecMV}, we compare our corollary for MV-algebras with the motivating Dubuc-Poveda representation.

\section{The coextensive category of rigs}
\label{SecRigs}

   In this section we recall the notion of extensive category \cite{Lawvere91} an introduce the coextensive category of rigs \cite{Schanuel91, Lawvere08}.

\begin{definition}\label{DefExtensiveCategory}
A category $\calC$ is called {\em extensive} if it has finite coproducts and the canonical functors ${1 \rightarrow \calC/0}$ and ${\calC/X \times \calC/Y \rightarrow \calC/(X + Y)}$ are equivalences.
\end{definition}

   For example, any topos and the category of topological spaces are extensive. In contrast, an Abelian category is extensive if and only if it is trivial. If the opposite ${\opCat{\calC}}$ of a category $\calC$ is extensive then we may say that $\calC$ is {\em coextensive}. For example, the categories $\Ring$ and $\dLat$ of (commutative) rings and distributive lattices are coextensive.
   We will use the following characterization proved in \cite{CarboniLackWalters}.

\begin{proposition}\label{PropCharExtensivity}
A category with finite limits $\calC$ is extensive if and only if the following two conditions hold:
\begin{enumerate}
\item (Coproducts are disjoint.) For every $X$ and $Y$ the square below is a pullback
$$\xymatrix{
0 \ar[d]_-{!} \ar[r]^-{!} & Y \ar[d]^-{in_1} \\
X \ar[r]_-{in_0} & X + Y
}$$
\item (Coproducts are universal.) For every ${f:X \rightarrow 1 + 1}$, if the squares below are pullbacks
$$\xymatrix{
X_0 \ar[d] \ar[r] & X \ar[d]^-f & \ar[l] X_1 \ar[d] \\
1 \ar[r]_-{in_0} & 1 + 1 & \ar[l]^-{in_1}  1
}$$
then the top cospan ${X_0 \rightarrow X \leftarrow X_1}$ is a coproduct diagram.
\end{enumerate}
\end{proposition}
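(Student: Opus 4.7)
The plan is to prove both implications, noting that the forward direction follows quickly from the definition while the reverse direction requires constructing the claimed equivalences explicitly from (1) and (2).

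For the forward direction, assume $\calC$ is extensive. Disjointness (1) follows from $1 \simeq \calC/0$, since both $X \times_{X+Y} 0$ and $Y \times_{X+Y} 0$ are objects of $\calC/0$ and hence isomorphic to $0$; assembling these pullbacks along $in_0, in_1$ gives the desired square. For universality (2), given $f : X \to 1+1$ with pullbacks $X_0, X_1$, the functor $\calC/(1+1) \to \calC/1 \times \calC/1$ obtained by pullback along the coprojections is a quasi-inverse of the equivalence from Definition~\ref{DefExtensiveCategory}. Its quasi-inverse sends $(X_0 \to 1, X_1 \to 1)$ to $X_0 + X_1 \to 1+1$, and the round trip applied to $f$ exhibits an isomorphism $X \cong X_0 + X_1$ over $1+1$, which is exactly the required coproduct structure.

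For the reverse direction, assume (1) and (2). I would first derive strict initiality of $0$: given $A \to 0$, apply (2) to the composite $A \to 0 \to 1+1$ through $in_0$; the pullback along $in_1$ is $0$ by disjointness, while the pullback along $in_0$ is $A$ itself (checked directly, using that $in_0$ is a monomorphism, a fact that in turn follows from disjointness by a pullback-pasting argument). Hence $A$ is exhibited as a coproduct $A + 0$ with the given $A \to 0$ as one coprojection, forcing $A \cong 0$. This yields $1 \simeq \calC/0$. Next, I construct the equivalence $\calC/X \times \calC/Y \simeq \calC/(X+Y)$ using the coproduct functor in one direction and pullback along coprojections in the other. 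The crucial lemma is a strengthened form of universality: for any $h : Z \to X+Y$, the pullbacks of $h$ along $in_0, in_1$ exhibit $Z$ as a coproduct $Z_0 + Z_1$. I would deduce this from (2) applied to the composite $Z \to X+Y \to 1+1$ induced by $!_X, !_Y$, using pullback pasting to identify the resulting pullback squares.

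The main obstacle will be the other round trip, namely verifying that pulling back the coproduct map $f+g : A+B \to X+Y$ along $in_0 : X \to X+Y$ recovers $f : A \to X$. For this I would pull back the inclusions $in_0 : A \to A+B$ and $in_1 : B \to A+B$ separately along $in_0 : X \to X+Y$: disjointness (applied both to $A+B$ and to $X+Y$) forces the pullback of $in_1$ to be $0$, while the pullback of $in_0 : A \to A+B$ is $A$ itself via the evidently commuting square built from $f : A \to X$. Reassembling these two squares via the strengthened universality lemma yields the required isomorphism in $\calC/(X+Y)$, and naturality of both constructions is then routine.
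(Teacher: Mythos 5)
The paper does not actually prove Proposition~\ref{PropCharExtensivity}: immediately before the statement it cites \cite{CarboniLackWalters} for the proof. So there is no in-paper argument to compare with; what follows is an evaluation of your proposal on its own terms.

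Your overall plan (forward direction via the equivalence; reverse direction via strict initiality of $0$, a strengthened universality lemma for arbitrary maps into $X+Y$, and then the two round trips of the equivalence) is the standard one from Carboni--Lack--Walters. But the strict initiality step contains a genuine error. Given ${h: A \rightarrow 0}$, consider ${f = \, ! \circ h : A \rightarrow 1+1}$ and apply~(2). You claim the pullback of $f$ along $in_1$ is $0$ ``by disjointness.'' It is not: since $0$ is initial, $f$ factors through $in_1$ just as well as through $in_0$ (both factorizations give the same composite), so the cone condition for the pullback along $in_1$ is satisfied automatically by \emph{every} map into $A$; the pullback is therefore (isomorphic to) $A$, not $0$. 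Your alternative computation via the $in_0$--factorization gives ${A \times_1 0 = A \times 0}$, but one cannot yet conclude ${A \times 0 \cong 0}$ without the very strict initiality being proved. Consequently your next sentence, ``$A$ is exhibited as a coproduct $A+0$ \ldots forcing $A \cong 0$,'' also fails: ${A \cong A + 0}$ holds in every category and forces nothing. What~(2) actually yields here is that $A$ is the coproduct of $A$ and $A$ with \emph{both} coprojections equal to $id_A$ (after the canonical identifications). From this one deduces that any two parallel maps out of $A$ coincide; applying this to $id_A$ and the composite ${A \rightarrow 0 \rightarrow A}$, and using initiality of $0$ for the other composite, gives ${A \cong 0}$. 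Repairing the step along these lines keeps the rest of your sketch intact (in particular the strengthened universality lemma, the monicity of the general $in_0: X \rightarrow X+Y$, and the round-trip calculations, all of which depend on strict initiality and so must come after it). A smaller remark: your forward-direction argument for disjointness via ``${X \times_{X+Y} 0}$ and ${Y \times_{X+Y} 0}$ live in $\calC/0$'' addresses the wrong pullbacks (the disjointness square is about ${X \times_{X+Y} Y}$); the clean route is to apply the quasi-inverse of the coproduct equivalence to the object ${in_0 : X \rightarrow X+Y}$ of ${\calC/(X+Y)}$ and read off that its second component is $0$.
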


   The fact that $\Ring$  is coextensive rests on the fact that product decompositions of a ring are in correspondence with idempotents. Something analogous happens for $\dLat$. This analogy can be better explained by considering $\Ring$ and $\dLat$ as subcategories of the category of rigs introduced in \cite{Schanuel91}.

\begin{definition}\label{DefRig}
A {\em rig} is a structure ${(A, \cdot, 1, +, 0)}$ such that ${(A, \cdot, 1)}$ and ${(A, +, 0)}$ are commutative monoids and distributivity holds in the sense that ${a \cdot 0 = 0}$ and ${(a + b) \cdot c = a\cdot c + b\cdot c}$ for all ${a, b, c \in A}$.
\end{definition}

   As usual, we may avoid to write $\cdot$ in calculations and simply use juxtaposition.

   From Definition~\ref{DefRig} it is trivial to read off the presentation of an algebraic theory.
   We emphasize this because it is fundamental for our work that we can consider algebras in categories with finite products that are not necessarily the category $\Set$ of sets functions. In particular, we will consider rigs in toposes of sheaves over spectral spaces.

   For the moment let ${\Rig}$ be the algebraic category of rigs in $\Set$.
   As already suggested,  $\Rig$  embeds the  categories $\Ring$ and $\dLat$  and, moreover, shares with them the property of coextensivity \cite{Lawvere08}. The proof of this fact rests on the following concept.

\begin{definition}\label{DefBooleanElement}
An element $a$ in a rig $A$ is called {\em Boolean} if there exists an ${a'\in A}$ such that ${a + a' = 1}$ and ${a a' = 0}$.
\end{definition}

   Every Boolean element is idempotent and, in the case of rings, the converse holds.
   Just as in that case, Boolean elements in a rig correspond to its product decompositions. It follows from this that $\Rig$ is coextensive.

\begin{definition}[The canonical pre-order of a rig]
   Every rig $A$ is naturally equipped with a pre-order $\leq$ defined by ${x \leq y}$ if and only if there is a ${w \in A}$ such that ${w + x = y}$. Addition and multiplication are easily seen to be monotone with respect to this  pre-order.
\end{definition}

   The equation  ${1+1 = 1}$ holds in a rig $A$ if and only if addition is idempotent. In this case the canonical pre-order is a poset and coincides with that making the semilattice  ${(A, +, 0)}$ into a join semilattice.

   Let $\calE$ be a category with finite limits. For any rig $A$ in $\calE$  we define the subobject ${\Inv(A) \rightarrow A\times A}$ by declaring that the  diagram  below
$$\xymatrix{
\Inv(A) \ar[d] \ar[r]^-{!}  & 1 \ar[d]^-{1} \\
A\times A \ar[r]_-{\cdot} & A
}$$
is a pullback. The two projections ${\Inv(A) \rightarrow A}$ are mono in $\calE$ and induce the same subobject of $A$. Of course, the multiplicative unit ${1:1 \rightarrow A}$ always factors through ${\Inv A \rightarrow A}$. In particular, if $A$ is a distributive lattice then the factorization ${1:1 \rightarrow \Inv A}$ is an iso.

\begin{definition}\label{DefLocalMap}
A rig morphism ${f:A \rightarrow B}$ between rigs in $\calE$ is {\em local} if the following diagram
$$\xymatrix{
\Inv A \ar[d] \ar[r] & \Inv B \ar[d] \\
A \ar[r]_-f & B
}$$
is a pullback.
\end{definition}

If $\calE$ is a topos with subobject classifier ${\top:1 \rightarrow \Omega}$ then there exists a unique map ${\iota:A \rightarrow \Omega}$ such that the square below
$$\xymatrix{
 \Inv(A) \ar[d] \ar[r]^-{!}  & 1 \ar[d]^-{\top}\\
 A\ar[r]_-{\iota} & \Omega
}$$
is a pullback. It is well-known that the object $\Omega$ is an internal Heyting algebra and so, in particular, a distributive lattice. The basic properties of invertible elements imply that ${\iota:A \rightarrow \Omega}$ is a morphism of multiplicative monoids. The following definition is borrowed from \cite{LawvereEmailCoquand}.

\begin{definition}\label{DefReallyLocal}
The rig $A$ in $\calE$ is {\em really local} if ${\iota:A \rightarrow \Omega}$ is a morphism of rigs.
\end{definition}

In other words, $A$ is really local if ${\iota}$ is a map of additive monoids.
For example, a distributive lattice $D$ in $\Set$ is really local if and only if it is non-trivial and, for any ${x, y \in D}$, ${x \vee y = \top}$ implies ${x = \top}$ or ${y = \top}$.

The pullback  above Definition~\ref{DefReallyLocal} implies that if $A$ is really local then the morphism ${\iota:A \rightarrow \Omega}$ is local in the sense of Definition~\ref{DefLocalMap}. So, a rig $A$ is really local if and only if there is a (necessarily unique) local map of rigs ${A \rightarrow \Omega}$.

\section{The coextensive category of integral rigs}
\label{The coextensive category of integral rigs}

   As observed in Section~8 of \cite{Lawvere08}, distributive lattices are exactly the rigs satisfying ${x^2 = x}$ and ${1 + x = 1}$. The first equation says the multiplication is idempotent. It will be convenient to introduce some terminology for the second.

\begin{definition}\label{DefIntegralRig} A rig is called {\em integral} if ${1 + x = 1}$ holds.
\end{definition}

   If $A$ is an integral rig then, in particular, ${1 + 1 = 1}$ and so the canonical pre-order is a join-semilattice ${(A, +, 0)}$ such that  ${x \leq 1}$ for all $x$.
   Let us emphasize the following fact, stated in page~{502} of \cite{Lawvere08}.
   Say that $b$ {\em divides} $a$ in a rig if there exists $u$ such that ${b u = a}$.

\begin{lemma}\label{LemDividesImpliesAbove}
If  $b$ divides $a$ in an integral rig then ${a\leq b}$. In particular, ${x y \leq x}$ and ${x y \leq y}$ for any ${x, y \in A}$.
\end{lemma}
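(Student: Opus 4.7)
The plan is a short direct calculation, exploiting only the integrality equation ${1+x=1}$, distributivity, and the definition of the canonical pre-order. The whole argument fits in one line; the main substantive observation is just recognizing which equation to multiply.

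Suppose $b$ divides $a$, so that there exists ${u \in A}$ with ${b u = a}$. I would first apply the integrality law to the multiplier $u$ to get ${1 + u = 1}$. Then I would multiply both sides on the left by $b$ and expand via distributivity:
\[
b + a \;=\; b + b u \;=\; b\cdot(1 + u) \;=\; b\cdot 1 \;=\; b.
\]
This equality ${b + a = b}$ is exactly a witness (with $w = b$) that ${a \leq b}$ in the canonical pre-order introduced earlier.

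For the two concrete consequences, I would simply instantiate this with ${b := x}$ and ${u := y}$, which gives ${xy \leq x}$; then by commutativity of multiplication, setting ${b := y}$ and ${u := x}$ gives ${xy \leq y}$.

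There is essentially no obstacle to speak of: the only thing one has to notice is that integrality (${1 + x = 1}$) is a kind of ``absorption at the top'' that, when multiplied through by an arbitrary element $b$ and combined with distributivity, yields the absorption identity ${b + b u = b}$, which is precisely the divisibility-implies-above statement. No case analysis, no induction, and no appeal to Boolean elements or to the results of Section~\ref{SecRigs} beyond the definitions already recalled in the excerpt are needed.
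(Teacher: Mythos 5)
Your proof is correct and follows exactly the paper's one-line computation: multiplying the integrality identity $1+u=1$ by $b$ and expanding via distributivity to obtain $b+a = b(1+u) = b$, which witnesses $a \leq b$ with $w=b$. No meaningful difference in approach.
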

\begin{proof}
The equality ${b u = a}$ implies that ${a + b = b u + b = b(u+ 1) = b}$.
\end{proof}

   This is, of course,  a generalization of the implication ${(b \wedge u = a) \Rightarrow (a\leq b)}$ that holds in distributive lattices. Another relevant fact is the following.

\begin{lemma}\label{LemIntegralImpliesInvertiblesAreTrivial}
If $A$ is integral then the canonical ${1 \rightarrow \Inv(A)}$ is an iso.
\end{lemma}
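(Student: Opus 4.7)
The plan is to reduce the claim to a purely equational statement about generalized elements and then invoke the two facts just established: the divisibility/order relation of Lemma~\ref{LemDividesImpliesAbove} and the idempotence of addition in integral rigs. Since the composite $1 \to \mathrm{Inv}(A) \hookrightarrow A \times A$ equals $\langle 1,1 \rangle$ and $\mathrm{Inv}(A) \to A \times A$ is mono, showing that the canonical $1 \to \mathrm{Inv}(A)$ is an iso amounts to showing that both projections $p_1, p_2 \colon \mathrm{Inv}(A) \to A$ factor through $1 \colon 1 \to A$. Equivalently, I need to show that for every generalized element $(x,y)$ of $\mathrm{Inv}(A)$, i.e.\ every $(x,y)$ with $xy = 1$, one has $x = y = 1$.

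For the second step, fix such a pair $(x,y)$. Since $xy = 1$, the element $x$ divides $1$, so Lemma~\ref{LemDividesImpliesAbove} yields $1 \leq x$. On the other hand, integrality gives $1 + x = 1$, which is precisely the statement $x \leq 1$ in the canonical pre-order. Combining these two inequalities using the fact (noted just after Definition~\ref{DefIntegralRig}) that in an integral rig the canonical pre-order is the partial order of the join-semilattice $(A,+,0)$, I conclude $x = 1$. The same reasoning with the roles of $x$ and $y$ exchanged gives $y = 1$.

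I do not expect a real obstacle here; the argument is wholly equational and therefore transports from the language of elements in $\mathbf{Set}$ to that of generalized elements in an arbitrary category $\calE$ with finite limits, which is the generality in which $\mathrm{Inv}(A)$ was defined. The only mild subtlety worth stating explicitly is that antisymmetry of $\leq$ is not automatic for the pre-order of an arbitrary rig; it is used here precisely because integrality forces $1+1 = 1$ and hence $x + x = x(1+1) = x$, turning $\leq$ into a genuine partial order.
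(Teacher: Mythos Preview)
Your proof is correct and follows exactly the same line as the paper's: an invertible element $x$ divides $1$, so Lemma~\ref{LemDividesImpliesAbove} gives $1 \leq x$, while integrality gives $x \leq 1$, hence $x = 1$. The paper compresses this into a single sentence and leaves the antisymmetry and the passage to generalized elements implicit; your version spells these out, but there is no substantive difference in approach.
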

\begin{proof}
If $x$ is invertible then $x$ divides $1$ and so ${1\leq x}$ by Lemma~\ref{LemDividesImpliesAbove}.
\end{proof}

   It follows that the problem of inverting elements in integral rigs is equivalent to the problem of forcing these elements to be $1$. For any subset ${S\subseteq A}$ let us write ${A \rightarrow A[S^{-1}]}$ for any solution to the universal problem of inverting all the elements of $S$. The notation is chosen so as to emphasize the relation with localizations in the case of classical commutative algebra.

   Fix an integral rig $A$ and let ${F\rightarrow A}$ be a multiplicative submonoid.
   For any ${x, y \in A}$ write ${x \mid_F y}$ if there exists ${w \in F}$ such that ${w x \leq y}$. It is easy to check that $\mid_F$ is a pre-order.

\begin{lemma}\label{LemSubMultiplicationsCanBeCollapsed}
If $A$ is integral and ${F\rightarrow A}$ is a multiplicative submonoid then the equivalence relation $\equiv_F$ determined by the pre-order $\mid_F$ is a congruence and the quotient ${A \rightarrow A/{\equiv_F}}$ has the universal property of ${A \rightarrow A[F^{-1}]}$.
\end{lemma}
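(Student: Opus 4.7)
The plan is to break the proof into three steps: (i) verify that $\equiv_F$ is a congruence on $A$, (ii) check that the quotient map $q: A \to A/{\equiv_F}$ sends every element of $F$ to an invertible (in fact, to $1$), and (iii) establish the universal property by showing that any integral rig morphism $A \to B$ inverting $F$ factors uniquely through $q$.

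For (i), reflexivity and symmetry of $\equiv_F$ are built into its definition as the symmetric part of $\mid_F$, so the substantive content is stability under the rig operations. Given witnesses $w_1 x \leq x'$, $w_1' x' \leq x$ and $w_2 y \leq y'$, $w_2' y' \leq y$ with all constants in $F$, the product case follows directly from monotonicity: $w_1 w_2 \in F$ and $w_1 w_2 \, xy \leq x' y'$. The sum case is where integrality enters essentially. From $w_1 x \leq x'$ I multiply by $w_2$ to get $w_1 w_2 x \leq w_2 x'$, and then Lemma~\ref{LemDividesImpliesAbove} yields $w_2 x' \leq x'$, combining to $w_1 w_2 x \leq x'$; symmetrically $w_1 w_2 y \leq y'$. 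Adding and using monotonicity of $+$ gives $w_1 w_2 (x+y) \leq x' + y'$ with common witness $w_1 w_2 \in F$. I expect this absorption trick to be the main obstacle, since without the integrality inequality $ab \leq a$ the separate witnesses $w_1, w_2$ would not combine into a single one.

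For (ii), it is enough to show $w \equiv_F 1$ for every $w \in F$, forcing $q(w) = 1$ in the quotient. The relation $1 \mid_F w$ is witnessed by $w \in F$ itself via $w \cdot 1 \leq w$, and $w \mid_F 1$ is witnessed by $1 \in F$ via $1 \cdot w = w \leq 1$, the latter inequality being exactly integrality.

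For (iii), let $f : A \to B$ be a morphism of integral rigs with $f(w)$ invertible in $B$ for each $w \in F$. Since $B$ is integral, Lemma~\ref{LemIntegralImpliesInvertiblesAreTrivial} upgrades ``invertible'' to ``equal to $1$'', so $f(w) = 1$ for all $w \in F$. If $wx \leq y$ in $A$ with $w \in F$, writing $z + wx = y$ and applying $f$ gives $f(z) + f(x) = f(y)$, whence $f(x) \leq f(y)$; by symmetry $f$ collapses $\equiv_F$, and the first isomorphism theorem for algebras then provides the unique factorization through $q$.
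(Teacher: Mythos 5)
Your proof is correct and takes essentially the same route as the paper's: same absorption trick for the sum case (you apply the integrality inequality before adding, the paper after, but the witness $w_1 w_2$ and the use of Lemma~\ref{LemDividesImpliesAbove} are identical), same observation that $1 \mid_F w$ and $w \mid_F 1$ collapse $F$ to $1$, and the same monotonicity argument for the factorization. The only cosmetic difference is that you explicitly invoke Lemma~\ref{LemIntegralImpliesInvertiblesAreTrivial} to pass from ``$f$ inverts $F$'' to ``$f$ sends $F$ to $1$,'' which the paper leaves implicit.
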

\begin{proof}
It is easy to check that the pre-order $\mid_F$ is compatible with multiplication and then so is the induced equivalence relation $\equiv_F$.
To prove that $\mid_F$ is compatible with addition assume that ${x\mid_F x'}$ and ${y \mid_F y'}$ so that there are  ${w, v\in F}$ such that ${w x \leq x'}$ and ${v y \leq y'}$.
Then ${v w  (x + y) = v w  x + w v y \leq v x' + w y' \leq x' + y'}$. (Notice the importance of integrality in the last step.) This completes the proof that $\equiv_F$ is a congruence. To prove that the quotient ${A \rightarrow A/{\equiv_F}}$ collapses $F$ to $1$ notice that for any ${u \in F}$, ${u 1 \leq u}$ and so ${1 \mid_F u}$.
Finally assume that $B$ is integral and that ${f:A \rightarrow B}$ is a morphism that sends every element in $F$ to $1$. We claim that $f$ sends congruent elements in $A$ to the same element in $B$. It is enough to check that ${x\mid_F y}$ implies that ${f x \leq f y}$; but if ${w x \leq y}$ for some ${w \in F}$ then ${f x = (f w) (f x) = f(w x) \leq f y}$.
\end{proof}

   In particular, for any ${a \in A}$, the subset ${F = \{ a^n \mid n \in \Nat\} \subseteq A}$ is a multiplicative submonoid. In this case the universal ${A \rightarrow A[F^{-1}]}$ will be denoted by ${A \rightarrow A[a^{-1}]}$.

\begin{lemma}\label{LemLocalization} If $A$ is integral and ${a\in A}$ is idempotent then ${A \rightarrow A[a^{-1}]}$ may be identified with the quotient of $A$ by the congruence that identifies $x$ and $y$ exactly when ${a x = a y}$.
\end{lemma}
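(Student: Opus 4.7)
The plan is to use Lemma~\ref{LemSubMultiplicationsCanBeCollapsed} directly: since that result already tells us that $A \to A/{\equiv_F}$ has the universal property of $A \to A[F^{-1}]$ for any multiplicative submonoid $F$, the task reduces to identifying the congruence $\equiv_F$ (for $F = \{a^n \mid n \in \Nat\}$) with the concrete relation $\sim$ defined by $x \sim y$ iff $a x = a y$.

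First I would observe that idempotence of $a$ forces $a^n = a$ for every $n \geq 1$, so effectively $F = \{1, a\}$. Using this I would simplify the pre-order $\mid_F$: by definition $x \mid_F y$ means $x \leq y$ or $a x \leq y$, but the first alternative implies the second, because $a x \leq x$ by Lemma~\ref{LemDividesImpliesAbove} and hence $a x \leq x \leq y$. So $x \mid_F y$ is equivalent to $a x \leq y$, and therefore $x \equiv_F y$ is equivalent to the conjunction ``$a x \leq y$ and $a y \leq x$''.

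Next I would check that this symmetric condition is the same as $a x = a y$. For the easier direction, if $a x = a y$ then $a x = a y \leq y$ and $a y = a x \leq x$, again by Lemma~\ref{LemDividesImpliesAbove}. For the converse, starting from $a x \leq y$ and $a y \leq x$ I would multiply through by $a$ and use $a^2 = a$ (monotonicity of multiplication was noted in the definition of the canonical pre-order) to obtain $a x \leq a y$ and $a y \leq a x$. Since $A$ is integral, $1 + 1 = 1$ holds, so the canonical pre-order is a genuine partial order on the join-semilattice $(A, +, 0)$ and antisymmetry gives $a x = a y$.

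I do not anticipate a serious obstacle here; the content is really just unwinding the definitions once the idempotence of $a$ has been used to collapse $F$ to $\{1, a\}$. The one place that requires care is ensuring that the move from $\mid_F$ to the cleaner statement ``$a x \leq y$'' uses integrality (both via Lemma~\ref{LemDividesImpliesAbove} and via the fact that the canonical pre-order is antisymmetric), so that the final equivalence with $a x = a y$ actually holds.
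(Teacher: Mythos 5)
Your proof is correct and follows essentially the same route as the paper: collapse $F$ to $\{1,a\}$ via idempotence, unwind $\mid_F$, and use antisymmetry of the canonical order (which holds by integrality) to identify $\equiv_F$ with $ax=ay$. The paper reaches the same endpoint via the slightly different intermediate equivalence ``$x\mid_F y$ iff $ax\leq ay$'' rather than your ``$x\mid_F y$ iff $ax\leq y$,'' but these are the same argument up to bookkeeping.
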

\begin{proof}
Consider the relation $\mid_F$ defined in Lemma~\ref{LemSubMultiplicationsCanBeCollapsed} for the particular case of the submonoid ${F = \{ a^n \mid n \in \Nat\} = \{1, a \} \subseteq A}$. Then ${x\mid_F y}$ if and only if ${x \leq y}$ or ${a x \leq y}$; and this holds if and only if ${a x \leq a y}$.
\end{proof}

   An element ${a}$ in a rig $A$ is called {\em nilpotent} if ${a^n = 0}$ for some ${n\in \Nat}$.

\begin{lemma}\label{LemTrivialLocalization}
For any integral rig $A$ and ${a \in A}$, ${0 = 1 \in A[a^{-1}]}$ if and only if $a$ is nilpotent.
\end{lemma}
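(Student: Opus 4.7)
The plan is to handle the two implications separately, using the explicit construction of $A[a^{-1}]$ from Lemma~\ref{LemSubMultiplicationsCanBeCollapsed} for the nontrivial direction.

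For the easy (``only if'') direction, suppose $a$ is nilpotent with $a^n = 0$. The universal morphism $\eta : A \to A[a^{-1}]$ makes $\eta(a)$ invertible. Since $A[a^{-1}]$ is again an integral rig (integrality is an equation, preserved by quotients), Lemma~\ref{LemIntegralImpliesInvertiblesAreTrivial} forces $\eta(a) = 1$. Therefore $0 = \eta(0) = \eta(a^n) = \eta(a)^n = 1$.

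For the harder (``if'') direction, apply Lemma~\ref{LemSubMultiplicationsCanBeCollapsed} with the multiplicative submonoid $F = \{a^n \mid n \in \Nat\}$ to identify $A[a^{-1}]$ with the quotient $A/{\equiv_F}$. The equality $0 = 1$ in $A[a^{-1}]$ thus amounts to $0 \equiv_F 1$, i.e.\ both $0 \mid_F 1$ and $1 \mid_F 0$. The first relation is automatic, since $1 \cdot 0 = 0 \leq 1$. The second says that there exists $n \in \Nat$ with $a^n = a^n \cdot 1 \leq 0$.

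It remains to pass from $a^n \leq 0$ to $a^n = 0$. Here I use that in any integral rig $1 + 1 = 1$, so addition is idempotent and, as noted immediately after the definition of the canonical pre-order, $\leq$ is in fact the partial order of the join-semilattice $(A, +, 0)$. Combined with the trivial inequality $0 \leq a^n$, antisymmetry yields $a^n = 0$, so $a$ is nilpotent. The only point that requires a little care is precisely this last step: one must invoke integrality a second time to upgrade the pre-order to a partial order before concluding; otherwise the ``$\leq 0$'' obtained from the quotient description would not by itself give an equality.
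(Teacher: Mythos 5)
Your proof is correct and follows essentially the same route as the paper's: the paper likewise unwinds ${0 = 1 \in A[a^{-1}]}$ via the description of the localization from Lemma~\ref{LemSubMultiplicationsCanBeCollapsed} to obtain ${a^n \leq 0}$ for some $n$. You are slightly more explicit than the paper in recording that the canonical pre-order of an integral rig is a partial order (since ${1+1=1}$), so that ${a^n \leq 0}$ together with ${0 \leq a^n}$ gives ${a^n = 0}$ by antisymmetry; the paper leaves this final upgrade implicit.
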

\begin{proof}
One direction is trivial. For the other assume that ${0 = 1 \in A[a^{-1}]}$. Then ${1 \mid_F 0}$ in $A$ where ${F = \{ a^n \mid n \in \Nat\} \subseteq A}$. That is, there exists some ${n\in \Nat}$ such that ${a^n \cdot 1 \leq 0}$.
\end{proof}

   For certain special submonoids the construction of Lemma~\ref{LemSubMultiplicationsCanBeCollapsed} can be simplified. In order to explain this we introduce some notation. For any rig $A$ and subset ${S \subseteq A}$, we write  ${\below{S} \subseteq A}$ for the subset ${\{ x \mid (\exists s\in S)(x\leq s)\}\subseteq A}$.
   If ${a \in A}$ then we write ${\below{a}}$ instead of $\below{\{a\}}$. For example, ${\below{0} \subseteq A}$ consists of the elements that have a negative.

   If $A$ is integral then ${\below{a} \subseteq A}$ is closed under addition and multiplication.

\begin{definition}\label{DefStronglyIdempotent} An element ${a \in A}$ is called {\em strongly idempotent} if ${a x = x}$ for every ${x\leq a}$.
\end{definition}

   For instance, Boolean elements are strongly idempotent and, of course, every strongly idempotent element is idempotent. Also, every element in a distributive lattice is strongly idempotent.

\begin{lemma}\label{LemInvertingStronglyIdempotents} If $A$ is integral and ${a\in A}$ is strongly idempotent then addition and multiplication in ${\below{a}}$ may be extended to the structure of a rig, the function ${A \rightarrow \below{a}}$ that sends ${x \in A}$ to ${a x \in \below{a}}$ is a rig morphism and has the universal property of ${A \rightarrow A[a^{-1}]}$.
\end{lemma}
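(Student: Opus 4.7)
The plan is to exhibit the rig structure on $\below{a}$ directly (with multiplicative unit $a$ instead of $1$), verify that $\phi(x) = ax$ is a rig morphism into it, and establish the universal property by restriction, with strong idempotence doing all the work at every step. The key observation to keep in mind is that the multiplicative unit of $\below{a}$ must be $a$ itself, since by strong idempotence $a$ acts as an identity on every element below it.

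First I would equip $\below{a}$ with a rig structure. Closure of $\below{a}$ under $+$ and $\cdot$ was noted just before Definition~\ref{DefStronglyIdempotent}, and $0 \in \below{a}$ since $0 \leq a$. For every $x \leq a$, strong idempotence gives $a x = x$, so $a$ is a two-sided multiplicative unit for the inherited multiplication. Associativity, commutativity, distributivity, and the equation $a\cdot 0 = 0$ are inherited from $A$. Integrality (with respect to the new unit $a$) follows from the fact that addition is idempotent in any integral rig: multiplying $1+1=1$ by $x$ gives $x+x = x$, so if $w + x = a$ then $a + x = w + x + x = w + x = a$.

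Next I would verify that $\phi:A \to \below{a}$ is a rig morphism. Additivity and $\phi(0) = 0$ are immediate from distributivity. Preservation of the multiplicative unit: $\phi(1) = a\cdot 1 = a$, the unit of $\below{a}$. Multiplicativity uses $a^2 = a$ (obtained by applying strong idempotence to $a \leq a$): $\phi(xy) = a x y$ and $\phi(x)\phi(y) = (ax)(ay) = a^2 xy = axy$. Note that $\phi(a) = a^2 = a$ is the unit of $\below{a}$, so by Lemma~\ref{LemIntegralImpliesInvertiblesAreTrivial} the morphism $\phi$ inverts $a$, and hence inverts every power of $a$ (in fact $\{a^n \mid n \in \Nat\} = \{1, a\}$ by idempotence).

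Finally, for the universal property, let $B$ be integral and let $f:A \to B$ be a rig morphism with $f(a) = 1$. Define $g:\below{a} \to B$ as the restriction of $f$. Then $g$ preserves sums and products because $f$ does and $\below{a}$ is closed under them in $A$; it preserves the unit since $g(a) = f(a) = 1$; and $g\phi(x) = f(ax) = f(a)f(x) = f(x)$. Uniqueness is the cleanest use of strong idempotence: for every $y \in \below{a}$, $\phi(y) = ay = y$, so any rig morphism $g$ with $g\phi = f$ is forced to satisfy $g(y) = g(\phi(y)) = f(y)$. There is no serious obstacle here; the whole lemma is an unpacking of strong idempotence, and the only real work is the routine axiom-checking in the first step.
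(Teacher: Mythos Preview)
Your proof is correct. The approach, however, differs from the paper's. You verify the universal property of $\phi:A\to\below{a}$ from scratch: given $f:A\to B$ inverting $a$, you exhibit the factorization $g=f|_{\below{a}}$ directly and use strong idempotence (via $\phi(y)=y$ for $y\leq a$, i.e.\ $\phi$ is a retraction) for uniqueness. The paper instead invokes the already-constructed localization $A\to A[a^{-1}]$: since $\phi$ sends $a$ to the unit of $\below{a}$, the universal property of $A[a^{-1}]$ yields a comparison map $A[a^{-1}]\to\below{a}$, and the paper then checks this is a bijection using the concrete description of $A[a^{-1}]$ from Lemma~\ref{LemLocalization} (the congruence $ax=ay$). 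Your route is more self-contained and avoids the detour through Lemma~\ref{LemLocalization}; the paper's route is shorter on the page because it leverages prior work, but it tacitly uses that $\below{a}$ is an integral rig (needed to apply the universal property in $\iRig$), a point you make explicit. Both arguments ultimately hinge on the same observation, namely that $\phi$ restricted to $\below{a}$ is the identity.
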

\begin{proof}
If $a$ is strongly idempotent then $a$ is the unit for the restricted multiplication in $\below{a}$ and the function ${A \rightarrow \below{a}}$ that sends ${x \in A}$ to ${a x \in \below{a}}$ clearly preserves addition and multiplication,  and it  sends $a$ to ${1 \in \below{a}}$, so there exists a unique morphism ${A[a^{-1}] \rightarrow \below{a}}$ such that the following diagram
$$\xymatrix{
A \ar[r] \ar[rd] & A[a^{-1}] \ar[d] \\
& \below{a}
}$$
commutes. We prove that ${A[a^{-1}] \rightarrow \below{a}}$ is bijective using the description of the domain given in Lemma~\ref{LemLocalization}. It is clearly injective. On the other hand, if ${x \leq a}$ then ${a x = x}$ because $a$ is strongly idempotent.
\end{proof}

   Let ${\iRig \rightarrow \Rig}$ be the full subcategory of integral rigs in $\Set$.
   As explained in \cite{Lawvere08}, the inclusion   ${\iRig \rightarrow \Rig}$ has a right adjoint and so it follows that $\iRig$ is coextensive. Since we have not proved that $\Rig$ is coextensive we give below a direct proof that $\iRig$ is. The result is not needed for our representation theorem  but it is an essential part of the conceptual path that leads to it.

\begin{proposition}\label{PropIRigIsCoextensive}
The category $\iRig$ is coextensive.
\end{proposition}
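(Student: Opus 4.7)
The plan is to invoke Proposition~\ref{PropCharExtensivity} applied to $\opCat{\iRig}$, so I need to verify that (a) $\opCat{\iRig}$ has finite limits, (b) coproducts in $\opCat{\iRig}$ are disjoint, and (c) coproducts in $\opCat{\iRig}$ are universal. Condition (a) is immediate since $\iRig$, being algebraic, has all finite colimits and finite products. The initial object of $\iRig$ is the $2$-element chain $\mathbf{2}=\{0,1\}$ (since in an integral rig the elements $0,1$ generate exactly this), so $1+1$ in $\opCat{\iRig}$ is the $4$-element Boolean algebra $\mathbf{2}\times\mathbf{2}$ in $\iRig$, and picking a morphism $\mathbf{2}\times\mathbf{2}\rightarrow A$ amounts to selecting a Boolean pair $(a,a')$ in $A$ with ${a+a'=1}$ and ${a a'=0}$.

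For condition (b), the dual statement is that for any two integral rigs $A$ and $B$, the pushout of the projections ${A\times B\rightarrow A}$ and ${A\times B\rightarrow B}$ is the trivial (terminal) integral rig. This reduces to observing that a cocone from this span into any $C$ must equalize the images of $(1,0)$, which are $1$ and $0$, forcing $C$ to be trivial.

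For condition (c), the crux: given a Boolean pair $(a,a')$ in $A$, I would show that the pushouts of ${\mathbf{2}\times\mathbf{2}\rightarrow A}$ along the two projections ${\mathbf{2}\times\mathbf{2}\rightarrow\mathbf{2}}$ (which collapse $a$ to $1$ or to $0$ respectively) exhibit $A$ as a product in $\iRig$. Since Boolean elements are strongly idempotent, Lemma~\ref{LemInvertingStronglyIdempotents} identifies these pushouts with the rigs $\below{a}$ and $\below{a'}$, equipped with the morphisms $x\mapsto a x$ and $x\mapsto a' x$. I would then prove that the induced morphism
$$A\longrightarrow \below{a}\times \below{a'}, \qquad x\longmapsto (a x,\, a' x)$$
is an isomorphism. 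Injectivity uses $x=(a+a')x = a x + a' x$. For surjectivity, given $(u,v)$ with $u\leq a$ and $v\leq a'$, I would take $x=u+v$; the fact that $u\leq a$ together with ${a a'=0}$ forces $a' u=0$ (writing $u+t=a$ and multiplying by $a'$, then using that integrality makes the canonical pre-order antisymmetric so $a' u + a' t = 0$ implies $a' u=0$), and symmetrically $a v=0$, whence $a x=u$ and $a' x=v$.

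The main obstacle I anticipate is the verification that the pushouts in $\iRig$ along ${\mathbf{2}\times\mathbf{2}\rightarrow\mathbf{2}}$ really are given by the "slice" rigs $\below{a}$ and $\below{a'}$; this is where Lemma~\ref{LemInvertingStronglyIdempotents} on inverting strongly idempotent elements does the essential work, together with the fact that collapsing $a'$ to $0$ is equivalent to collapsing $a$ to $1$, which follows from $a+a'=1$ combined with integrality (${a=1}$ together with $a+a'=1$ gives $1+a'=1$; conversely ${a'=0}$ gives $a=a+a'=1$). Everything else amounts to direct calculation with the order-theoretic manipulations afforded by Lemma~\ref{LemDividesImpliesAbove} and the integrality axiom ${1+x=1}$.
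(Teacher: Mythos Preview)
Your approach is essentially the paper's: both use the dual of Proposition~\ref{PropCharExtensivity}, the same disjointness argument via the element $(1,0)$, and the identification of the two pushouts with $\below{a}$ and $\below{a'}$ via Lemma~\ref{LemInvertingStronglyIdempotents}, followed by checking that $x\mapsto(ax,a'x)$ is a bijection. Your injectivity and surjectivity arguments are correct and in fact spell out a detail the paper leaves implicit (namely that $u\leq a$ forces $a'u=0$ via $a'u+a't=a'a=0$).

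There is, however, one genuine slip in your last paragraph. To identify the pushout along the first projection with $A[a^{-1}]$ you need to know that forcing $a=1$ automatically forces $a'=0$ (since the pushout forces both). You claim this follows from $a+a'=1$: ``$a=1$ together with $a+a'=1$ gives $1+a'=1$''. But $1+a'=1$ is the integrality axiom itself and says nothing about $a'$; it certainly does not yield $a'=0$. The correct argument uses the \emph{other} Boolean identity $aa'=0$: once $a=1$ in the quotient, $a'=1\cdot a'=a a'=0$. (Your converse direction, $a'=0\Rightarrow a=a+a'=1$, is fine and does use $a+a'=1$.) With this one-line fix the proof goes through exactly as the paper's does.
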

\begin{proof}
We use the dual of Proposition~\ref{PropCharExtensivity}. For any pair of integral rigs $A$, $B$, the pushout of the projections ${A \leftarrow A\times B \rightarrow B}$ is the terminal object because  the element ${(1, 0) \in A\times B}$ is sent to $1$ in $A$ and to $0$ in $B$ so ${1 = 0}$ must hold in the pushout.

   The initial distributive lattice $2$ is also initial in $\iRig$. So consider a map ${g:2\times 2 \rightarrow A}$ and assume the squares below
$$\xymatrix{
2 \ar[d] & \ar[l]_-{\pi_0} 2\times 2 \ar[d]^-g \ar[r]^-{\pi_1} & 2 \ar[d] \\
A_0 &  \ar[l] A \ar[r] & A_1
}$$
are pushouts. The element ${(1, 0)}$ in ${2\times 2}$ has complement ${(0, 1)}$ and so $g$ is determined by ${a_0 = g(1, 0)}$ and also by ${a_1 = g(0, 1)}$. The universal property of the pushouts implies that ${A \rightarrow A_0}$ coincides with ${A \rightarrow A[a_0^{-1}]}$ and that ${A \rightarrow A_1}$ coincides with ${A \rightarrow A[a_1^{-1}]}$.
   So, to complete the proof, it is enough to show that if $a$ is complemented in $A$ then the canonical ${A \rightarrow A[a^{-1}] \times A[b^{-1}] = \below{a}\times \below{b}}$ is an iso, where $b$ is the complement of $a$. Surjectivity is easy because if ${x\leq a}$ and ${y\leq b}$ then ${x+y \in A}$ is sent to ${(a x, b y) = (x, y) \in \below{a}\times \below{b}}$.
   To prove injectivity assume that ${u, v \in A}$ are such that ${(a u, b u) = (a v, b v)}$.
Then the following calculation
\[ u = (a +b)u = a u  + b u = a v + b v = (a + b) v = v \]
completes the proof.
\end{proof}

   We will need a further general fact about localizations.
	 Again, let $A$ be an integral rig and ${F \rightarrow A}$ be a multiplicative submonoid.
   Consider $F$ equipped with the partial order inherited from $A$. As a poset, $F$ is cofiltered because for every ${x, y \in F}$, ${x y \leq x}$ and ${x y \leq y}$ by integrality. The assignment that sends ${x\in F}$ to ${A[x^{-1}]}$ in $\iRig$ determines a functor ${\opCat{F} \rightarrow \iRig}$ because if ${x\leq y}$ in $F$ then there exists a unique map ${A[y^{-1}] \rightarrow A[x^{-1}]}$ such that the triangle on the left below
$$\xymatrix{
A \ar[r] \ar[rd] & A[y^{-1}] \ar[d] \ar[r] & A[F^{-1}] \\
 & A[x^{-1}] \ar[ru]
}$$
commutes. Also, the universal map ${A \rightarrow A[F^{-1}]}$ determines maps ${A[y^{-1}] \rightarrow A[F^{-1}]}$ and ${A[x^{-1}] \rightarrow A[F^{-1}]}$ and, moreover, the universal property of ${A \rightarrow A[y^{-1}]}$ implies that the triangle on the right above commutes. In other words, we have a cocone from the functor ${\opCat{F} \rightarrow \iRig}$ to ${A[F^{-1}]}$. We claim that it is colimiting. To prove this consider a cocone $g$ from the functor ${\opCat{F} \rightarrow \iRig}$ to an integral rig $B$. Then there exists a diagram as on the left below
$$\xymatrix{
A \ar[r] \ar[rd] & A[y^{-1}] \ar[d] \ar[r]^{g_y} & B &&
  A \ar[rd]_-{g'} \ar[r] & A[F^{-1}] \ar[d]^-{\overline{g}} \\
 & A[x^{-1}] \ar[ru]_-{g_x} & &&
  & B
}$$
which shows that we have a map ${g':A \rightarrow B}$ that inverts every element of $F$. Hence, there exists a unique ${\overline{g}:A[F] \rightarrow B}$ such that the diagram on the right above commutes. For each ${x \in F}$, the universal property of ${A \rightarrow A[x^{-1}]}$ implies that the following diagram
$$\xymatrix{
A[x^{-1}] \ar[rd]_-{g_x} \ar[r] & A[F^{-1}] \ar[d]^-{\overline{g}} \\
 & B
}$$
commutes. Moreover, this $\overline{g}$ is the unique one with this property, so the above claim is true.
   Alternatively, if we denote the colimit of the functor ${\opCat{F} \rightarrow \iRig}$ by ${\varinjlim_{x \in \opCat{F}} A[x^{-1}]}$ then the above argument implies the following.
	
\begin{lemma}\label{LemColomitFilters}
For any multiplicative submonoid ${F \rightarrow A}$ there exists a unique isomorphism ${A[F^{-1}] \rightarrow \varinjlim_{x \in \opCat{F}} A[x^{-1}]}$ such that the following diagram
$$\xymatrix{
A \ar[d] \ar[r] & A[F^{-1}] \ar[d] \\
A[x^{-1}] \ar[r] &  \varinjlim_{x \in \opCat{F}} A[x^{-1}]
}$$
commutes for every ${x \in F}$.
\end{lemma}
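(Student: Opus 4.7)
The plan is to observe that the discussion immediately preceding the statement has already established the substantive content: it exhibits $A[F^{-1}]$, together with the family of canonical morphisms $A[x^{-1}] \to A[F^{-1}]$ indexed by $x \in F$, as a colimiting cocone for the functor $\opCat{F} \to \iRig$ sending $x \mapsto A[x^{-1}]$. Once that recognition is made, the lemma is a purely formal consequence of the uniqueness of colimits up to canonical isomorphism.

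Concretely, I would produce the isomorphism by invoking universal properties in both directions. Fix a choice of $\varinjlim_{x \in \opCat{F}} A[x^{-1}]$ with defining cocone $\iota_x \colon A[x^{-1}] \to \varinjlim_{x \in \opCat{F}} A[x^{-1}]$. The composites $A \to A[x^{-1}] \xrightarrow{\iota_x} \varinjlim_{x \in \opCat{F}} A[x^{-1}]$ are independent of $x \in F$: given $x, y \in F$, the element $xy \in F$ satisfies $xy \leq x$ and $xy \leq y$ by integrality (Lemma~\ref{LemDividesImpliesAbove}), so the cocone condition forces the composites at $x$ and at $y$ both to agree with the one at $xy$. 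This common map inverts every element of $F$, since each $x \in F$ is already invertible in $A[x^{-1}]$, so the universal property of $A \to A[F^{-1}]$ produces a unique morphism $A[F^{-1}] \to \varinjlim_{x \in \opCat{F}} A[x^{-1}]$ through which it factors. By construction, this morphism makes the displayed square commute for every $x \in F$, and the uniqueness clause of the statement follows from the uniqueness clause of the universal property of $A \to A[F^{-1}]$.

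In the other direction, applying the universal property of the colimit to the cocone exhibited in the preceding paragraph (whose components at $x$ are the canonical maps $A[x^{-1}] \to A[F^{-1}]$) yields a unique morphism $\varinjlim_{x \in \opCat{F}} A[x^{-1}] \to A[F^{-1}]$ compatible with both cocones. That the two constructed maps are mutually inverse is a standard double invocation of uniqueness: each composite is a self-map of the relevant universal object compatible with its defining structure, a property also enjoyed by the identity. I anticipate no real obstacle here, since all the substantive work is carried out in the paragraph above the statement; this lemma merely repackages the conclusion of that analysis as an explicit natural isomorphism.
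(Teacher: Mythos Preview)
Your proposal is correct and matches the paper's approach exactly: the paper also establishes, in the discussion immediately preceding the lemma, that the cocone $A[x^{-1}] \to A[F^{-1}]$ is colimiting, and then states the lemma as the formal repackaging of that fact via uniqueness of colimits. Your write-up simply makes the ``uniqueness of colimits up to canonical iso'' step more explicit than the paper does, but the substance is identical.
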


\section{Chinese remainder and reticulation}
\label{SecChineseReticulation}

   In this section we prove a generalization of the Dubuc-Poveda `pushout-pullback' lemma \cite{DubucPoveda2010} and give an explicit simple description of the left adjoint to the inclusion ${\dLat \rightarrow \iRig}$ of the category of distributive lattices into that of integral rigs. These results play an important role in the proof of our representation result.

\begin{lemma}\label{LemIntegralityBasic}
If $A$ is an integral rig then,   ${(x + y)^m  \leq x^m + x y +  y^m}$ for any ${x, y \in A}$ and  any ${m \in \Nat}$.
Therefore, for any ${m, n \in \Nat}$, ${(x + y)^{m n} \leq x^m + y^n}$.
\end{lemma}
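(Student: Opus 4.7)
The plan is to exploit the fact that in any integral rig addition is idempotent: from $1 + 1 = 1$ we deduce $a + a = a \cdot (1 + 1) = a$ for every $a$. Consequently, the ordinary binomial expansion of $(x + y)^m$ in a commutative rig collapses, because every positive integer ``coefficient'' acts as the identity on any element. By a short induction on $m$ using distributivity one therefore obtains
\[ (x + y)^m = \sum_{k=0}^{m} x^k y^{m-k}, \]
with each monomial appearing exactly once.

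For the first inequality, I split off the endpoint terms $x^m$ and $y^m$ and deal with the middle terms $x^k y^{m-k}$ for $1 \leq k \leq m-1$ uniformly: each such term is divisible by $xy$ (since $x^k y^{m-k} = xy \cdot x^{k-1} y^{m-k-1}$), so Lemma~\ref{LemDividesImpliesAbove} gives $x^k y^{m-k} \leq xy$. Monotonicity of addition together with idempotency then bounds the sum of all middle terms by $xy$ itself, yielding $(x+y)^m \leq x^m + xy + y^m$.

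For the second inequality I would expand $(x+y)^{mn} = \sum_{k=0}^{mn} x^k y^{mn-k}$ in the same way and show each summand is $\leq x^m + y^n$ by a short case split on $k$. If $k \geq m$ then $x^m$ divides $x^k y^{mn-k}$, so by Lemma~\ref{LemDividesImpliesAbove} the summand is $\leq x^m$. If $k < m$ then $mn - k \geq mn - (m-1) = m(n-1) + 1 \geq n$ (using $(m-1)(n-1) \geq 0$ for $m,n \geq 1$), so $y^n$ divides the summand and it is $\leq y^n$. Either way the summand lies below $x^m + y^n$, and summing over $k$ with idempotency of addition concludes the bound. The degenerate cases $m = 0$ or $n = 0$ reduce to $1 \leq 1$ by integrality.

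The only real subtlety is justifying carefully the collapsed binomial expansion under idempotent addition; once that identity is in hand, everything reduces to applications of Lemma~\ref{LemDividesImpliesAbove} together with monotonicity of the rig operations with respect to the canonical pre-order.
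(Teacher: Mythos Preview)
Your argument is correct. The first inequality is handled essentially as in the paper: both of you use the collapsed binomial expansion $(x+y)^m = \sum_{k=0}^m x^k y^{m-k}$ under idempotent addition, then observe that each middle monomial is a multiple of $xy$ and hence $\leq xy$ by Lemma~\ref{LemDividesImpliesAbove}.

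For the second inequality your route differs from the paper's. The paper first derives the intermediate bound $(x+y)^m \leq x + y^m$ from the first inequality (factoring $x^m + xy + y^m = x(x^{m-1}+y) + y^m \leq x + y^m$), and then iterates: $(x+y)^{mn} = ((x+y)^m)^n \leq (x^m + y)^n \leq x^m + y^n$. You instead expand $(x+y)^{mn}$ directly and bound each monomial $x^k y^{mn-k}$ by a case split on whether $k \geq m$, using the arithmetic observation $(m-1)(n-1)\geq 0$ to ensure $mn-k \geq n$ when $k < m$. Your approach is more direct and avoids the intermediate claim, at the cost of a small combinatorial check; the paper's approach is more structured and reuses the first part, making the ``therefore'' in the statement literal. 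Both are equally valid.
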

\begin{proof}
The result is clear for ${m = 0}$ and ${m = 1}$, so let ${m \geq 2}$.
Since addition is idempotent,
\[ (x + y)^m = \sum_{i = 0}^m \binom{m}{i} x^{m - i} y^i = \sum_{i = 0}^m  x^{m- i} y^i \]
and since ${m\geq 2}$ and $A$ is integral,
\[ (x + y)^m = x^m + \left(\sum_{i =1}^{m -1} x^{m- i} y^i  \right) + y^m  =
x^m + x y \left(\sum_{i =1}^{m -1} x^{m- i-1} y^{i-1}  \right) + y^m
\leq x^m + x y + y^m \]
so the first part of the result holds. We now claim that ${(x + y)^m \leq x + y^m}$ for every ${m \in \Nat}$. It certainly holds for ${m = 0}$. Now, if ${m \geq 1}$ then
\[ (x + y)^m \leq x^m + x y + y^m = x(x^{m - 1} + y) + y^m \leq x + y^m \]
holds. Commutativity implies that  ${(x + y)^m \leq x^m + y}$ for every ${m\in \Nat}$.
Finally,
\[ (x + y)^{m n} = ((x + y)^m)^n \leq (x^m + y)^{n} \leq x^m + y^n \]
completes the proof.
\end{proof}


      Let $A$ be an integral rig and let ${a, b \in A}$. If ${f:A \rightarrow R}$ in $\iRig$ is such that ${f a = 1}$ then ${f(a + b) =  1 + f b = 1}$. So there exists a unique map ${A[(a + b)^{-1}] \rightarrow R}$ such that the composite ${A \rightarrow A [(a + b)^{-1}] \rightarrow R}$ equals $f$. Similarly, if $f$ inverts $b$.
   On the other hand, if ${f:A \rightarrow R}$  inverts ${a b}$ then there is a canonical cospan ${A[a^{-1}] \rightarrow R \leftarrow A[b^{-1}]}$.

\begin{lemma}[The pushout-pullback lemma]\label{LemPBPO}
If $A$ is integral then for every ${a, b \in A}$ the canonical maps make the following diagram commute
$$\xymatrix{
A[(a + b)^{-1}] \ar[d] \ar[r] & A[b^{-1}] \ar[d] \\
A[a^{-1}] \ar[r] & A[(a b)^{-1}]
}$$
and, moreover, the square is both a pullback and a pushout.
\end{lemma}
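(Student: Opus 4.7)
The plan is to handle commutativity and the pushout property formally, then concentrate on the pullback, where the real content lies. Integrality makes all four maps in the square exist: if $a$ becomes $1$ then $a+b = 1+b = 1$ is invertible, so $A \to A[a^{-1}]$ factors through $A \to A[(a+b)^{-1}]$, and symmetrically for $b$; and if $ab$ is invertible in some target then so are both $a$ and $b$ (any factor of an invertible element in a commutative monoid is invertible), yielding $A[a^{-1}] \to A[(ab)^{-1}]$ and likewise for $b$. Commutativity of the square is then a consequence of the universal property of $A \to A[(a+b)^{-1}]$. For the pushout, given $f : A[a^{-1}] \to R$ and $g : A[b^{-1}] \to R$ that agree on $A[(a+b)^{-1}]$, the two composites $A \to R$ both invert $a+b$ and hence factor through $A \to A[(a+b)^{-1}]$; agreement forces them to coincide, and this common map inverts $ab$ and therefore factors uniquely through $A \to A[(ab)^{-1}]$.

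For the pullback I will use Lemma~\ref{LemSubMultiplicationsCanBeCollapsed}: in an integral rig, $[x] = [y]$ in $A[t^{-1}]$ if and only if there is $n$ with $t^n x \leq y$ and $t^n y \leq x$, since two potentially different witnesses $w, w'$ can be merged into their product $ww'$ (which still lies below $1$ by integrality). Injectivity of the canonical map $A[(a+b)^{-1}] \to A[a^{-1}] \times_{A[(ab)^{-1}]} A[b^{-1}]$ is then direct: if $z_1, z_2 \in A$ satisfy $a^M z_1 \leq z_2$, $a^M z_2 \leq z_1$, $b^M z_1 \leq z_2$, and $b^M z_2 \leq z_1$, then Lemma~\ref{LemIntegralityBasic} gives $(a+b)^{M^2} \leq a^M + b^M$, so $(a+b)^{M^2} z_1 \leq a^M z_1 + b^M z_1 \leq z_2 + z_2 = z_2$ and symmetrically; hence $[z_1] = [z_2]$ in $A[(a+b)^{-1}]$.

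For surjectivity, suppose $[x] \in A[a^{-1}]$ and $[y] \in A[b^{-1}]$ have equal images in $A[(ab)^{-1}]$, witnessed by $(ab)^N x \leq y$ and $(ab)^N y \leq x$. I propose the ``partition of unity'' lift $z = a^N x + b^N y \in A$. Then $a^N z = a^{2N} x + a^N b^N y \leq x + x = x$ (using $a^{2N} x \leq x$ by integrality and $a^N b^N y = (ab)^N y \leq x$ by hypothesis), while $a^N x \leq z$ is immediate, giving $[z] = [x]$ in $A[a^{-1}]$; symmetrically $[z] = [y]$ in $A[b^{-1}]$. The main obstacle is precisely this choice of $z$: guessing the additive partition $a^N x + b^N y$, in which $a^N$ and $b^N$ play the role of bump functions concentrated on the two basic open sets. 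Once the formula is written down, integrality and Lemma~\ref{LemIntegralityBasic} handle the rest.
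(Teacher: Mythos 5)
Your proposal is correct and takes essentially the same route as the paper: commutativity and the pushout follow from universal properties of localizations, injectivity of the comparison map uses Lemma~\ref{LemIntegralityBasic} to bound $(a+b)^{kl}u$, and surjectivity hinges on the same ``partition of unity'' witness $a^m x + b^m y$ with the same verification that it restricts correctly over $a$ and over $b$.
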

\begin{proof}
The fact that the square commutes may be checked by pre-composing with the universal ${A \rightarrow A[(a + b)^{-1}]}$. The fact that the square is a pushout follows because the cospan ${A[a^{-1}] \rightarrow A[(a b)^{-1}]\leftarrow A[b^{-1}]}$ is actually a coproduct.

To prove that the diagram is a pullback it seems convenient to introduce some notation.
For any ${x\in A}$, we write ${(x\bmod a)}$ for the associated element in ${A[a^{-1}]}$; and similarly for $b$, ${a+b}$ and ${a b}$. For example, the left vertical map of the diagram in the statement sends ${(z \bmod (a+b))}$ to ${(z\bmod a)}$.
    Consider the actual pullback ${A[a^{-1}] \leftarrow P \rightarrow A[b^{-1}]}$ and the induced canonical morphism ${A[(a + b)^{-1}] \rightarrow P}$. We need to show that this map is bijective.

    To prove that ${A[(a + b)^{-1}] \rightarrow P}$ is surjective let ${(x\bmod a)}$ and ${(y\bmod b)}$ be such that ${(x \bmod a b) = (y \bmod a b)}$. This means that that there exists an ${m\in\Nat}$ such that  ${(a b)^m x \leq y}$ and ${(a b)^m y \leq x}$.
   Consider now ${((a^m x + b^m y) \bmod (a + b))}$. Clearly, ${a^m x \leq a^m x + b^m y}$ and, on the other hand,  calculate
\[ a^m(a^m x + b^m y) = a^{2 m} x + (a b)^m y \leq  a^{2 m} x +  x = (a^{2 m} + 1) x = x \]
to conclude  that ${((a^m x + b^m y) \bmod a) = (x \bmod a)}$. An analogous calculation shows that ${((a^m x + b^m y) \bmod b) = (y \bmod b)}$.

  To prove that ${A[(a + b)^{-1}] \rightarrow P}$ is injective it is enough to show that the canonical ${A[(a + b)^{-1}] \rightarrow A[a^{-1}] \times A[b^{-1}]}$ is injective. So let ${(u \bmod (a + b))}$ and ${(v \bmod (a + b))}$ be such that they are sent to the same pair in ${A[a^{-1}] \times A[b^{-1}]}$. That is: ${(u \bmod a) = (v \bmod a)}$ and ${(u \bmod b) = (v \bmod b)}$.  In turn, this means that there are ${k, l \in \Nat}$ such that
\[ a^k u \leq v \quad a^k v \leq u \quad\quad  b^l u \leq v \quad b^l v \leq u \]
so, using Lemma~\ref{LemIntegralityBasic}, we can calculate
\[ (a + b)^{k l} u \leq (a^k + b^l) u \leq a^k u + b^l u \leq v + v \leq v \]
and, similarly, ${(a + b)^{k l} v \leq u}$. Therefore, ${(u \bmod (a + b)) = (v \bmod (a + b))}$.
\end{proof}

    Fix an integral rig $A$ and define an auxiliary pre-order $\preceq$ on $A$ by declaring that ${x \preceq y}$ holds if and only if there exists an ${m\in \Nat}$ such that ${x^m \leq y}$. Since multiplication is monotone with respect to $\leq$, $\preceq$ is indeed a pre-order. This pre-order determines, as usual, an equivalence relation $\sim$ on $A$. That is, ${x\sim y}$ if and only if both ${x \preceq y}$ and ${y\preceq x}$.

\begin{lemma}\label{LemLattification}
If $A$ is an integral rig the relation $\sim$ is a rig congruence and the quotient ${\eta_A:A \rightarrow A/{\sim}}$ is universal from $A$ to the inclusion ${\dLat \rightarrow \iRig}$. Moreover, the map ${\eta_A:A \rightarrow A/{\sim}}$ is local.
\end{lemma}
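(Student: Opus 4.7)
The plan is to verify, in order: (i) $\sim$ is a rig congruence; (ii) the quotient $A/{\sim}$ is a distributive lattice; (iii) $\eta_A$ has the required universal property; and (iv) $\eta_A$ is local.

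For (i), I would first check that the pre-order $\preceq$ is compatible with both operations. For multiplication: if $x^m\leq x'$ and $y^n\leq y'$ then, using that in an integral rig any element lies below $1$ (so $z^{k+1}\leq z^k$), I would compute $(xy)^{mn}=x^{mn}y^{mn}\leq x^m y^n\leq x' y'$, giving $xy\preceq x'y'$. For addition, this is exactly where Lemma~\ref{LemIntegralityBasic} is essential: under the same hypotheses, $(x+y)^{mn}\leq x^m+y^n\leq x'+y'$, so $x+y\preceq x'+y'$. Symmetric reasoning yields that the induced equivalence $\sim$ is a congruence. I expect this step with addition to be the only nontrivial part — the whole reason for introducing Lemma~\ref{LemIntegralityBasic} in the preceding section.

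For (ii), note that $x\cdot x\leq x$ by Lemma~\ref{LemDividesImpliesAbove}, while $x\leq x$ trivially gives $x\preceq x^2$ (with $m=2$); hence $x^2\sim x$. Combined with integrality (which passes to the quotient), the equation $x^2=x$ makes $A/{\sim}$ a distributive lattice. For (iii), let $f:A\to D$ be a morphism in $\iRig$ with $D$ a distributive lattice. Then $f(x^m)=f(x)^m=f(x)$ for all $m\geq 1$, so $x^m\leq y$ implies $f(x)=f(x^m)\leq f(y)$; hence $x\preceq y$ implies $f(x)\leq f(y)$, and $x\sim y$ implies $f(x)=f(y)$. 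This yields a unique factorization $\bar f:A/{\sim}\to D$ through $\eta_A$, establishing the universal property of the reflection.

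For (iv), by Lemma~\ref{LemIntegralImpliesInvertiblesAreTrivial} applied to both $A$ and $A/{\sim}$ we have $\Inv(A)\cong 1\cong \Inv(A/{\sim})$, so the locality square reduces to checking that the fibre of $1\in A/{\sim}$ along $\eta_A$ is precisely $1\in A$. Concretely, $\eta_A(x)=1$ means $x\sim 1$, which unpacks to $1\preceq x$, i.e.\ $1=1^m\leq x$ for some $m$; combined with the automatic $x\leq 1$ from integrality this forces $x=1$. Hence the required square is a pullback, completing the proof.
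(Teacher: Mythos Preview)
Your proof is correct and follows essentially the same route as the paper's. The only cosmetic differences are that the paper uses the exponent $\max(m,n)$ rather than $mn$ for the multiplication case (both work, since powers decrease in an integral rig), and the paper's locality argument is phrased more tersely without explicitly invoking Lemma~\ref{LemIntegralImpliesInvertiblesAreTrivial}, though the content is identical.
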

\begin{proof}
To prove that $\sim$ is a congruence it is enough to show that addition and multiplication are compatible with $\preceq$.
Assume that ${u\preceq v}$ and ${x\preceq y}$, so that there are ${m , n\in \Nat}$ such that ${u^m \leq v}$ and ${x^n \leq y}$. Lemma~\ref{LemIntegralityBasic} implies that ${(u + x)^{m n} \leq u^m + x^n \leq v + y}$, so ${u + x  \preceq v + y}$.
On the other hand, ${(u x)^{\max(m, n)} = u^{\max(m, n)} x^{\max(m, n)} \leq u^m x^n \leq v y }$, so ${u x  \preceq v y}$. Therefore, $\sim$ is indeed a congruence.

   We now prove that $A/{\sim}$ is a lattice. Clearly, the surjection ${\eta = \eta_A: A \rightarrow A/{\sim}}$ implies that ${1 + u = 1}$ for every ${u\in A/{\sim}}$. To prove that multiplication is idempotent let ${x \in A}$ and observe that ${(\eta x)(\eta x) = \eta(x^2)}$.
But ${x^2 \sim x}$ so ${(\eta x)^2 = \eta(x^2) = \eta x}$.

   To prove the universal property let ${f:A \rightarrow D}$ be a rig morphism with $D$ a distributive lattice.
   If ${x\preceq y}$ then there is an ${m\in \Nat}$ such that ${x^m \leq y}$ and, since multiplication is idempotent in $D$, ${f x = f(x^m) \leq f y}$. Hence, ${x \sim y}$ implies ${f x = f y}$.

   To prove ${\eta_A:A \rightarrow A/{\sim}}$ is local assume that ${\eta x = \eta 1}$.
   Then ${1 \preceq x}$ and so ${1 \leq x}$.
\end{proof}

   Let us denote the resulting left adjoint by ${L:\iRig \rightarrow \dLat}$ and the associated unit by ${\eta_A = \eta:A \rightarrow L A}$. This unit and its codomain ${L A}$ may be referred to as the {\em reticulation} of the rig $A$.

   The next result shows that reticulations interact well with localizations.

\begin{lemma}\label{LemOverlineAndReticulation}
For any integral rig $A$ and ${x \in A}$  both squares below
$$\xymatrix{
A \ar[d] \ar[r]^-{\eta}  & L A \ar[d]           && A \ar[d] \ar[r]^-{\eta} & L A \ar[d] \\
A[x^{-1}] \ar[r] & (L A)[(\eta x)^{-1}] && A[x^{-1}] \ar[r]_-{\eta} & L(A[x^{-1}])
}$$
are pushouts in ${\iRig}$ where the bottom map on the left above is the unique rig map that makes the square commute; and the right square is the obvious naturality square.
\end{lemma}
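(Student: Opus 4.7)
The plan is to first show the left square is a pushout by direct application of universal properties, and then to derive the right square from it via a canonical isomorphism $(LA)[(\eta x)^{-1}] \cong L(A[x^{-1}])$ that identifies the two squares.

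For the left square, given rig morphisms $f : LA \to R$ and $g : A[x^{-1}] \to R$ in $\iRig$ whose precompositions with $\eta_A$ and the localization map $A \to A[x^{-1}]$ respectively agree, the element $f(\eta x) \in R$ equals the image of $x$ under $g$ composed with the localization, and is therefore invertible in $R$ (as $x$ is invertible in $A[x^{-1}]$). Hence $f$ extends uniquely to a rig map $h : (LA)[(\eta x)^{-1}] \to R$. To verify that $h$ composed with the bottom map of the left square also recovers $g$, one observes that both morphisms $A[x^{-1}] \to R$ in question have the same precomposition with $A \to A[x^{-1}]$; the universal property of this localization then forces them to be equal. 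Uniqueness of $h$ descends from the uniqueness in the step extending $f$ across the localization of $LA$.

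For the right square, observe that $LA$ is a distributive lattice, so every element is strongly idempotent, and Lemma~\ref{LemInvertingStronglyIdempotents} identifies $(LA)[(\eta x)^{-1}]$ with the sublattice $\below{\eta x} \subseteq LA$; in particular, $(LA)[(\eta x)^{-1}]$ itself lies in $\dLat$. Naturality of $\eta$ applied to the localization $A \to A[x^{-1}]$ yields a cocone $(L(\mathrm{loc}),\, \eta_{A[x^{-1}]})$ under the span defining the pushout, and the universal property of the left square produces a canonical comparison map $\phi : (LA)[(\eta x)^{-1}] \to L(A[x^{-1}])$. An inverse $\psi : L(A[x^{-1}]) \to (LA)[(\eta x)^{-1}]$ is obtained by applying the universal property of $\eta_{A[x^{-1}]}$ to the bottom map $A[x^{-1}] \to (LA)[(\eta x)^{-1}]$ of the left square, which is legitimate because $(LA)[(\eta x)^{-1}]$ belongs to $\dLat$. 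Uniqueness arguments in both universal properties show that $\phi$ and $\psi$ are mutually inverse, and transporting the left pushout across this isomorphism exhibits the right square as a pushout.

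The step most likely to require care is the verification that $\phi$ and $\psi$ are mutually inverse, as this simultaneously juggles the pushout universal property of the left square, the universal property of the reticulation unit $\eta_{A[x^{-1}]}$, and the naturality of $\eta$. Still, because every comparison in sight is determined by its precomposition with $A$, each check reduces to a straightforward diagram chase using the uniqueness clauses of the relevant universal properties.
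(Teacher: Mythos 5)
Your proof is correct and follows essentially the same strategy as the paper: establish the left square as a pushout via the universal property of the localization of $LA$ together with epiness of $A \to A[x^{-1}]$, then reduce the right square to the left one by identifying $(LA)[(\eta x)^{-1}]$ with $L(A[x^{-1}])$. The only cosmetic difference is that you construct the comparison isomorphism and its inverse explicitly and check them against each other, whereas the paper verifies directly that the bottom map $A[x^{-1}] \to (LA)[(\eta x)^{-1}]$ satisfies the universal property of $\eta_{A[x^{-1}]}$ — the same content, slightly repackaged.
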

\begin{proof}
The fact that the left square is a pushout follows from the universal property of the localization ${L A \rightarrow (L A)[(\eta x)^{-1}]}$ and the fact that ${A \rightarrow A[x^{-1}]}$ is epi. To complete the proof it is enough to show that the map ${A[x^{-1}] \rightarrow (L A)[(\eta x)^{-1}]}$ is universal from ${A[x^{-1}]}$ to distributive lattices.
So let $D$ be a distributive lattice and  ${f:A[x^{-1}] \rightarrow D}$ be a morphism of rigs.
By the universal property of ${\eta:A \rightarrow L A}$ there is a unique ${f_1:L A \rightarrow D}$ such that the square on the left below
$$\xymatrix{
A \ar[d] \ar[r]^-{\eta} & L A \ar[d]^-{f_1} &&
   A[x^{-1}] \ar[rd]_-{f} \ar[r] & (L A)[(\eta x)^{-1}] \ar[d]_-{f_2}^{f_3} & \ar[ld]^-{f_1} \ar[l] L A \\
A[x^{-1}] \ar[r]_-f     & D                 &&
                                    & D
}$$
commutes. By the pushout property,  there exists a unique map ${f_2:(L A)[(\eta x)^{-1}] \rightarrow D}$ such that the two triangles on the right above commute. In particular, the left triangle commutes. Finally, if there exists an ${f_3:(L A)[(\eta x)^{-1}] \rightarrow D}$ such that the left triangle above commutes then the right triangle also commutes because ${\eta:A \rightarrow L A}$ is epi. Hence, ${f_2 = f_3}$.
\end{proof}

\section{Reticulation in Grothendieck toposes}
\label{SecReticInGroth}

    Let $\calE$ be a topos and let ${\iRig(\calE)}$ be the category of intregral rigs in $\calE$. We can consider the full subcategory ${\dLat(\calE) \rightarrow \iRig(\calE)}$ and wonder about the existence of a left adjoint. If the left adjoint exists then we say that $\calE$ {\em has reticulations (of integral rigs)}. In Lemma~\ref{LemLattification} we proved that $\Set$ has reticulations. It should be possible to internalize this result to an elementary $\calE$ with a natural numbers object (A2.5.1 in \cite{elephant}). Also, it is tempting to conjecture that the existence of the left adjoint ${\iRig(\calE) \rightarrow \dLat(\calE)}$ implies that the unit of the adjunction is epi and local. Since we have not been able to prove this conjecture we lift the adjunction (and the properties of its unit)  from $\Set$ to an arbitrary Grothendieck topos.
    It is convenient to show first that inverse images of geometric morphisms preserve reticulations.

\begin{lemma}\label{LemUpperStarsAppliedToReticulations}
Let ${F:\calF\rightarrow \calE}$ be a geometric morphism. If $A$ is an integral rig in $\calE$ and ${\eta:A\rightarrow L A}$ is its reticulation then ${F^* \eta:F^* A \rightarrow F^* (L A)}$ is the reticulation (in $\calF$) of the integral rig ${F^* A}$. Moreover, if $\eta$ is epi and local then so is ${F^* \eta}$.
\end{lemma}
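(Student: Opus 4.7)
The plan is to transfer the universal property of $\eta$ from $\calE$ to $\calF$ via the adjunction ${F^* \dashv F_*}$, and then to verify the two extra properties of ${F^* \eta}$ by standard preservation arguments. The key observation is that ${F^*}$ preserves finite limits (and all colimits), while ${F_*}$ preserves all limits. Since integral rigs and distributive lattices are defined by finite-product algebraic theories, both ${F^*}$ and ${F_*}$ lift to functors between the corresponding categories of algebras: ${F^*}$ from $\calE$ to $\calF$, and ${F_*}$ from $\calF$ to $\calE$; in particular ${F^*(L A)}$ is a distributive lattice in $\calF$, and for any distributive lattice $D$ in $\calF$ the object ${F_* D}$ is a distributive lattice in $\calE$.

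To establish the universal property of ${F^* \eta : F^* A \to F^*(L A)}$, I would take an arbitrary distributive lattice $D$ in $\calF$ together with a morphism ${f : F^* A \to D}$ of integral rigs. Transposing $f$ across the adjunction yields a morphism ${f^\flat : A \to F_* D}$ in ${\iRig(\calE)}$; by the universal property of the reticulation ${\eta : A \to L A}$ there is a unique morphism ${g : L A \to F_* D}$ with ${g \circ \eta = f^\flat}$. Transposing back produces ${\overline{f} := g^\sharp : F^*(L A) \to D}$, and naturality of the adjunction forces ${\overline{f} \circ F^* \eta = f}$. Uniqueness of ${\overline{f}}$ follows from that of $g$ together with the bijectivity of the transpose. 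Thus ${F^* \eta}$ is the reticulation of ${F^* A}$.

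For the moreover clause, suppose $\eta$ is epi and local. Being a left adjoint, ${F^*}$ preserves all colimits, and in particular coequalizers; since every epimorphism in a topos is the coequalizer of its kernel pair and ${F^*}$ also preserves pullbacks, it preserves epimorphisms, so ${F^* \eta}$ is epi in $\calF$. For locality, ${\Inv(A)}$ is defined as a pullback built from the multiplication ${A \times A \to A}$ and the unit ${1 \to A}$; since ${F^*}$ preserves finite limits and the algebraic operations, the canonical comparison ${F^*(\Inv A) \to \Inv(F^* A)}$ is an isomorphism, and similarly for $L A$. The locality of $\eta$ says that the square of Definition~\ref{DefLocalMap} is a pullback in $\calE$; applying ${F^*}$ preserves this pullback and yields locality of ${F^* \eta}$.

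The main obstacle is not computational but organizational: one must confirm that the adjunction ${F^* \dashv F_*}$ genuinely descends to the level of integral-rig and distributive-lattice morphisms, so that the transposition step used in the universal-property argument makes sense inside $\iRig(\calE)$ and $\iRig(\calF)$. This in turn hinges on the fact that ${F_*}$ preserves finite products and hence sends algebras to algebras and algebra morphisms to algebra morphisms, which is the single essential use of the geometric-morphism hypothesis beyond the finite-limit preservation of $F^*$.
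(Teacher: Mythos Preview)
Your proposal is correct and follows essentially the same route as the paper: transpose a rig map ${f:F^* A \to D}$ across ${F^* \dashv F_*}$ to land in ${\iRig(\calE)}$, invoke the universal property of $\eta$ there, and transpose back; the paper spells out the transpositions explicitly via the unit $\alpha$ and counit $\beta$, while you use the abstract $\flat/\sharp$ notation, but the argument is the same. For the ``moreover'' clause the paper simply appeals to ${F^*}$ being a finite-limit-preserving left adjoint, which is exactly what your more detailed epi-and-pullback argument unpacks.
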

\begin{proof}
Denote the unit of ${F^* \dashv F_*}$ by ${\alpha:id_{\calF} \rightarrow F_* F^*}$. Since $\alpha$ is natural and $F^*$ preserves products, ${\alpha_A:A \rightarrow F_* (F^* A)}$ is a rig map. If $D$ is a distributive lattice in $\calF$ and ${f:F^* A \rightarrow D}$ is a rig map then the transposition ${(F_* f) \alpha:A \rightarrow F_* D}$ is a composite of rig maps with codomain ${F_* D}$ in ${\dLat(\calE)}$. Hence, there exists a unique rig morphism ${f':L A \rightarrow F_* D}$ in $\calE$ such that the diagram on the left below
$$\xymatrix{
A  \ar[d]_-{\alpha} \ar[r]^-{\eta} & L A  \ar[d]^-{f'} &&
   \ar@/_3.5pc/[dd]_-{id} F^* A \ar[d]^-{F^* \alpha} \ar[r]^-{F^* \eta} & F^* (L A) \ar[d]^-{F^* f'} &&
     F^* A \ar[rd]_-{f} \ar[r]^-{F^* \eta} & F^* (L A) \ar[d]^-{g} \\
F_* (F^* A) \ar[r]_-{F_* f} & F_* D &&
   F^* (F_* (F^* A)) \ar[d]^-{\beta} \ar[r]_-{F^* (F_* f)} & F^* (F_* D) \ar[d]^-{\beta} && & D \\
 & && F^* A \ar[r]_-{f} & D
}$$
commutes. Then the middle diagram above commutes, where ${\beta}$ is the counit of ${F^* \dashv F_*}$. So the existence part of the universal property holds. To prove the uniqueness part assume that ${g:F^* (L A) \rightarrow D}$ in $\calF$ is a rig map  such that the right diagram above commutes and check that the transposition of $g$ equals $f'$.

   The second part of the statement follows because ${F^*:\calE \rightarrow \calF}$ is a finite-limit-preserving left adjoint.
\end{proof}

    We now consider how to lift reticulations from $\Set$ to presheaf toposes.
    So let $\calC$ be a small category and consider and integral rig $P$ in the presheaf topos ${\Psh{\calC}}$. In other words, $P$ is a functor ${\opCat{\calC} \rightarrow \iRig}$ so we can consider the composite
$$\xymatrix{
\opCat{\calC} \ar[r]^-{P} & \iRig \ar[r]^-{L} & \dLat
}$$
which is a distributive lattice ${L P}$ in $\Psh{\calC}$. Moreover, for every ${t:b \rightarrow c}$ in $\calC$, the following diagram
$$\xymatrix{
P c \ar[d]_-{P t} \ar[r]^-{\eta_{P c}} & L (P c) \ar[d]^-{L(P t)} \\
P b \ar[r]_-{\eta_{P b}} & L (P b)
}$$
commutes, where ${\eta_{P c}: P c \rightarrow L(P c)}$ is the reticulation of the integral rig ${P c}$ in $\Set$. So we obtain a natural transformation ${\eta_P:P \rightarrow L P}$ such that, for each object $c$ in $\calC$, ${\eta_{P c}}$ is a rig morphism. In other words, we have obtained a rig morphism ${P \rightarrow L P}$ in $\Psh{\calC}$.

\begin{lemma}\label{LemReticulationsInPresheafCats}
The transformation ${\eta_P:P \rightarrow L P}$ above is the reticulation of $P$ in $\Psh{\calC}$. Moreover, ${\eta_P}$ is epi and local as a rig map.
\end{lemma}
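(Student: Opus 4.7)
The plan is to verify the three claims (universal property, epi, and local) pointwise, exploiting the fact that presheaf toposes compute all the relevant structure coordinate-wise: a rig (resp.\ distributive lattice) in $\Psh{\calC}$ is exactly a functor $\opCat{\calC}\to\iRig$ (resp.\ $\opCat{\calC}\to\dLat$), and limits, colimits, and the subobject $\Inv(-)$ are computed objectwise. Under this correspondence the construction $P\mapsto LP$ in the statement is just post-composition with $L:\iRig\to\dLat$, and $\eta_P$ is the family $(\eta_{Pc})_{c\in\calC}$.

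For the universal property, suppose $D$ is a distributive lattice in $\Psh{\calC}$ and ${f:P\to LD}$ is a morphism of rigs, i.e.\ a natural transformation whose components $f_c:Pc\to Dc$ are rig maps into distributive lattices. For each $c$, Lemma~\ref{LemLattification} yields a unique distributive lattice morphism $\bar f_c:L(Pc)\to Dc$ with $\bar f_c\circ \eta_{Pc}=f_c$. To see that the family $\bar f=(\bar f_c)$ is natural, fix $t:b\to c$ in $\calC$ and consider the two maps $\bar f_b\circ L(Pt)$ and $Dt\circ \bar f_c$ from $L(Pc)$ to $Db$; precomposing both with the epi $\eta_{Pc}$ gives $f_b\circ Pt=Dt\circ f_c$, so uniqueness in the universal property of $\eta_{Pc}$ forces the two maps to coincide. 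Uniqueness of $\bar f$ itself is likewise inherited pointwise.

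For the epi and local claims, recall that epimorphisms in $\Psh{\calC}$ are exactly the pointwise surjections and that the pullback defining $\Inv(-)$ (see the diagram above Definition~\ref{DefLocalMap}) is computed objectwise, so $\Inv(P)_c=\Inv(Pc)$ and similarly for $LP$. Each $\eta_{Pc}$ is surjective by the quotient description in Lemma~\ref{LemLattification}, so $\eta_P$ is epi. For locality we must check that the square
$$\xymatrix{
\Inv(P) \ar[d] \ar[r] & \Inv(LP) \ar[d] \\
P \ar[r]_-{\eta_P} & LP
}$$
is a pullback; but this is a diagram of presheaves and pullbacks in $\Psh{\calC}$ are objectwise, so it suffices that each component square is a pullback, which is precisely the locality of $\eta_{Pc}$ from Lemma~\ref{LemLattification}.

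The only step that requires a little care is the naturality verification for $\bar f$ in the universal property; everything else is routine pointwise bookkeeping. The conceptual point to keep in mind is that the algebraic nature of $\iRig$ and $\dLat$ is what allows the pointwise assembly to produce an internal rig map and an internal distributive lattice morphism, and it is the fact that $\Inv$ is defined by a finite-limit formula (so it is preserved pointwise) that makes the locality question reduce to a fibrewise check.
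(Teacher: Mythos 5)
Your proof follows essentially the same route as the paper's: apply Lemma~\ref{LemLattification} componentwise to construct $\bar f_c$, establish naturality of $\bar f$ by precomposing with the epi $\eta_{Pc}$, and deduce that $\eta_P$ is epi (pointwise surjections) and local (pullbacks in presheaf toposes are objectwise). One typographical slip: you wrote $f:P\to LD$ where you mean $f:P\to D$, as your subsequent description of the components $f_c:Pc\to Dc$ confirms.
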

\begin{proof}
Let $Q$ be a distributive lattice in $\Psh{\calC}$ and let ${f:P \rightarrow Q}$ be a rig morphism. Then, for each $c$ in $\calC$, there exists a unique rig map ${f'_c: L(P c) \rightarrow Q c}$ in $\Set$ such that the  diagram on the left below
$$\xymatrix{
P c \ar[rd]_-{f_c} \ar[r]^-{\eta_{P c}} & L(P c) \ar[d]^-{f'_c} &&
   L(P c) \ar[d]^-{f'_c} \ar[r]^-{L(P t)} & L(P b) \ar[d]^-{f'_b} \\
 & Q c &&
   Q c \ar[r]_-{Q t} & Q b
}$$
commutes. Also, for  every ${t:b \rightarrow c}$ in $\calC$, it is easy to check that the diagram on the right above commutes; just precompose with ${\eta_{P c}}$. So we obtain a rig map ${f':L P \rightarrow Q}$ in $\Psh{\calC}$ such that ${f' \eta_P = f}$. Since each ${\eta_{P c}: P c \rightarrow L(P c)}$ is surjective in $\Set$, ${\eta_P:P \rightarrow L P}$ is epi in $\Psh{\calC}$, so $f'$ is the unique map satisfying the above equality.

   It remains to show that ${\eta_P:P \rightarrow L P}$ is local; but this follows because each rig morphism ${\eta_{P c}: P c \rightarrow L(P c)}$ is local in $\Set$, and limits in presheaf toposes are calculated `as in $\Set$'.
\end{proof}

    We can now prove lift reticulations to bounded toposes over $\Set$.

\begin{proposition}\label{PropReticulationsInGrothendieckToposes}
Grothendieck toposes have reticulations. Moreover, these are epi and local.
\end{proposition}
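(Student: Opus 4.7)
The plan is to reduce the Grothendieck case to the presheaf case already handled by Lemma~\ref{LemReticulationsInPresheafCats}, by transferring along the sheafification geometric morphism and invoking Lemma~\ref{LemUpperStarsAppliedToReticulations} to control the transfer.

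First, I would express the given Grothendieck topos $\calE$ as $\Shv(\calC, J)$ for a small site $(\calC, J)$ and write ${F:\calE \rightarrow \Psh{\calC}}$ for the canonical geometric inclusion, so that $F^*$ is the associated sheaf functor $a$ and $F_*$ is the inclusion $i$ of sheaves into presheaves. Since $F_*$ is fully faithful, the counit ${\beta:F^* F_* \Rightarrow id_{\calE}}$ is a natural isomorphism; evaluated at an integral rig ${A}$ in $\calE$, it provides a rig isomorphism ${\beta_A: a(iA) \rightarrow A}$.

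Next, for an arbitrary integral rig ${A}$ in $\calE$, I would view ${iA}$ as an integral rig in $\Psh{\calC}$ (inverse and direct images of geometric morphisms preserve finite products, and hence rig structure) and apply Lemma~\ref{LemReticulationsInPresheafCats} to obtain its reticulation ${\eta_{iA}:iA \rightarrow L(iA)}$, which is simultaneously epi and local. Applying Lemma~\ref{LemUpperStarsAppliedToReticulations} to $F$, the map ${F^*\eta_{iA}:a(iA) \rightarrow a(L(iA))}$ is the reticulation of ${a(iA)}$ in $\calE$ and is still epi and local. Precomposing with ${\beta_A^{-1}:A \rightarrow a(iA)}$ then yields a universal rig morphism from ${A}$ to a distributive lattice in $\calE$ that is epi and local. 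Since this construction works for every ${A}$, the resulting pointwise reflections assemble into a left adjoint to ${\dLat(\calE) \rightarrow \iRig(\calE)}$, which is what is required for $\calE$ to have reticulations.

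The main subtlety is aligning the hypothesis of Lemma~\ref{LemUpperStarsAppliedToReticulations}, which produces reticulations only of objects of the form ${F^*B}$, with the desire to reticulate an arbitrary sheaf ${A}$; this is precisely where full faithfulness of ${i}$, and hence invertibility of the counit ${\beta_A}$, is essential. Beyond this I foresee no genuine obstacle: the epi and local properties transfer automatically, because $F^*$ preserves finite limits and epimorphisms, as already exploited in Lemma~\ref{LemUpperStarsAppliedToReticulations}.
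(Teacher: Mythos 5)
Your proof is correct and follows essentially the same route as the paper: present the Grothendieck topos as sheaves on a site, use the geometric inclusion into the presheaf topos, reticulate $F_*A$ there via Lemma~\ref{LemReticulationsInPresheafCats}, pull back along $F^*$ via Lemma~\ref{LemUpperStarsAppliedToReticulations}, and transport across the counit isomorphism $\beta_A:F^*F_*A \to A$. The only difference is presentational: you spell out explicitly why $\beta$ is an iso and flag the need to reconcile $F^*F_*A$ with $A$, which the paper leaves implicit.
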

\begin{proof}
Let ${\calF = \Shv(\calC, J)}$ be a Grothendieck topos. There is a topos inclusion ${F:\calF \rightarrow \Psh{\calC}}$ so the counit ${\beta:F^* F_* \rightarrow id_{\calF}}$ is an iso.
Now let $A$ be an integral rig in $\calF$. By Lemma~\ref{LemReticulationsInPresheafCats} there is an epi and local reticulation ${\eta:F_* A \rightarrow L(F_* A)}$ in ${\Psh{\calC}}$
and, by Lemma~\ref{LemUpperStarsAppliedToReticulations}, the integral rig ${F^* (F_* A)}$ in $\calF$, has an epi and local reticulation ${F^* \eta:F^* (F_* A) \rightarrow F^* (L (F_* A))}$. Since ${\beta:F^* (F_* A) \rightarrow A}$ is an iso of rigs, the result follows.
\end{proof}

\section{Really local integral  rigs}

   Fix a topos $\calE$ and recall (\cite{LawvereEmailCoquand} and Section~\ref{SecRigs}) that a rig $A$ is called {\em really local} if  there  is a (necessarily unique) local  morphism ${A \rightarrow \Omega}$ of rigs.
   A basic exercise in the internal logic of toposes shows the following.

\begin{lemma}\label{LemPresentationOfReallyLocalRigs}
 The rig $A$ is really local if and only if the following sequents hold
\[
\begin{array}{rcl}
0 \in \Inv(A) & \vdash & \perp_{\ \ } \\
(x + y) \in \Inv(A) & \vdash_{x, y} & x \in \Inv(A) \quad  \vee \quad y \in \Inv(A) \\
x \in \Inv(A) \quad \vee \quad y \in \Inv(A) & \vdash_{x, y}  & (x + y) \in \Inv(A) \\
\end{array}
\]
in the internal logic of $\calE$.
\end{lemma}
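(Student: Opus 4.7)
The plan is to unfold the definition of ``really local'' and translate equality of morphisms ${\iota:A\rightarrow \Omega}$ into sequents in the Mitchell--B\'enabou language of $\calE$, then match the two sides. The key observations are: (a) $\Omega$, viewed as an internal rig, has multiplication $\wedge$ with unit $\top$ and addition $\vee$ with unit $\bot$; and (b) by construction of $\iota$ as the classifying map of ${\Inv(A)\hookrightarrow A}$, we have, in the internal language, ${\iota(a) = \top \,\leftrightarrow\, a \in \Inv(A)}$, so that $\iota(a)$ is literally the truth value of the predicate ${a\in \Inv(A)}$.

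As noted right after Definition~\ref{DefReallyLocal}, $\iota$ is always a morphism of multiplicative monoids. Hence $A$ is really local if and only if $\iota$ is additionally a morphism of additive monoids, i.e.\ the two equalities ${\iota(0) = \bot}$ and ${\iota(x + y) = \iota(x) \vee \iota(y)}$ hold internally. I would then dispose of these one by one. The first, ${\iota(0) = \bot}$, says precisely that the proposition ${0\in \Inv(A)}$ is false, which is the first sequent. The second equality in $\Omega$ is a biconditional between the propositions ${(x+y)\in \Inv(A)}$ and ${(x\in \Inv(A)) \vee (y\in \Inv(A))}$, whose two implications are exactly the second and third sequents.

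To make this fully rigorous I would spell out one direction carefully, e.g.\ the equality ${\iota(x+y) = \iota(x)\vee \iota(y)}$. Pulling back $\top$ along each side and using the pullback defining $\Inv(A)$ (and the fact that $\vee$ in $\Omega$ classifies the union of two subobjects), equality of the two classifying maps ${A\times A \rightarrow \Omega}$ amounts to equality of their corresponding subobjects of ${A\times A}$, namely ${\{(x,y) \mid (x+y)\in \Inv(A)\}}$ and ${\{(x,y) \mid x\in \Inv(A)\} \cup \{(x,y) \mid y\in \Inv(A)\}}$, and equality of subobjects is equivalent to a pair of mutual inclusions, i.e.\ to the two sequents. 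The converse direction is identical, running the same translation backwards.

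The only mildly delicate point, and the place I would take most care, is confirming that the rig structure on $\Omega$ induced by the canonical map ${1\rightarrow \Omega}$ as multiplicative unit really does have $\vee$ as its addition (equivalently, that the distributive-lattice structure of $\Omega$ agrees with the one making $\iota$ multiplicative); beyond that, the argument is pure internal-language bookkeeping, and there is no genuine obstacle.
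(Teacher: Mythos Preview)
Your proposal is correct and follows essentially the same approach as the paper: both reduce ``really local'' to the requirement that $\iota$ preserve the additive monoid structure, and then translate the conditions $\iota(0)=\bot$ and $\iota(x+y)=\iota(x)\vee\iota(y)$ into the stated sequents via the classifying property of $\iota$. The paper's own proof is in fact just a sketch of the first sequent (showing $\iota\circ 0=\bot$ is equivalent to a pullback condition on $0\to 1\to A$ against $\Inv(A)\to A$), while you spell out the additive case more fully; your caveat about the rig structure on $\Omega$ is harmless, since the paper already records that $\Omega$ is an internal Heyting algebra.
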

\begin{proof}
For example, the composite ${\xymatrix{1 \ar[r]^-0 & A \ar[r]^-{\iota} & \Omega}}$ equals the bottom element ${\perp:1 \rightarrow \Omega}$ if and only if the following rectangle
$$\xymatrix{
0 \ar[d]_-{!} \ar[d] \ar[r]^-{!} & \Inv(A) \ar[d] \ar[r]   & 1 \ar[d]^-{\top} \\
1 \ar[r]_-{0}                    & A \ar[r]_-{\iota} & \Omega
}$$
is a pullback. Since the right square is a pullback by definition, the rectangle is a pullback if and only if the left square is a pullback.
\end{proof}

   Notice that if $A$ was a ring then the first two sequents in the statement of the lemma would say that $A$ is local in the usual sense. In other words: ``The preservation of addition is a strengthening, possible for positive quantities, of the usual notion of localness (which on truth values was only an inequality)" \cite{LawvereEmailCoquand}.

  Naturally, really local integral rigs  have a simpler characterization.

\begin{lemma}\label{LemReallyLocalIntegral}
An integral rig is really local if and only if the following sequents hold
\[
\begin{array}{rcl}
0 = 1 & \vdash & \perp_{\ \ } \\
x + y = 1 & \vdash_{x, y} & x = 1 \quad  \vee \quad y = 1 \\
\end{array}
\]
in the internal logic of $\calE$.
\end{lemma}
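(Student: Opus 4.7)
My plan is to obtain this lemma as a direct corollary of Lemma~\ref{LemPresentationOfReallyLocalRigs} by specializing the three characterizing sequents to the integral case. The key observation is Lemma~\ref{LemIntegralImpliesInvertiblesAreTrivial}: in any integral rig $A$ (internal to $\calE$), the canonical monomorphism ${1 \rightarrow \Inv(A)}$ is an isomorphism, hence the subobject ${\Inv(A) \rightarrowtail A}$ coincides with the subobject ${1:1 \rightarrow A}$. Internally, this means that the formula ${x \in \Inv(A)}$ is equivalent to the formula ${x = 1}$.

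Granting this, I substitute ${x = 1}$ for ${x \in \Inv(A)}$ in each of the three sequents of Lemma~\ref{LemPresentationOfReallyLocalRigs}. The first sequent ${0 \in \Inv(A) \vdash\, \perp}$ becomes ${0 = 1 \vdash\, \perp}$, and the second sequent ${(x + y) \in \Inv(A) \vdash_{x, y} x \in \Inv(A) \vee y \in \Inv(A)}$ becomes ${x + y = 1 \vdash_{x, y} x = 1 \vee y = 1}$. These are precisely the two sequents in the statement.

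It remains to observe that the third sequent, ${x \in \Inv(A) \vee y \in \Inv(A) \vdash_{x, y} (x + y) \in \Inv(A)}$, which specializes to ${x = 1 \vee y = 1 \vdash_{x, y} x + y = 1}$, holds automatically when $A$ is integral: from ${x = 1}$ one concludes ${x + y = 1 + y = 1}$ by integrality, and symmetrically from ${y = 1}$; this disjunction elimination is available in the internal logic of any topos. Consequently, the third sequent adds nothing, and the two displayed sequents characterize really local integral rigs.

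No step here presents a real obstacle; the only point worth handling carefully is the internal translation of Lemma~\ref{LemIntegralImpliesInvertiblesAreTrivial} into the statement that membership in $\Inv(A)$ is equivalent to equality with $1$. This follows at once from the fact that the classifying map ${\iota:A \rightarrow \Omega}$ of ${\Inv(A) \rightarrowtail A}$ equals the classifying map of ${1:1 \rightarrow A}$ when the two subobjects coincide, and this equality of subobjects is exactly the content of Lemma~\ref{LemIntegralImpliesInvertiblesAreTrivial}.
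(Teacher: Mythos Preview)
Your proposal is correct and follows essentially the same approach as the paper: both arguments invoke Lemma~\ref{LemIntegralImpliesInvertiblesAreTrivial} to identify the subobject ${\Inv(A) \rightarrowtail A}$ with ${1:1 \rightarrow A}$, thereby reducing the characterization to that of Lemma~\ref{LemPresentationOfReallyLocalRigs} with ${x \in \Inv(A)}$ replaced by ${x = 1}$. Your treatment is in fact slightly more explicit than the paper's, since you spell out why the third sequent becomes redundant under integrality.
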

\begin{proof}
   Recall that the multiplicative unit ${1:1 \rightarrow A}$ factors through ${\Inv(A) \rightarrow A}$ and, if $A$ is integral, the factorization ${1:1 \rightarrow \Inv(A)}$ is an iso (Lemma~\ref{LemIntegralImpliesInvertiblesAreTrivial}). In other words, if $A$ is integral then the following square
$$\xymatrix{
1 \ar[d]_-{1} \ar[r] & 1 \ar[d]^-{\top} \\
A \ar[r]_-{\iota} & \Omega
}$$
is a pullback.
\end{proof}

   For any rig $A$ in $\calE$  define the object of ${\bcu(A) \rightarrow A\times A}$ of {\em Binary Covers of the Unit} by declaring the square below
$$\xymatrix{
\bcu(A) \ar[d] \ar[r] & 1 \ar[d]^-{1}  \\
A\times A \ar[r]_-{+} & A
}$$
to be a pullback in $\calE$. If $A$ is integral then there is a unique ${\lambda:A \rightarrow \bcu(A)}$ such that the diagram on the left below
$$\xymatrix{
A \ar[d]_-{\twopl{!}{id}} \ar[r]^-{\lambda} & \bcu(A) \ar[d] &&
   A \ar[d]_-{\twopl{id}{!}} \ar[r]^-{\rho} & \bcu(A) \ar[d] \\
1\times A \ar[r]_-{1\times id} & A\times A &&
   A\times 1 \ar[r]_-{id\times 1} & A\times A
}$$
commutes. Symmetrically, there is a unique ${\rho:A \rightarrow \bcu(A)}$ such that the diagram on the right above commutes. (In the case ${\calE = \Set}$, ${\bcu(A) = \{ (x, y) \mid x + y = 1 \}}$, ${\lambda x = (1, x)}$ and ${\rho x = (x, 1)}$ for ${x\in A}$.)
 We can now reformulate Lemma~\ref{LemReallyLocalIntegral} as follows.

\begin{lemma}\label{LemCharReallyLocalInCoherentCat}
An integral rig $A$ in $\calE$ is really local if and only if the diagram below
$$\xymatrix{
0 \ar[r]^-{!} & 1 \ar[r]<+1ex>^-{0} \ar[r]<-1ex>_-{1} & A
}$$
is an equalizer and  ${[\lambda, \rho]:A + A \rightarrow \bcu(A)}$ is epi.
\end{lemma}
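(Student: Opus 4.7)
The plan is to derive the statement directly from Lemma~\ref{LemReallyLocalIntegral} by rephrasing each of its two sequents in the internal language of $\calE$ as a condition on subobjects, and then recognising those subobject conditions as the two categorical clauses displayed in the statement.

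For the sequent $0 = 1 \vdash \perp$: the subobject of $1$ cut out by the formula $0 = 1$ is, by the standard interpretation in the Mitchell--B\'enabou language, the equalizer of the two maps $0, 1 : 1 \rightrightarrows A$ viewed as a subobject of $1$. The sequent asserts that this subobject is the bottom of $\mathrm{Sub}(1)$, namely $0 \hookrightarrow 1$. Hence the first sequent holds iff the fork $0 \to 1 \rightrightarrows A$ is an equalizer, which is the first clause.

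For the sequent $x + y = 1 \vdash_{x, y} x = 1 \vee y = 1$: first, integrality (namely $1 + x = 1$) is precisely what guarantees that $\twopl{!}{id}, \twopl{id}{!} : A \to A \times A$ factor through $\bcu(A) \hookrightarrow A \times A$, so the maps $\lambda, \rho : A \to \bcu(A)$ of the statement indeed exist; both are monic, being sections of the appropriate projection composed with $\bcu(A) \to A \times A$. Now, in the internal language the subobject of $A \times A$ defined by $x + y = 1$ is $\bcu(A)$ by the very pullback defining it, while the subobjects defined by $x = 1$ and by $y = 1$ are $1 \times A$ and $A \times 1$, which coincide with the images of $\lambda$ and $\rho$ in $A \times A$. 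The sequent then asserts the subobject inequality
\[
\bcu(A) \;\leq\; \lambda(A) \vee \rho(A) \qquad \text{in } \mathrm{Sub}(A \times A).
\]
Since both $\lambda$ and $\rho$ already factor through $\bcu(A)$, this is equivalent to the equality $\lambda(A) \vee \rho(A) = \bcu(A)$ in $\mathrm{Sub}(\bcu(A))$. In a topos (indeed in any regular category) the join of two subobjects equals the image of the induced map from their coproduct, so this equality is equivalent to $[\lambda, \rho] : A + A \to \bcu(A)$ being a cover, i.e.\ epi. This is the second clause.

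The only delicate point, and the one I would take most care over, is the dictionary between internal formulas and subobjects: identifying the equalizer of $0, 1 : 1 \rightrightarrows A$ with $\llbracket 0 = 1 \rrbracket$, the subobject $\bcu(A)$ with $\llbracket x + y = 1 \rrbracket$, and the image of $[\lambda, \rho]$ with $\llbracket x = 1 \vee y = 1 \rrbracket \cap \bcu(A)$. All of these are standard consequences of the semantics of the Mitchell--B\'enabou language in an elementary topos, so combining the two equivalences above with Lemma~\ref{LemReallyLocalIntegral} yields the claim at once.
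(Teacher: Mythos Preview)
Your proposal is correct and follows exactly the approach the paper intends: the paper presents this lemma as a mere reformulation of Lemma~\ref{LemReallyLocalIntegral} and gives no proof at all, so you have simply spelled out the standard dictionary between the two sequents of that lemma and the subobject conditions in the statement. Your identification of $\llbracket 0=1\rrbracket$ with the equalizer of $0,1:1\rightrightarrows A$, of $\llbracket x+y=1\rrbracket$ with $\bcu(A)$, and of $\llbracket x=1\vee y=1\rrbracket$ with the image of $[\lambda,\rho]$ is exactly the unpacking the paper leaves to the reader.
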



   We now characterize really local integral rigs in presheaf toposes.
   Let $\calC$ be a small category and ${\Psh{\calC}}$ be the associated presheaf topos.
   Since the theory of integral rigs is clearly algebraic, an integral rig internal to $\Psh{\calC}$ is simply a functor ${A:\opCat{\calC} \rightarrow \iRig}$.

\begin{lemma}\label{LemReallyLocalRigsInPresheafToposes}
An integral rig $A$ in $\Psh{\calC}$ is really local if and only if the integral rig ${A c}$ is really local in $\Set$ for every $c$ in $\calC$.
\end{lemma}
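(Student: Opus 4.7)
The plan is to apply Lemma~\ref{LemCharReallyLocalInCoherentCat}, which reformulates real localness of an integral rig as the conjunction of two elementary categorical conditions: the parallel pair ${0, 1 : 1 \to A}$ has equalizer $0$, and the canonical map ${[\lambda, \rho] : A + A \to \bcu(A)}$ is epi. Both conditions are stated entirely in terms of finite limits, finite colimits, and epimorphisms, and all of these are computed pointwise in the presheaf topos $\Psh{\calC}$. So the characterization will transfer between $\Psh{\calC}$ and the family of fibers in $\Set$ automatically.

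More concretely, I would first note that the construction of $\bcu$ is defined by a pullback, so evaluation at $c$ (which preserves finite limits) gives ${\bcu(A)(c) = \bcu(A c)}$; likewise the coproduct $A + A$ in $\Psh{\calC}$ is the pointwise coproduct, and the maps $\lambda$ and $\rho$ are defined via universal properties of $\bcu$, so they restrict pointwise to the $\Set$-versions $\lambda_{A c}, \rho_{A c} : A c \to \bcu(A c)$. Hence the comparison map ${[\lambda, \rho] : A + A \to \bcu(A)}$ evaluated at $c$ is exactly ${[\lambda_{A c}, \rho_{A c}] : A c + A c \to \bcu(A c)}$.

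Next I would invoke the two standard facts that in $\Psh{\calC}$ a morphism is epi iff it is pointwise surjective and equalizers are computed pointwise. Consequently: ${0 \to 1 \rightrightarrows A}$ is an equalizer in $\Psh{\calC}$ iff, for every $c$, the equalizer of ${0, 1 : 1 \to A c}$ is empty, i.e.\ ${0 \neq 1}$ in $A c$; and ${[\lambda, \rho] : A + A \to \bcu(A)}$ is epi in $\Psh{\calC}$ iff ${[\lambda_{A c}, \rho_{A c}]}$ is surjective in $\Set$ for every $c$. Applying Lemma~\ref{LemCharReallyLocalInCoherentCat} in $\Psh{\calC}$ on one side and in $\Set$ (once per object $c$) on the other, the two conditions together are equivalent to $A c$ being really local in $\Set$ for every $c$, which is exactly what we want.

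The main thing to check carefully is the pointwise identification of the auxiliary constructions $\bcu(A)$, $\lambda$, $\rho$ and the comparison ${[\lambda,\rho]}$, but since each of them is specified by a universal property built from pullbacks and factorizations, their preservation by the evaluation functors ${\Psh{\calC} \to \Set}$ is automatic. No new delicate argument is needed beyond the pointwise nature of finite (co)limits and epis in presheaf toposes.
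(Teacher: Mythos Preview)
Your proposal is correct and follows essentially the same approach as the paper: invoke Lemma~\ref{LemCharReallyLocalInCoherentCat} and use that equalizers, pullbacks (hence $\bcu$), coproducts, and epimorphisms in $\Psh{\calC}$ are all computed pointwise. The paper's proof is just a slightly terser version of exactly this argument.
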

\begin{proof}
The fork in Lemma~\ref{LemCharReallyLocalInCoherentCat} is an equalizer in $\Psh{\calC}$ if and only if for every ${c\in \calC}$, ${0 \not= 1 \in A c}$.
   Also, for each $c$ in $\calC$, ${(\bcu(A)) c = \{ (x, y) \in (A c) \times (A c) \mid x + y = 1 \in A c \}}$. Hence, the map ${[\lambda, \rho]:A + A \rightarrow \bcu(A)}$ is epi in ${\Psh{\calC}}$ if and only if ${[\lambda_c, \rho_c]:(A c) + (A c) \rightarrow (\bcu(A))c}$ is epi in $\Set$ for each $c$ in $\calC$. In other words, for each ${c\in \calC}$ and  every ${x, y \in A c}$, ${x + y = 1}$ implies ${x = 1}$ or ${y = 1}$.
\end{proof}

\section{Coherent localic toposes}

   Although Proposition~\ref{PropReticulationsInGrothendieckToposes} shows that reticulations exist in all Grothendieck toposes we are going to be mainly interested in coherent localic toposes or, equivalently, toposes of sheaves over spectral spaces. See D3.3.14 in \cite{elephant}.

   Let $D$ be a distributive lattice seen as a coherent category (A1.4 in \cite{elephant}).
   Its {\em coherent coverage} (A2.1.11(b) loc.~cit.) is the function that sends each ${d \in D}$ to the set of finite families ${(d_i \leq d \mid i\in I)}$ such that ${\bigvee_{i\in I} d_i = d}$. (These will be called {\em covering} families or simply {\em covers}.) The resulting topos of sheaves will be denoted by ${\Shv(D)}$. It is well-known that ${\Shv(D)}$  is equivalent to the topos of sheaves on the locale corresponding to the frame of ideals of $D$ (see C2.2.4(b) loc.~cit.).

   Binary covers ${a \vee b = d}$ of ${d\in D}$ will play an important role because in order to check that a presheaf ${P:\opCat{D}\rightarrow \Set}$ is a sheaf, it is enough to check the sheaf condition for binary covers.
   The empty family covers an object ${d\in D}$ if and only if ${d= \bot}$. So, for any ${X \in \Shv(D)}$, ${X \bot}$ is terminal in $\Set$.

   Since the theory of integral rigs is algebraic, it is well-known that an integral rig in $\Shv(D)$ is a functor ${\opCat{D} \rightarrow \iRig}$  such that the composite presheaf ${\opCat{D} \rightarrow \iRig \rightarrow \Set}$ is a sheaf. In this section we characterize really local integral rigs in ${\Shv(D)}$ which requires a little extra effort because the theory of really local rigs is not algebraic. Specifically, we will need explicit descriptions of finite colimits in ${\Shv(D)}$.

\begin{lemma}[Epis in $\Shv(D)$]\label{LemEpisInShvD} A map ${f:X \rightarrow Z}$ is epi in ${\Shv(D)}$ if and only if for every ${d\in D}$ and ${z \in Y d}$, there exists a cover ${\bigvee_{i\in I} d_i  = d}$ such that ${z\cdot d_i}$ is in the image of the function ${f_{d_i}:X d_i \rightarrow Z d_i}$.
\end{lemma}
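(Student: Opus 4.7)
The plan is to characterize the image of $f$ in $\Shv(D)$ explicitly as a subsheaf of $Z$ and then exploit the fact that toposes are balanced. Define a sub\emph{presheaf} $Z' \hookrightarrow Z$ by
\[ Z'(d) = \{\, z \in Z d \mid \text{there is a cover } \bigvee_{i \in I} d_i = d \text{ with } z \cdot d_i \in \mathrm{im}(f_{d_i}) \text{ for all } i \,\}. \]
The sub-functor structure on $Z'$ uses that if $z \cdot d_i = f_{d_i}(x_i)$ and $e \leq d$, then $\{d_i \wedge e\}$ is a cover of $e$ and $(z \cdot e)\cdot(d_i \wedge e) = f_{d_i \wedge e}(x_i \cdot (d_i \wedge e))$, so $Z'$ is closed under restriction. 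The key step is to verify that $Z'$ is in fact a subsheaf of $Z$: given a cover $\{e_j\}$ of $d$ and $z \in Zd$ with each $z \cdot e_j \in Z'(e_j)$, refine each witnessing cover $\{e_{jk}\}$ of $e_j$ to obtain the composite family $\{e_{jk}\}_{j,k}$, which is again a cover of $d$ (using transitivity of the coherent coverage) and makes each $z \cdot e_{jk}$ lie in $\mathrm{im}(f_{e_{jk}})$; hence $z \in Z'(d)$. This composition-of-covers argument is the main technical obstacle, but it is exactly the standard closure of the ``locally in image'' property.

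With $Z'$ in hand, note that $f$ factors as $X \xrightarrow{f'} Z' \xrightarrow{i} Z$, since for any $x \in X d$ the trivial cover $\{d \leq d\}$ witnesses that $f_d(x) \in Z'(d)$. The statement of the lemma asserts that $f$ is epi if and only if $Z' = Z$, so it suffices to prove each direction using this factorization.

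For the $(\Leftarrow)$ direction, assume the local surjectivity condition, so $Z' = Z$. Given $g, h : Z \rightrightarrows W$ in $\Shv(D)$ with $g f = h f$, pick $d \in D$ and $z \in Zd$; by hypothesis there is a cover $\{d_i\}$ with $z \cdot d_i = f_{d_i}(x_i)$ for some $x_i \in X d_i$. Then $(g_d z)\cdot d_i = g_{d_i}(f_{d_i}(x_i)) = h_{d_i}(f_{d_i}(x_i)) = (h_d z)\cdot d_i$ for all $i$, and since $W$ is a sheaf the matching family $\{(g_d z)\cdot d_i\}$ has a unique amalgamation, forcing $g_d z = h_d z$. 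Hence $g = h$ and $f$ is epi.

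For the $(\Rightarrow)$ direction, assume $f$ is epi. Since $f = i \circ f'$ and $f$ is epi, the mono $i : Z' \hookrightarrow Z$ is also epi (any pair $g, h : Z \rightrightarrows W$ equalizing $i$ also equalizes $f$). A Grothendieck topos is balanced, so $i$ is an isomorphism, i.e.\ $Z' = Z$; unfolding the definition of $Z'$ gives precisely the local surjectivity condition stated in the lemma.
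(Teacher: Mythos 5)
Your proof is correct. The paper itself does not give a proof: it simply cites Corollary~III.7.5 of Mac Lane--Moerdijk, which is exactly the statement that epimorphisms of sheaves are local surjections. What you have done is supply a self-contained proof of that cited result, by constructing the image subsheaf $Z'$ directly (the ``locally in the image'' presheaf, verified to be a sheaf via transitivity of the coherent coverage), factoring $f$ through it, and then arguing each direction: $(\Leftarrow)$ by a direct check of the epi property using the sheaf axiom on $W$, and $(\Rightarrow)$ by noting that the mono $Z' \hookrightarrow Z$ becomes epi, hence iso since Grothendieck toposes are balanced. The only stylistic observation is that your $(\Rightarrow)$ direction invokes balancedness, whereas a proof not relying on that fact could instead note that $Z'$ is the sheaf-theoretic image of $f$ (the sheafification of the image presheaf), so $f$ epi forces $Z' = Z$ by the universal property of images; but balancedness is standard and the argument as written is perfectly sound. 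Both routes buy you the same thing; yours has the advantage of making the lemma self-contained rather than an appeal to a reference.

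One very minor remark: the statement as printed in the paper contains a typo, writing ``$z \in Y d$'' where $Z d$ is clearly intended; you read it correctly.
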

\begin{proof}
This is an instance of Corollary~{III.7.5} in \cite{maclane2}.
\end{proof}

   An ${f:X \rightarrow Z}$ as in Lemma~\ref{LemEpisInShvD} is sometimes called `locally surjective'.

   Consider now the initial object ${0 \in \Shv(D)}$. As any other sheaf, it must satisfy that ${0\bot}$ is terminal in $\Set$. It is easy to check that ${0 d = 0 \in \Set}$ for any ${d > \bot}$.

\begin{lemma}[Binary coproducts in $\Shv(D)$]\label{LemCoprosInShvD} For every $X$, $Y$ in $\Shv(D)$, the coproduct ${X + Y}$ may be defined by
\[ (X + Y) d = \{ (a, b, x, y) \mid a \vee b = d , a\wedge b = \bot, x\in (X a) , y \in (Y b) \} \]
and, for any ${(a, b, x, y)\in (X + Y) d}$,
\[ ((X + Y)(c\leq d))(a, b, x, y)  = (a, b, x, y) \cdot c = (a\wedge c, b\wedge c, x\cdot (a\wedge c), y\cdot (b\wedge c)) \]
where ${x\cdot c = (X(c\leq d)) x \in X c}$ and ${y\cdot c = (Y(c\leq d)) y \in Y c}$.
\end{lemma}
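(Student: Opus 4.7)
The plan is to verify three things: the displayed formula defines a presheaf on $D$, this presheaf is in fact a sheaf, and it has the universal property of the coproduct with the evident injections. Throughout I will use that if $a \vee b = d$ and $a \wedge b = \bot$, then for every $c \leq d$ one has $(a \wedge c) \vee (b \wedge c) = c$ and $(a \wedge c) \wedge (b \wedge c) = \bot$, so the restriction formula really lands in the set $(X+Y)c$. Functoriality of restriction along $c' \leq c \leq d$ is immediate from $a \wedge c \wedge c' = a \wedge c'$ combined with functoriality of $X$ and $Y$.

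Next I would check the sheaf condition. Since the coherent coverage on $D$ is generated by binary covers, it suffices to glue along $d = d_0 \vee d_1$. Suppose $(a_j, b_j, x_j, y_j) \in (X+Y)d_j$ for $j \in \{0,1\}$ have equal restrictions to $d_0 \wedge d_1$. Equating components gives $a_0 \wedge d_1 = a_1 \wedge d_0$ and $b_0 \wedge d_1 = b_1 \wedge d_0$. Set $a := a_0 \vee a_1$, $b := b_0 \vee b_1$. Expanding $a \wedge b$ into four meets, the two diagonal terms $a_j \wedge b_j$ vanish by hypothesis, while $a_0 \wedge b_1 \leq a_0 \wedge d_1 = a_1 \wedge d_0 \leq a_1$ forces $a_0 \wedge b_1 \leq a_1 \wedge b_1 = \bot$, and symmetrically for $a_1 \wedge b_0$; so $a \wedge b = \bot$ and clearly $a \vee b = d$. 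The pair $x_0, x_1$ is compatible along the cover $\{a_0, a_1\}$ of $a$ (one checks $a_0 \wedge a_1 \leq a_0 \wedge d_1$ and then uses the hypothesis), so the sheaf property of $X$ produces a unique $x \in X a$ restricting to each $x_j$. Symmetrically the sheaf property of $Y$ produces $y \in Y b$. Then $(a,b,x,y)$ is the (necessarily unique) glueing in $(X+Y)d$.

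For the universal property, I define injections $in_0 : X \to X+Y$ at level $d$ by $x \mapsto (d, \bot, x, *)$, where $*$ is the unique element of $Y\bot$ (terminal because the empty family covers $\bot$), and symmetrically $in_1$. Given a cospan $X \xrightarrow{f} Z \xleftarrow{g} Y$ in $\Shv(D)$, define the mediating arrow at $d$ sending $(a,b,x,y)$ to the unique section of $Z$ over $d$ whose restrictions to $a$ and $b$ are $f_a x$ and $g_b y$. Such a section exists and is unique because $\{a,b\}$ is a binary cover of $d$, and the compatibility over $a \wedge b = \bot$ is automatic since $Z\bot$ is terminal. Naturality in $d$ follows from the functoriality of $f$ and $g$ together with compatibility of restriction with meets, and the composites with the injections recover $f$ and $g$. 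Uniqueness of the mediator follows since each section $(a,b,x,y)$ is determined by the sheaf property of $X+Y$ from $in_0 x$ and $in_1 y$ restricted along $a$ and $b$.

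The only step with any real content is the sheaf condition, where one must carefully match the partition components $a_j, b_j$ before invoking the sheaf properties of $X$ and $Y$ separately; everything else is routine manipulation. I would present the four-term expansion of $a \wedge b$ explicitly so that the matching is transparent to the reader.
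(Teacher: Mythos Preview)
Your proposal is correct and follows essentially the same approach as the paper: the presheaf check, the sheaf condition via binary covers (with the four-term expansion of $a \wedge b$ and the key identity $a_0 \wedge d_1 = a_1 \wedge d_0$ coming from compatibility), the injections $x \mapsto (d,\bot,x,*)$, and the mediating map built from the sheaf property of $Z$ all match the paper's argument. The only places where the paper is more explicit are the verification that the amalgamation $(a,b,x,y)$ actually restricts to each $(a_j,b_j,x_j,y_j)$ (requiring $a \wedge d_0 = a_0$, etc.) and the uniqueness of that amalgamation, both of which you subsume in ``necessarily unique glueing''; these are routine but worth writing out.
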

\begin{proof}
It is easy to check that, defined as above, ${X + Y}$ is indeed a presheaf on $D$. To prove that it is a sheaf assume that ${c_0 \vee c_1 = d}$ and let
${ (a_0, b_0, x_0, y_0) \in (X + Y) c_0}$ and ${(a_1, b_1, x_1, y_1) \in (X + Y) c_1}$ be a compatible family. That is, for any ${c \leq c_0 \wedge c_1}$,
\[ (a_0, b_0, x_0, y_0) \cdot c = (a_1, b_1, x_1, y_1) \cdot c \]
which, may be instructive to draw as follows
$$\xymatrix{
 & & = \\
                     &             & \ar[ld] c \ar[rd] \\
(a_0, b_0, x_0, y_0) \ar@{|->}[rruu] & c_0 \ar[rd] &                   & \ar[ld] c_1 & \ar@{|->}[lluu] (a_1, b_1, x_1, y_1) & \\
                     &             & d = c_0 \vee c_1
}$$
and means that the following equations
\[ a_0 \wedge c = a_1 \wedge c \quad
   b_0 \wedge c = b_1 \wedge c \quad
   x_0 \cdot (a_0 \wedge c) = x_1 \cdot (a_1\wedge c) \quad
   y_0 \cdot (b_0 \wedge c) = y_1 \cdot (b_1\wedge c)
    \]
hold. We have to show that there is a unique amalgamation.
First, we claim that  the pair ${a = a_0 \vee a_1}$, ${b = b_0 \vee b_1}$ is a partition of $d$.
Of course, ${a\vee b = d}$ so we need only prove that ${a\wedge b = \bot}$.
Since
\[ a \wedge b = (a_0 \wedge b_0) \vee (a_0 \wedge b_1) \vee (a_1\wedge b_0) \vee (a_1 \wedge b_1) = (a_0 \wedge b_1) \vee (a_1\wedge b_0) \]
we are left to show that ${a_0 \wedge b_1 = \bot = a_1\wedge b_0}$.
Taking ${c = c_0 \wedge c_1}$, compatibility means
\[ a_0 \wedge (c_0 \wedge c_1) = a_1 \wedge (c_0 \wedge c_1) \quad
   b_0 \wedge (c_0 \wedge c_1) = b_1 \wedge (c_0 \wedge c_1) \]
\[   x_0 \cdot (a_0 \wedge (c_0 \wedge c_1)) = x_1 \cdot (a_1\wedge (c_0 \wedge c_1)) \quad
   y_0 \cdot (b_0 \wedge (c_0 \wedge c_1)) = y_1 \cdot (b_1\wedge (c_0 \wedge c_1))
    \]
which simplifies to
\[ a_0 \wedge c_1 = a_1 \wedge c_0  \quad
   b_0 \wedge c_1 = b_1 \wedge c_0  \]
\[   x_0 \cdot (a_0 \wedge  c_1) = x_1 \cdot (a_1\wedge c_0) \quad  y_0 \cdot (b_0 \wedge  c_1) = y_1 \cdot (b_1\wedge c_0 )   \]
so \[ a_0 \wedge b_1 = a_0 \wedge b_1 \wedge c_1 = a_1 \wedge b_1 \wedge c_0 = \bot\wedge c_0 = \bot \]
and analogously,  ${a_1 \wedge b_0 = \bot}$. This completes the proof that ${a, b \leq d}$ is a partition of $d$.

   Consider now ${x_0 \in X a_0}$ and ${x_1 \in X a_1}$. Since
\[    x_0 \cdot (a_0 \wedge a_1) = x_0 \cdot (a_0 \wedge c_1) \cdot (a_0 \wedge a_1) =
x_1 \cdot (a_1\wedge c_0) \cdot (a_0 \wedge a_1) = x_1  \cdot (a_0 \wedge a_1) \]
 and $X$ is a sheaf,  there exists a unique ${x \in X a = X(a_0 \vee a_1)}$ such that ${x \cdot a_0 = x_0}$ and ${x\cdot a_1 = x_1}$.
   Similarly, there exists a unique ${y \in Y b = Y(b_0 \vee b_1)}$ such that ${y\cdot b_0 = y_0}$ and ${y\cdot b_1 = y_1}$. Altogether, we have obtained an element ${ (a, b, x, y) \in (X+ Y) d}$. Moreover,
\[ (a, b, x, y) \cdot c_0 = (a \wedge c_0, b\wedge c_0, x \cdot (a\wedge c_0), y\cdot (b\wedge c_0)) = (a_0, b_0, x_0, y_0) \]
because \[ a \wedge c_0 = (a_0 \vee a_1) \wedge c_0 = (a_0 \wedge c_0) \vee (a_1 \wedge c_0) = a_0 \vee (a_0 \wedge c_1) = a_0 \] and,  analogously,  ${b\wedge c_0 = b_0}$.

   Assume now that ${(a', b', x', y') \in (X+ Y) d}$ is such that ${(a', b', x', y')\cdot c_0 = (a_0, b_0, x_0, y_0)}$ and ${(a', b', x', y')\cdot c_1 = (a_1, b_1, x_1, y_1)}$. This means that
\[ a'\wedge c_0 = a_0 \quad b'\wedge c_0 = b_0 \quad x'\cdot (a' \wedge c_0) = x_0 \quad
y' \cdot (b' \wedge c_0) = y_0 \]
\[ a'\wedge c_1 = a_1 \quad b'\wedge c_1 = b_1 \quad x'\cdot (a' \wedge c_1) = x_1 \quad
y' \cdot (b' \wedge c_1) = y_1 \]
so ${a = a_0 \vee a_1 = (a' \wedge c_0) \vee (a' \wedge c_1) = a' \wedge (c_0 \vee c_1) = a' \wedge d = a'}$ and, similarly, ${b = b'}$. The first and third columns above imply ${x'\cdot a_0 = x_0}$ and ${x'\cdot a_1 = x_1}$ so $x'$ must be $x$. The second and fourth columns imply that ${y' = y}$. This completes the proof that ${X + Y}$ is a sheaf.

   Let $\bullet$ denote the unique element in ${Y\bot}$ and, for each $d$ in $D$, consider the function ${X d \rightarrow (X+ Y) d}$ that sends ${x\in X d}$ to ${(d, \bot, x, \bullet)}$.
   It is not difficult to check that this definition is natural in $d$ and so, determines a map ${X \rightarrow X + Y}$ in $\Shv(D)$. Of course, we have the analogue ${Y \rightarrow X + Y}$ and we claim that together they form a coproduct diagram.
   To prove this consider a cospan ${f:X \rightarrow Z\leftarrow Y:g}$. Assume that we have a map ${h:X \rightarrow Z}$ such that the following diagram
$$\xymatrix{
X \ar[rd]_-f \ar[r] & X + Y \ar[d]^-h & \ar[l] \ar[ld]^-g Y \\
 & Z
}$$
commutes. For and ${d \in D}$ and ${(a, b, x, y) \in (X + Y) d}$ we have that
\[ (h_d(a, b, x, y))\cdot a = h_a((a, b, x, y)\cdot a) = h_a(a, \bot, x \cdot a, \bullet) = f_a x \]
and, similarly, ${(h_d(a, b, x, y))\cdot b = g_b y}$. As $Z$ is a sheaf, ${h_d(a, b, x, y)}$ is uniquely determined by the last two equations. It is straightforward to check that if we define ${h_d(a, b, x, y)}$ in this way then the resulting family of functions is natural in $d$.
\end{proof}

   In particular, ${(1 + 1) d = \{ (a, b) \mid a\vee b = d, a\wedge b = \bot \}}$. Roughly speaking, ${1 + 1}$  is the `object of partitions' of $D$.

\begin{proposition}\label{PropCharReallyLocal}
An integral rig $A$ in ${\Shv(D)}$ is really local if and only if, for every ${d\in D}$ the following two conditions hold:
\begin{enumerate}
\item If ${A d = 1}$ then ${d = \bot}$. (Equivalently, ${0 = 1 \in A d}$ implies ${d = \bot}$.)
\item For all ${u, v \in A d}$ such that ${u + v = 1}$, there is a cover ${a \vee b = d}$ such that ${u\cdot a = 1}$ and ${v\cdot b = 1}$.
\end{enumerate}
\end{proposition}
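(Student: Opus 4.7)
The plan is to invoke Lemma~\ref{LemCharReallyLocalInCoherentCat}, which characterizes $A$ being really local by two conditions: the fork ${0 \to 1 \rightrightarrows A}$ with parallel arrows $0$ and $1$ is an equalizer, and ${[\lambda, \rho]: A + A \to \bcu(A)}$ is epi. I will translate each condition into the stagewise language of $\Shv(D)$ using the explicit descriptions of finite limits and colimits available in Lemmas~\ref{LemEpisInShvD} and~\ref{LemCoprosInShvD}.

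For the equalizer condition, finite limits in $\Shv(D)$ are computed pointwise, so at each ${d \in D}$ the equalizer of ${0, 1: 1 \to A}$ is $\{\ast\}$ when ${0 = 1}$ in $Ad$ and is empty otherwise. Since the initial sheaf takes value $\{\ast\}$ at $\bot$ and $\emptyset$ elsewhere, this equalizer coincides with the initial sheaf precisely when condition~1 holds.

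For the epi condition, first note that ${\bcu(A)(d) = \{(u, v) \in Ad \times Ad : u + v = 1\}}$ since $\bcu(A)$ is a pullback. Using Lemma~\ref{LemCoprosInShvD}, the component ${[\lambda, \rho]_d}$ sends ${(a, b, x, y) \in (A + A)(d)}$ to the unique pair ${(u, v) \in \bcu(A)(d)}$ with ${u \cdot a = 1}$, ${v \cdot a = x}$, ${u \cdot b = y}$, and ${v \cdot b = 1}$; this is the sheaf amalgamation of ${\lambda_a x = (1, x) \in \bcu(A)(a)}$ and ${\rho_b y = (y, 1) \in \bcu(A)(b)}$. Hence the image of ${[\lambda, \rho]_d}$ consists of exactly those ${(u, v) \in \bcu(A)(d)}$ admitting a partition ${a \vee b = d}$, ${a \wedge b = \bot}$, with ${u \cdot a = 1}$ and ${v \cdot b = 1}$. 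By Lemma~\ref{LemEpisInShvD}, ${[\lambda, \rho]}$ is epi iff, for every such $(u, v)$, there exists a finite cover ${\bigvee_i d_i = d}$ such that each restriction ${(u \cdot d_i, v \cdot d_i)}$ lies in the image of ${[\lambda, \rho]_{d_i}}$.

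The core step is matching this with condition~2. The direction from condition~2 to epi-ness is immediate: a binary cover ${a \vee b = d}$ with ${u \cdot a = 1}$ and ${v \cdot b = 1}$ already witnesses the image condition via the trivial partitions ${(a, \bot)}$ of $a$ and ${(\bot, b)}$ of $b$. For the converse, given a cover ${\{d_i\}}$ of $d$ and partitions ${(a_i, b_i)}$ of each $d_i$ with ${u \cdot a_i = 1}$ and ${v \cdot b_i = 1}$, set ${a = \bigvee_i a_i}$ and ${b = \bigvee_i b_i}$; then ${a \vee b = \bigvee_i (a_i \vee b_i) = d}$, and since ${\{a_i\}}$ is a finite cover of $a$ with ${u \cdot a_i = 1}$ for every $i$, the sheaf property of $A$ forces ${u \cdot a = 1}$, and symmetrically ${v \cdot b = 1}$. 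This last collapse---using sheaf uniqueness to pass from an arbitrary cover of partitions back to a single binary cover---is the only delicate point, but it becomes routine once the image of ${[\lambda, \rho]}$ has been made explicit.
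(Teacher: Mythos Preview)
Your proposal is correct and follows essentially the same approach as the paper: both invoke Lemma~\ref{LemCharReallyLocalInCoherentCat}, handle the equalizer condition pointwise, and for the epi condition use the cover $\{a,b\}$ in the forward direction and, in the converse, set $a=\bigvee_i a_i$, $b=\bigvee_i b_i$ and appeal to the sheaf separation property to conclude $u\cdot a=1$ and $v\cdot b=1$. The only cosmetic difference is that you first isolate an explicit description of the image of $[\lambda,\rho]_d$, whereas the paper performs the equivalent restriction computation inline.
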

\begin{proof}
This is just Lemma~\ref{LemCharReallyLocalInCoherentCat} in the particular case of the topos $\Shv(D)$. Indeed, if we let ${E \rightarrow 1}$ be the equalizer of ${0, 1:1 \rightarrow A}$ then ${E d}$ is terminal or initial in $\Set$ depending on whether ${0 = 1}$ in ${A d}$ or not. So, the unique map ${0 \rightarrow E}$ is an iso if and only if the first item in the statement holds.

   To complete the proof we show that the second condition in the statement is equivalent to epiness of the canonical map ${[\lambda, \rho]:A + A \rightarrow \bcu(A)}$  defined in Lemma~\ref{LemCharReallyLocalInCoherentCat}.

   Assume first that the second condition of the present statement holds. To prove that ${[\lambda, \rho]:A + A \rightarrow \bcu(A)}$ is epi, fix ${d\in D}$ and ${(u, v) \in (\bcu(A))d}$. Then ${u + v = 1 \in A d}$ and, by hypothesis, there is a cover ${a\vee b = d}$ such that ${u\cdot a = 1 \in A a}$ and ${v\cdot b = 1 \in A b}$.
   So ${(u, v) \cdot a = (1, v\cdot a) = \lambda_a (v\cdot a) = [\lambda, \rho]_a (in_{0, a} (v\cdot a))}$ where ${in_0:A \rightarrow A + A}$ is the `left' coproduct inclusion and, similarly, ${(u, v) \cdot b = [\lambda, \rho]_b (in_{1, b} (u\cdot b))}$.
   Hence, we have proved that the map ${[\lambda, \rho]:A + A \rightarrow \bcu(A)}$ is locally surjective.

   For the converse  assume that ${[\lambda, \rho]:A + A \rightarrow \bcu(A)}$ is locally surjective. Again, let ${d\in D}$ and ${u, v\in A d}$ be such that ${u + v = 1 \in A d}$.
By assumption there is a cover ${\bigvee_{i\in I} d_i = d}$ and, for each ${i\in I}$, an element ${(a_i, b_i, x_i, y_i)\in (A + A) d_i}$ such that the equation ${[\lambda, \rho]_{d_i}   (a_i, b_i, x_i, y_i) = (u, v) \cdot d_i = (u\cdot d_i , v\cdot d_i)}$ holds.

   If we let ${a = \bigvee_{i\in I} a_i}$ and ${b = \bigvee_{i\in I} b_i}$ then clearly, ${a \vee b = d}$. Now calculate:
\[    (u\cdot d_i \cdot a_i, v\cdot d_i \cdot a_i) = (u\cdot d_i , v\cdot d_i) \cdot a_i = ([\lambda, \rho]_{d_i}   (a_i, b_i, x_i, y_i)) \cdot a_i = \]
\[= [\lambda, \rho]_{a_i}  ( (a_i, b_i, x_i, y_i)\cdot a_i) =  [\lambda, \rho]_{a_i} (a_i, \bot, x_i, 1) = [\lambda, \rho]_{a_i} (in_{0, a_i} x_i) = \lambda_{a_i} x = (1, x_i) \]
and conclude that ${ u \cdot a \cdot a_i = u\cdot d_i \cdot a_i =  1}$ for every ${i\in I}$.
Since $A$ is a sheaf, ${u\cdot a = 1 \in A a}$.
A similar calculation shows that ${v \cdot b = 1 \in A b}$.
\end{proof}

\section{Principal subobjects}
\label{SecPrincipalSubobjecs}

   The content of the present section has probably been considered elsewhere but we have not found it. Let $D$ be a distributive lattice and let ${\Psh{D}}$ be the topos of presheaves on $D$.

\begin{definition}\label{DefPrincipalSub}
For any $X$ in  $\Psh{D}$, a subobject ${U \rightarrow X}$ is called {\em principal} if for every ${d\in D}$ and ${x \in X d}$ there exists a largest ${c\leq d}$ such that ${x\cdot c \in U c}$.
\end{definition}

   We are going to be mainly interested in principal subobjects in ${\Shv(D)}$.

\begin{lemma}\label{LemPrincipalSubobjectsAreClosed} If $X$ is a sheaf and ${u:U \rightarrow X}$ is principal then $U$ is also a sheaf.
\end{lemma}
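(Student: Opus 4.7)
The plan is to verify the sheaf condition for $U$ directly, leveraging that $X$ is already a sheaf and using principality only at the very end to place the amalgamated section inside $U$.

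First I would fix a covering family $(d_i \leq d \mid i \in I)$ in $D$ with $\bigvee_{i \in I} d_i = d$ and a compatible family $(x_i \in U d_i)_{i \in I}$, meaning $x_i \cdot (d_i \wedge d_j) = x_j \cdot (d_i \wedge d_j)$ for all $i, j$. Because $u \colon U \to X$ is a monomorphism and in particular a natural transformation, the images $(u_{d_i} x_i \in X d_i)_{i \in I}$ form a compatible family in $X$. Since $X$ is a sheaf, there exists a unique $x \in X d$ such that $x \cdot d_i = u_{d_i} x_i$ for every $i \in I$.

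Next I would invoke principality of $u \colon U \to X$ applied to this $x \in X d$: there exists a largest $c \leq d$ such that $x \cdot c$ lies in $U c$. For each $i \in I$ we have $x \cdot d_i = u_{d_i} x_i \in U d_i$, so by maximality of $c$ we obtain $d_i \leq c$. Since this holds for every $i$, we conclude $d = \bigvee_{i \in I} d_i \leq c$, hence $c = d$ and therefore $x \in U d$. This $x$ restricts to the original family $(x_i)$ by construction (using monicity of $u$ to reflect equality), and uniqueness of the amalgamation in $U d$ is immediate from uniqueness in $X d$ together with $u$ being mono.

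I do not anticipate a genuine obstacle here: the argument is essentially a bookkeeping exercise, with the only substantive step being the appeal to principality to certify that the $X$-amalgamation already belongs to $U$. One small point to be careful about is that Definition~\ref{DefPrincipalSub} is stated for presheaves, so I would briefly note that it applies verbatim to the subobject $U \to X$ in $\Psh{D}$ before extracting the conclusion that $U$ itself satisfies the sheaf condition, hence belongs to $\Shv(D)$.
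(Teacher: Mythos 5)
Your proof is correct and follows essentially the same route as the paper's: amalgamate in $X$, then use the maximality in the definition of principality to force the largest $c$ with $x\cdot c\in U c$ to contain every $d_i$, hence to equal $d$. The only cosmetic difference is that you work with an arbitrary cover while the paper restricts to a binary cover $a\vee b=d$ (which suffices, as the paper notes earlier when discussing the coherent coverage).
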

\begin{proof}
 Let ${a \vee b = d}$ and let ${x \in U a}$ and ${y \in U b}$ be a compatible family. Then $x$ and $y$ form also a compatible family for $X$ and hence, there exists a unique ${z \in X d}$ such that ${z \cdot a = x}$ and ${z \cdot b = y}$. By hypothesis, there exist a largest ${e \leq d}$ such that ${z \cdot e \in U e}$. Then ${a \leq e}$ and ${b \leq e}$, so ${a \vee b = d \leq e}$. Therefore, ${d = e}$ and $U$ is a sheaf.
\end{proof}

   Let $\Lambda$ in $\Psh{D}$ be the object that sends ${d\in D}$ to ${\below{d}}$. For any ${d\in D}$, ${x \in \Lambda d}$ and ${c \leq  d}$, we have that ${x \cdot c = x \wedge c \in \Lambda c}$.

\begin{lemma}\label{LemLambdaIsAsheaf}
The presheaf $\Lambda$ is a sheaf.
\end{lemma}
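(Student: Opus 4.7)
The plan is to verify the sheaf condition directly. Earlier the paper notes that in $\Shv(D)$ it suffices to check the sheaf condition on binary covers (plus the empty cover of $\bot$), so I would reduce the problem to those two cases.

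First, the empty cover. $\Lambda\bot = \below{\bot} = \{\bot\}$ is a singleton in $\Set$, hence terminal, so the sheaf condition for the empty cover of $\bot$ is satisfied.

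Next, the main case: fix a binary cover ${a \vee b = d}$, and let ${x \in \Lambda a}$, ${y \in \Lambda b}$ be compatible, i.e.\ ${x\cdot(a\wedge b) = y\cdot(a\wedge b)}$. Since ${x \leq a}$ and ${y\leq b}$, compatibility unfolds to ${x \wedge b = x\wedge a\wedge b = y\wedge a\wedge b = y\wedge a}$. My candidate amalgamation is ${z = x \vee y}$. Clearly ${z \leq d}$. Using distributivity and the compatibility equality ${y\wedge a = x\wedge b \leq x}$, one computes
\[ z\wedge a = (x\vee y)\wedge a = (x\wedge a)\vee (y\wedge a) = x \vee (x\wedge b) = x, \]
and symmetrically ${z\wedge b = y}$. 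For uniqueness, if ${z' \leq d}$ satisfies ${z'\wedge a = x}$ and ${z'\wedge b = y}$, then
\[ z' = z'\wedge d = z'\wedge (a\vee b) = (z'\wedge a)\vee (z'\wedge b) = x\vee y = z, \]
so the amalgamation is unique.

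There is no serious obstacle here; the argument is a direct application of the distributive law. The only subtlety worth flagging is that the coherent coverage on $D$ involves arbitrary finite covers, not only binary ones, so one must either invoke the standard reduction to binary covers (plus the empty cover) or carry out the same join-of-elements construction by induction on the size of the cover. I would prefer the former for brevity, citing the remark made just before the statement of the lemma.
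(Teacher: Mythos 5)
Your proof is correct and follows essentially the same route as the paper's: the amalgamation is $x \vee y$, the same distributivity computation establishes $(x\vee y)\wedge a = x$, and uniqueness is forced by $z' = z'\wedge(a\vee b) = x\vee y$. The additional remark about the empty cover and the explicit appeal to the reduction to binary covers merely make explicit what the paper records earlier in the section.
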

\begin{proof}
Let ${a \vee b = d}$ and let ${x \in \Lambda a = \below{a}}$ and ${y \in \Lambda b = \below{b}}$ be a compatible family, so that ${x \cdot (a\wedge b) = y\cdot (a \wedge b)}$.
That is, ${x \wedge b = y \wedge a}$. Then ${x \vee y \in \below{d}}$,
\[ (x\vee y) \cdot a =  (x\vee y) \wedge a = (x \wedge a) \vee (y \wedge a) = x \vee (x\wedge b) = x \]
and, similarly, ${(x\vee y) \cdot b = y}$. Finally, assume that ${z \in \Lambda d}$ is such that ${z\cdot a = x}$ and ${z \cdot b = y}$. That is, ${z \wedge a = x}$ and ${z \wedge b = y}$, so ${(z \wedge a) \vee (z\wedge b) = x \vee y}$ and hence, \[x \vee y = z \wedge (a \vee b) = z \wedge d = z \]
which completes the proof that $\Lambda$ is a sheaf.
\end{proof}

   There is an obvious point ${\top:1 \rightarrow \Lambda}$ that, at stage $d$, is the top element of ${\Lambda d = \below{d}}$.
   Of course, there exists a unique ${\iota:\Lambda \rightarrow \Omega}$ such that the following diagram
$$\xymatrix{
1 \ar[d]_-{\top} \ar[r]^-{id} & 1 \ar[d]^-{\top} \\
\Lambda \ar[r]_-{\iota} & \Omega
}$$
is a pullback in ${\Shv(D)}$. The map ${\iota_d:\Lambda d \rightarrow \Omega d}$ sends ${x \leq d}$ to the principal ideal ${\below{x} \in \Omega d}$.

\begin{lemma}\label{LemClassifierOfPrincipal}
 The subobject ${\top:1 \rightarrow \Lambda}$ in ${\Shv(D)}$ classifies principal subobjects.
\end{lemma}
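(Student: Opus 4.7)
The plan is to exhibit an explicit classifying map for each principal subobject, and to recognize conversely that every subobject of the form $\chi^{-1}(\top)$ for $\chi\colon X \to \Lambda$ is principal. First I would fix a principal subobject ${u\colon U \rightarrowtail X}$ in $\Shv(D)$ and define ${\chi_U \colon X \to \Lambda}$ at stage $d$ by sending ${x \in X d}$ to the largest ${c \leq d}$ with ${x\cdot c \in U c}$; by principality this element of ${\below{d} = \Lambda d}$ exists. The first routine verification is naturality: for ${e\leq d}$, the inclusion ${\chi_U(x)\wedge e \leq \chi_U(x\cdot e)}$ follows from ${(x \cdot \chi_U(x))\cdot e = x \cdot (\chi_U(x)\wedge e) \in U(\chi_U(x)\wedge e)}$, and the reverse inequality follows from the maximality clause of $\chi_U(x)$ applied to the element ${x \cdot \chi_U(x\cdot e) = (x\cdot e)\cdot \chi_U(x \cdot e) \in U\chi_U(x\cdot e)}$.

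Next I would check that the square
$$\xymatrix{
U \ar[d]_-{u} \ar[r]^-{!} & 1 \ar[d]^-{\top} \\
X \ar[r]_-{\chi_U} & \Lambda
}$$
is a pullback. At stage $d$ the pullback is ${\{x \in X d \mid \chi_U(x) = d\}}$, and by the defining property of $\chi_U$ this is precisely ${U d}$.

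For uniqueness, suppose ${\chi\colon X \to \Lambda}$ also classifies $u$. Given ${x\in X d}$, naturality forces ${\chi(x\cdot \chi(x)) = \chi(x) \wedge \chi(x) = \chi(x)}$, so ${x \cdot \chi(x)}$ lies in the fibre of $\top$ over itself at stage $\chi(x)$, i.e.\ in ${U \chi(x)}$; hence ${\chi(x) \leq \chi_U(x)}$. In the other direction, ${x\cdot \chi_U(x) \in U\chi_U(x)}$ gives ${\chi(x \cdot \chi_U(x)) = \chi_U(x)}$, so by naturality ${\chi(x) \wedge \chi_U(x) = \chi_U(x)}$ and thus ${\chi_U(x) \leq \chi(x)}$.

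Finally, for the converse half, suppose ${u\colon U\rightarrowtail X}$ arises as the pullback of $\top$ along some ${\chi\colon X\to \Lambda}$; then ${U d = \{x\in X d \mid \chi(x) = d\}}$. For ${x \in X d}$ I claim ${\chi(x)}$ itself witnesses principality: the computation ${\chi(x\cdot \chi(x)) = \chi(x)\wedge\chi(x) = \chi(x)}$ shows ${x\cdot \chi(x) \in U\chi(x)}$, and if ${c\leq d}$ satisfies ${x\cdot c \in U c}$ then ${\chi(x\cdot c) = c}$, so ${\chi(x)\wedge c = c}$, i.e.\ ${c\leq \chi(x)}$. The only delicate point in the whole argument is checking that $\chi_U$ is natural, but it reduces to the two inequalities sketched above together with the presheaf functoriality of $U \hookrightarrow X$; no appeal to the sheaf condition on $X$ is needed beyond the bookkeeping provided by Lemma~\ref{LemLambdaIsAsheaf} and Lemma~\ref{LemPrincipalSubobjectsAreClosed}.
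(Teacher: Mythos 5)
Your proof is correct, and it takes a genuinely different route from the paper's. The paper works through the subobject classifier $\Omega$ of $\Shv(D)$: it recalls the standard formula for the characteristic map of a mono, $\chi_d(x) = \{c \leq d \mid x\cdot c \in U c\}$ (an ideal of $\below{d}$), and then simply observes that this factors through the mono $\iota\colon \Lambda\to\Omega$ precisely when that ideal is principal for every $x$ and $d$, which is verbatim Definition~\ref{DefPrincipalSub}; existence and uniqueness of the classifying map to $\Lambda$ are then free, inherited from those of $\Omega$ and the monicity of $\iota$. You instead reconstruct the classifier property of $\Lambda$ from scratch: you build the map $\chi_U$ directly into $\Lambda$, verify naturality via the two inequalities (which are correct), verify the pullback square stagewise, prove uniqueness by the two-sided argument using naturality of any competing $\chi$, and finally check the converse (that any pullback of $\top$ is principal). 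This is more work but avoids any appeal to the explicit description of $\Omega$ in a sheaf topos; it is essentially a self-contained proof of a universal property, whereas the paper's is a one-line corollary of a stronger universal property already known to hold. Your closing remark is also apt: your argument is really a presheaf-level statement, and the sheaf hypotheses only ensure that $\Lambda$ and $U$ live in $\Shv(D)$ so the statement can be read there.
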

\begin{proof}
Let ${u:U \rightarrow X}$ be mono in ${\Shv(D)}$. Its classifying map ${\chi:X \rightarrow  \Omega}$ sends $x$ to the ideal ${\chi x = \{c \leq d \mid x\cdot c \in U c\}}$. Hence, ${\chi}$ factors through  ${\iota:\Lambda \rightarrow \Omega}$ if and only if ${u:U \rightarrow X}$ is principal.
\end{proof}

   In other words, principal subobjects are those  whose characteristic maps are valued in principal ideals.

\begin{lemma}\label{LemIotaIsArigMap}
The object $\Lambda$ is a really local distributive lattice in ${\Shv(D)}$ with top element ${\top:1 \rightarrow \Lambda}$. Moreover, the mono ${\iota:\Lambda \rightarrow \Omega}$ is a rig morphism.
\end{lemma}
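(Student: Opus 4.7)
The plan is to verify in turn that (i) $\Lambda$ is an internal distributive lattice in $\Shv(D)$ with top $\top:1\to\Lambda$, (ii) it is really local, and (iii) $\iota$ is a rig morphism.

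For (i) I work stage by stage. At each $d\in D$, $\below{d}$ is a bounded distributive lattice with top $d$, bottom $\bot$, and operations inherited from $D$. For $c\leq d$, the restriction $x\mapsto x\wedge c:\below{d}\to\below{c}$ preserves $\wedge$ and $\bot$ trivially, preserves $\vee$ by distributivity in $D$, and sends $d$ to $c$ because $c\leq d$ gives $d\wedge c = c$. Hence meet, join, $0$, and the chosen top assemble into natural transformations, so $\Lambda$ is an internal rig in $\Shv(D)$ (sheafhood is Lemma~\ref{LemLambdaIsAsheaf}) in which $1 + x = 1$ and $x\cdot x = x$ hold pointwise, i.e., a distributive lattice; $\top:1\to\Lambda$ sends the unique stage-$d$ element to $d\in\below{d}$, its naturality amounting once more to $d\wedge c = c$ for $c\leq d$.

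For (ii) I apply Proposition~\ref{PropCharReallyLocal}. Its first clause reduces to the tautology that $\bot = d$ in $\below{d}$ forces $d=\bot$. For the second, given $u,v\in\below{d}$ with $u\vee v = d$ I take the cover $a := u$, $b := v$ of $d$; then $u\cdot a = u\wedge u = u$ is exactly the top element of $\Lambda u$, and similarly $v\cdot b = v$ is the top of $\Lambda v$, so the required cover exists.

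For (iii) I first unfold $\iota$ using Lemma~\ref{LemClassifierOfPrincipal}: since $\iota$ classifies the principal subobject $\top:1\to\Lambda$, its stage-$d$ component sends $x\in\below{d}$ to $\{c\leq d : x\cdot c = c\} = \{c\leq d : c\leq x\} = \below{x}$, recovering the description given just before the lemma. Preservation of $\bot$, $\top$, and $\wedge$ is then immediate: $\below{\bot}=\{\bot\}$ is the zero ideal on $\below{d}$, $\below{d}$ is the maximal ideal, and $\below{x}\cap\below{y} = \below{x\wedge y}$. For $\vee$, recall that the join in $\Omega d$ is the ideal of $\below{d}$ generated by the union; since $\below{x}\cup\below{y}$ is already downward closed, closing under finite joins produces exactly $\below{x\vee y}$. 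I do not anticipate a serious obstacle: the only points that need care are the identification of $\Omega d$ with the ideals of $\below{d}$ together with its lattice operations, and keeping in mind that the top element of $\Lambda$ must be treated as a non-constant natural transformation $\top:1\to\Lambda$ (value $d$ at stage $d$) rather than a global constant.
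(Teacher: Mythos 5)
Your proposal is correct, but it organizes the argument differently from the paper on one point. You establish really-locality of $\Lambda$ directly, as a separate fact, by verifying the two conditions of Proposition~\ref{PropCharReallyLocal} (taking the cover $a=u$, $b=v$, which indeed makes $u\cdot a$ and $v\cdot b$ the respective top elements). The paper instead verifies only that $\iota_d$ preserves top, bottom, meet, and join — exactly your step (iii) — and then obtains really-locality for free: $\iota$ is a mono, hence a local rig morphism from $\Lambda$ to $\Omega$, and by the remark following Definition~\ref{DefReallyLocal} the existence of a local rig map to $\Omega$ is precisely what it means for $\Lambda$ to be really local. (Monicity does the job here because both $\Lambda$ and $\Omega$ are integral, so locality of $\iota$ reduces to ``$\iota_d x = \top$ implies $x = \top$''.) Your route is more concrete and reproves a little of what the abstract definition already gives; the paper's is more economical, deriving both conclusions from the single computation that $\iota$ is a rig map. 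Either is a valid proof, and your verification of $\iota_d$ preserving the lattice operations (including the point that $\below{a}\cup\below{b}$ generates exactly $\below{(a\vee b)}$) matches the paper's.
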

\begin{proof}
It is clear that ${\Lambda d}$ is a lattice for each $d$ and that the action of the presheaf preserves this structure. So $\Lambda$ is indeed a distributive lattice in ${\Shv(D)}$. To complete the proof it is enough to check that ${\iota_d:\below{d}\rightarrow \Omega d}$ is a morphism of rigs. It is clear that ${\iota_d:\below{d}\rightarrow \Omega d}$ preserves top and bottom element. It is also clear that for ${a, b \leq d}$, \[ \iota_d (a \wedge b) = \below{(a\wedge b)} = \below{a} \cap \below{b} = (\iota_d a) \wedge (\iota_d b) \in \Omega d \]
and, moreover, ${\below{(a\vee b)}}$ is the least ideal in $D$  containing the lower set ${\below{a} \cup \below{b}}$ in $D$ so ${\below{a} \vee \below{b} = \below{(a\vee b)} \in \Omega d}$. Hence, ${\iota_d (a\vee b) = (\iota_d a) \vee (\iota_d b) \in \Omega d}$.
   That is, we have a rig morphism ${\iota:\Lambda \rightarrow \Omega}$. Since it is mono, it is local and hence, ${\Lambda}$ is a really local rig.
\end{proof}

   Surely, ${\Lambda}$ is the internal distributive lattice of compact open sublocales of the internal locale determined by $\Omega$, but we will not need this fact.

\begin{lemma}\label{LemDefAcutelyLocal}
If $X$ is an integral rig in ${\Shv(D)}$ then the following are equivalent.
\begin{enumerate}
\item The rig $X$ is really local and ${1:1\rightarrow X}$ is principal.
\item The rig $X$ is really local and for every ${d\in D}$ and ${x \in X d}$ there exists a largest ${c\leq d}$ such that ${x\cdot c = 1 \in X c}$.
\item There is a local morphism of rigs ${X \rightarrow \Lambda}$.
\end{enumerate}
Moreover, in case the above holds, the map ${X \rightarrow \Lambda}$ is unique.
\end{lemma}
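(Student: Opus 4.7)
The plan is to recognize first that (1) and (2) are literally the same statement, then construct the map in (3) explicitly from the data in (2), and finally close the loop via Lemma~\ref{LemClassifierOfPrincipal}. Indeed, applying Definition~\ref{DefPrincipalSub} to the subobject ${1 : 1 \to X}$, for which ${(1) c = \{1\} \subseteq X c}$, the condition for principality is exactly the condition in (2).

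For (2) $\Rightarrow$ (3), I would define ${\chi : X \to \Lambda}$ pointwise by
\[ \chi_d(x) := \max\{c \leq d \mid x \cdot c = 1 \in X c\} \in \below{d} = \Lambda d, \]
which exists by hypothesis. Naturality in $d$ follows from a standard double inequality using maximality. To verify that $\chi$ is a rig morphism: preservation of the multiplicative unit is immediate from ${\chi_d(1_d) = d = \top_d}$; preservation of multiplication reduces to the identity ${\chi_d(x y) = \chi_d(x) \wedge \chi_d(y)}$, which uses that in an integral rig ${a b = 1}$ forces ${a = b = 1}$ (by Lemma~\ref{LemDividesImpliesAbove}); and preservation of $0$ follows from Proposition~\ref{PropCharReallyLocal}(1), forcing ${\chi_d(0) = \bot}$. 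Locality of $\chi$ is then automatic once $\chi$ is known to be a rig morphism, since Lemma~\ref{LemIntegralImpliesInvertiblesAreTrivial} gives ${\Inv X = \Inv \Lambda = 1}$, and the pullback of ${\top : 1 \to \Lambda}$ along $\chi$ is ${1 : 1 \to X}$ by construction.

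The main obstacle is the preservation of addition. For the inequality ${\chi_d(x) \vee \chi_d(y) \leq \chi_d(x + y)}$, I would restrict $x + y$ to each of $\chi_d(x)$ and $\chi_d(y)$ separately (both restrictions evaluate to $1$ by integrality, since ${1 + v = 1}$), and then glue using that $X$ is a sheaf, with the constant section $1$ providing the unique amalgamation on ${\chi_d(x) \vee \chi_d(y)}$. For the reverse inequality, setting ${c = \chi_d(x + y)}$, I would apply Proposition~\ref{PropCharReallyLocal}(2) to the pair ${x \cdot c, y \cdot c \in X c}$ (whose sum is $1$ in $X c$) to obtain a cover ${a \vee b = c}$ with ${x \cdot a = 1}$ and ${y \cdot b = 1}$; then ${a \leq \chi_d(x)}$ and ${b \leq \chi_d(y)}$ by maximality, so ${c = a \vee b \leq \chi_d(x) \vee \chi_d(y)}$.

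Finally, for (3) $\Rightarrow$ (1) and uniqueness: given a local rig morphism ${\chi : X \to \Lambda}$, composing with the local rig morphism ${\iota : \Lambda \to \Omega}$ provided by Lemma~\ref{LemIotaIsArigMap} yields a local rig morphism ${X \to \Omega}$, so $X$ is really local by Definition~\ref{DefReallyLocal}. Locality of $\chi$ together with ${\Inv X = \Inv \Lambda = 1}$ expresses ${1 : 1 \to X}$ as the pullback of ${\top : 1 \to \Lambda}$ along $\chi$, making ${1 : 1 \to X}$ principal by Lemma~\ref{LemClassifierOfPrincipal}. The same lemma yields uniqueness: any local rig morphism ${X \to \Lambda}$ must serve as the classifying map of the principal subobject ${1 : 1 \to X}$, and classifying maps of subobjects are unique.
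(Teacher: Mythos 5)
Your proposal is correct, but it takes a genuinely different route from the paper's. The paper's argument is purely conceptual and avoids any element-level computation: it observes that the characteristic map ${\chi\colon X\to\Omega}$ of ${1\colon 1\to X}$ factors through the mono ${\iota\colon\Lambda\to\Omega}$ precisely when ${1\colon 1\to X}$ is principal (Lemma~\ref{LemClassifierOfPrincipal}), that $\chi$ is a rig morphism precisely when $X$ is really local (Definition~\ref{DefReallyLocal}), and then deduces that the factorization ${X\to\Lambda}$ is a rig morphism for free, because $\iota$ is a \emph{monic rig morphism} (Lemma~\ref{LemIotaIsArigMap}) and rig-morphism-ness is reflected along monos. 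Locality of ${X\to\Lambda}$ and the converse then fall out from the pullback pasting lemma. You instead construct ${\chi_d(x)=\max\{c\le d\mid x\cdot c=1\in Xc\}}$ explicitly and verify the rig axioms by hand, invoking Proposition~\ref{PropCharReallyLocal}(1) for $0$ and a two-sided inequality argument (sheaf gluing for $\le$, part (2) of that proposition for $\ge$) for $+$. Your verification is complete and sound, and it has the pedagogical merit of exhibiting the classifying map concretely and showing exactly where each clause of ``really local'' is used; but it is longer, depends on the site-specific characterization of Proposition~\ref{PropCharReallyLocal}, and redoes work that the mono-reflection argument gives automatically. The paper's proof is shorter, works directly from the definitions of really local and principal, and makes the factorization-through-$\Lambda$ structure do all the heavy lifting.
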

\begin{proof}
The first two items are clearly equivalent so it is enough to show that the first and third items are equivalent. Let ${\chi:X \rightarrow \Omega}$ be the characteristic map of ${1:1 \rightarrow X}$. If the subobject ${1:1 \rightarrow X}$ is principal, $\chi$ factors through the mono ${\Lambda \rightarrow \Omega}$ and so there is a (unique) map ${X \rightarrow \Lambda}$ such that the square on the left below
$$\xymatrix{
1 \ar[d] \ar[r]^-{id} & 1 \ar[d]^-{\top} \ar[r]^-{id} & 1 \ar[d]^-{\top} \\
X \ar@/_1pc/[rr]_-{\chi} \ar[r] & \Lambda \ar[r] & \Omega
}$$
is a pullback. Since $X$ is really local, ${\chi:X \rightarrow \Omega}$ is a morphism of rigs. As the inclusion ${\Lambda\rightarrow\Omega}$ is also a morphism of rigs then the factorization ${X \rightarrow \Lambda}$ is also a morphism of rigs and it is local because the left square above is a pullback. On the other hand, if there is a local morphism of rigs ${X \rightarrow \Lambda}$ then the left square above is a pullback. So the subobject ${1 \rightarrow X}$ is principal and the rectangle is a pullback. Therefore the composite ${X \rightarrow \Lambda \rightarrow \Omega}$ morphism of rigs equals $\chi$ and hence  $X$ is really local. Finally, since ${X \rightarrow \Lambda}$ is the characteristic map of the principal subobject ${1:1 \rightarrow X}$, it is unique.
\end{proof}

\section{The category of representations}

   In this section we will write ${\top:1 \rightarrow \Lambda_D}$ for the classifier of principal subobjects in ${\Shv(D)}$, with $D$ a distributive lattice.
     This more explicit notation is necessary because we need to consider different lattices at the same time.

\begin{definition}\label{DefRepresentation}
A {\em representation (of an integral rig)} is a pair ${(D, X)}$ consisting of a distributive lattice $D$ and an integral rig $X$ in ${\Shv(D)}$ satisfying the equivalent conditions of Lemma~\ref{LemDefAcutelyLocal}.
\end{definition}

   It follows from Lemma~\ref{LemDefAcutelyLocal} that every representation ${(D, X)}$ determines a local map of rigs that we denote by ${\chi:X \rightarrow \Lambda_D}$.
   If necessary, to avoid confusion, we may denote it by ${\chi_X:X \rightarrow \Lambda_D}$.

   We now define a category ${\frakI}$ whose objects are representations in the sense above.
To describe the arrows in $\frakI$ first recall that any rig map ${f:D \rightarrow E}$ between distributive lattices induces a functor ${f_*:\Shv(E) \rightarrow \Shv(D)}$ that sends $Y$ in ${\Shv(E)}$ to the composite
$$\xymatrix{
\opCat{D} \ar[r]^-{\opCat{f}} & \opCat{E} \ar[r]^-{Y} & \Set
}$$
which lies in ${\Shv(D)}$. See Theorem~{VII.10.2} in \cite{maclane2}. Moreover, the functor ${f_*}$ is the direct image of a geometric morphism ${\Shv(E) \rightarrow \Shv(D)}$ so it sends integral rigs in the domain to integral rigs in the codomain.

   The map ${f:D \rightarrow E}$ also determines a morphism ${f':\Lambda_D \rightarrow f_* \Lambda_E}$ of lattices in ${\Shv(D)}$ such that for each ${d\in D}$, ${f'_d:\Lambda_D d = \below{d} \rightarrow (f_* \Lambda_E) d = \Lambda_E (f d) = \below{f d}}$ sends ${a\leq d}$ to ${f a \leq f d}$. Now that we have made this explicit, it is convenient to write ${f:\Lambda_D \rightarrow f_* \Lambda_E}$ and forget about the $f'$ notation.

   We can now define the maps in $\frakI$. For representations ${(D, X)}$ and ${(E, Y)}$, a map ${(D, X) \rightarrow (E, Y)}$ in $\frakI$ is a pair ${(f, \phi)}$ with ${f:D \rightarrow E}$ and ${\phi:X \rightarrow f_* Y}$ rig maps such that the following diagram
$$\xymatrix{
X \ar[d]_-{\chi} \ar[r]^-{\phi} & f_* Y \ar[d]^-{f_* \chi} \\
\Lambda_D \ar[r]_-{f} & f_* \Lambda_E
}$$
commutes in ${\Shv(D)}$. We emphasize that ${f:D \rightarrow E}$ is a morphism in ${\dLat(\Set)}$ and ${\phi:X \rightarrow f_* Y}$ is a morphism  in ${\iRig(\Shv(D))}$.

        If ${(f, \phi):(D, X) \rightarrow (E, Y)}$ and ${(g, \gamma):(C, W) \rightarrow (D, X)}$ are maps in $\frakI$ then
$$\xymatrix{ W \ar[r]^-{\gamma} & g_* X \ar[r]^-{g_* \phi} & g_*(f_* Y) = (f g)_* Y}$$
so we can define the {\em composite} ${ (f, \phi) (g, \gamma):(C, W) \rightarrow (E, Y)}$ as the pair ${(f g, (g_* \phi) \gamma)}$.

\begin{lemma} Composites of maps in $\frakI$ are maps in $\frakI$.
\end{lemma}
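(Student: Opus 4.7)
The strategy is to paste the compatibility square for $(g,\gamma)$ with the $g_*$-image of the compatibility square for $(f,\phi)$, and recognise the result as the compatibility square required of the composite $(fg,(g_*\phi)\gamma)$. Since $g_*:\Shv(D)\rightarrow\Shv(C)$ is the direct image of a geometric morphism it preserves finite limits, so it carries rig morphisms to rig morphisms, and by its definition on functors it satisfies $g_* f_* = (fg)_*$. Pasting yields the diagram
$$\xymatrix{
W \ar[d]_-{\chi_W} \ar[r]^-{\gamma} & g_* X \ar[d]^-{g_* \chi_X} \ar[r]^-{g_* \phi} & (fg)_* Y \ar[d]^-{(fg)_* \chi_Y} \\
\Lambda_C \ar[r]_-{g} & g_* \Lambda_D \ar[r]_-{g_* f} & (fg)_* \Lambda_E
}$$
in $\Shv(C)$, both of whose squares commute by hypothesis.

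By definition of composition in $\frakI$ the top composite is $(g_*\phi)\gamma$ and the right vertical is $(fg)_*\chi_Y$, so it remains to identify the bottom composite $(g_* f)\circ g$ with the canonical morphism $fg:\Lambda_C\rightarrow (fg)_*\Lambda_E$ induced by the composed lattice map $fg:C\rightarrow E$ as in the preamble of the definition of $\frakI$. This is a pointwise functoriality check: at stage $c\in C$, an element $a\leq c$ of $\Lambda_C c=\below{c}$ is sent along the $c$-component of $g:\Lambda_C\rightarrow g_*\Lambda_D$ to $ga\in\below{gc}=(g_*\Lambda_D)c$, and then along the $c$-component of $g_*f$, which is the $gc$-component $f_{gc}:\below{gc}\rightarrow\below{fgc}$ of $f:\Lambda_D\rightarrow f_*\Lambda_E$, to $f(ga)=(fg)a$. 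This matches the action of the canonical morphism associated to $fg$, giving the required equality.

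Finally, $\gamma$ is a rig morphism by hypothesis and $g_*\phi$ is the image under the finite-limit-preserving $g_*$ of the rig morphism $\phi$, so $(g_*\phi)\gamma$ is a morphism in $\iRig(\Shv(C))$. The only mildly delicate point in the whole argument is purely notational: the symbol $f$ does double duty as the lattice morphism $f:D\rightarrow E$ and as the induced sheaf morphism $f:\Lambda_D\rightarrow f_*\Lambda_E$, and similarly for $g$ and $fg$; one must take care to distinguish these when verifying that the two possible readings of $fg:\Lambda_C\rightarrow(fg)_*\Lambda_E$ agree, which is exactly what the functoriality check above provides.
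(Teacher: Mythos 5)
Your proof is correct and follows essentially the same route as the paper: paste the two compatibility squares, invoke $g_*f_*=(fg)_*$, and identify the bottom composite with the canonical map induced by $fg$. The paper states that last identification as ``straightforward,'' whereas you spell out the pointwise functoriality check, which is a welcome bit of extra detail.
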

\begin{proof}
We use the notation above. First notice that the following rectangle
$$\xymatrix{
W \ar[r]^-{\gamma} \ar[d]_-{\chi} & g_* X \ar[d]^-{g_* \chi} \ar[r]^-{g_* \phi} & g_*(f_* Y) \ar[d]^-{g_* (f_* \chi)} \ar[r]^-{id} & (f g)_* Y \ar[d]^-{(f g)_* \chi} \\
\Lambda_C \ar[r]_-g & g_* \Lambda_D \ar[r]_-{g_* f} & g_*(f_* \Lambda_E) \ar[r]_-{id} & (f g)_* \Lambda_E
}$$
commutes in ${\Shv(C)}$. Indeed, the left-most square commutes because ${(g, \gamma)}$ is an map in $\frakI$; the middle square commutes because ${(f, \phi)}$ is a map in $\frakI$ and ${g_*}$ is a functor; and the right-most square commutes because  ${g_* f_* = (f g)_*:\Shv(E) \rightarrow \Shv(C)}$. It remains to show that the bottom composite is the map ${\Lambda_C \rightarrow (f g)_* \Lambda_E}$ induced by ${f g:C \rightarrow D}$; but this is straightforward.
\end{proof}

   For any object ${(D, X)}$ in $\frakI$ it is easy to check that the pair ${(id_D:D \rightarrow D, id_X:X \rightarrow X)}$ is a map in $\frakI$ that will be called the {\em identity}  on ${(D, X)}$.

\begin{lemma}
Composition of maps as defined above determines a category $\frakI$.
\end{lemma}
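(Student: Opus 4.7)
The plan is to verify the two remaining category axioms, the unit laws and associativity. (That the composite of two composable $\frakI$-morphisms is itself a $\frakI$-morphism has just been established in the preceding lemma.) Both laws reduce to direct calculations on the two coordinates of a morphism, once one recalls that for any $f:D \rightarrow E$ in $\dLat$ the functor $f_*:\Shv(E) \rightarrow \Shv(D)$ is defined as precomposition with $\opCat{f}$. This means that the assignment $f \mapsto f_*$ is \emph{strictly} (not merely pseudo-) functorial: $(id_D)_*$ is literally the identity functor on $\Shv(D)$; the equality $(gh)_* = h_* g_*$ holds strictly for composable $h,g$ in $\dLat$; and each $f_*$ preserves identities and composition of natural transformations.

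For the unit laws, fix $(f, \phi):(D,X) \rightarrow (E,Y)$ and compute
\[ (f, \phi) \circ (id_D, id_X) = (f\,id_D,\ ((id_D)_* \phi)\,id_X) = (f, \phi), \]
\[ (id_E, id_Y) \circ (f, \phi) = (id_E\, f,\ (f_* id_Y)\,\phi) = (f, \phi), \]
using the strict functoriality just recalled together with the unit laws in $\dLat$.

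For associativity, consider three composable morphisms $(h, \delta):(B,V) \rightarrow (C, W)$, $(g, \gamma):(C, W) \rightarrow (D, X)$ and $(f, \phi):(D, X) \rightarrow (E, Y)$. Both bracketings produce first coordinate $fgh$ by associativity in $\dLat$. Unfolding the definition of composition in $\frakI$, the bracketing $((f,\phi) \circ (g, \gamma)) \circ (h, \delta)$ has second coordinate $(h_*(g_* \phi))\,(h_* \gamma)\,\delta$, while $(f, \phi) \circ ((g, \gamma) \circ (h, \delta))$ has second coordinate $((gh)_* \phi)\,(h_* \gamma)\,\delta$. These agree by the strict identity $(gh)_* = h_* g_*$ and by the functoriality of $h_*$ on natural transformations.

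The only step requiring any care is the bookkeeping around the equality $(gh)_* = h_* g_*$, but since $f_*$ is literally precomposition with $\opCat{f}$ this is a strict identity rather than a coherence isomorphism, and the argument is entirely routine; no genuine obstacle arises.
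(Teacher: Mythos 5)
Your proof is correct, and it fills in precisely the verification that the paper declares "We leave the details for the reader." The key observation you single out — that $f \mapsto f_*$ (precomposition with $\opCat{f}$) is strictly functorial, so $(id_D)_* = id$ and $(gh)_* = h_* g_*$ hold on the nose rather than up to coherence isomorphism — is exactly what makes the unit and associativity checks purely mechanical; this is the same underlying reason the paper feels entitled to omit them.
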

\begin{proof}
It remains to show that composition is associative and that identities are neutral with respect to it. We leave the details for the reader.
\end{proof}

   For each ${(D, X)}$ in $\frakI$ define ${\Gamma(D, X) = X \top}$ and, for ${(f, \phi):(D, X) \rightarrow (E, Y)}$ in $\frakI$, define ${\Gamma (f, \phi) = \phi_{\top}: D \top \rightarrow (f_* Y) \top = Y (f\top) = Y\top}$. It is very easy to prove that this induces a functor ${\Gamma:\frakI \rightarrow \iRig}$.

\section{The representation of integral rigs}

    Recall that ${L:\iRig \rightarrow \dLat}$ is the left adjoint to the (nameless) full inclusion in the opposite direction and that the unit of the associated adjunction is denoted by ${\eta:Id \rightarrow L}$.

    Fix and integral rig $A$ in $\Set$ and its reticulation ${\eta:A \rightarrow L A}$.
    Let ${\Psh{L A} = \Set^{\opCat{(L A)}}}$ be the topos of presheaves on ${L A}$. We now explain how the rig $A$ in $\Sets$ determines a integral rig $\overline{A}$ in $\Psh{L A}$.

\begin{lemma}\label{LemLocalizationReticulation}
For any ${x, y \in A}$, if ${\eta x \leq \eta y \in L A}$ then there is a (necessarily unique) map ${A[y^{-1}] \rightarrow A[x^{-1}]}$ making the following diagram
$$\xymatrix{
A \ar[r] \ar[rd] & A[y^{-1}] \ar[d] \\
                 & A[x^{-1}]
}$$
commute, where the horizontal and diagonal arrows are the respective localizations.
Hence, if ${\eta x = \eta y}$ then ${A[x^{-1}]}$ is canonically iso to ${A[y^{-1}]}$.
\end{lemma}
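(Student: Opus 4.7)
The plan is to unwind the hypothesis $\eta x \leq \eta y$ in $LA$ using the description of the reticulation given in Lemma~\ref{LemLattification}, and then to show that the localization map $A \rightarrow A[x^{-1}]$ already inverts $y$, so the desired factorization arises from the universal property of $A \rightarrow A[y^{-1}]$.

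First, I would note that since ${LA = A/{\sim}}$ has its join-semilattice structure induced by addition in $A$, the inequality $\eta x \leq \eta y$ in $LA$ is equivalent to $\eta(x + y) = \eta y$, i.e.\ to $(x + y) \sim y$. The direction $y \preceq x + y$ is automatic from $y \leq x + y$, so the content is $(x + y) \preceq y$: there exists $m \in \Nat$ with $(x + y)^m \leq y$. Since $x \leq x + y$ and multiplication is monotone, we get $x^m \leq (x + y)^m \leq y$.

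Next, I would invoke Lemma~\ref{LemSubMultiplicationsCanBeCollapsed} to describe ${A \rightarrow A[x^{-1}]}$ as the quotient by the congruence $\equiv_F$ for ${F = \{x^n \mid n \in \Nat\}}$, and check that $y \equiv_F 1$ in $A$. Indeed $1 \mid_F y$ holds because $x^m \cdot 1 = x^m \leq y$ by the previous step, and $y \mid_F 1$ is automatic since every element of an integral rig satisfies ${1\cdot y \leq 1}$. Hence $y = 1$ in $A[x^{-1}]$, so the image of $y$ is certainly invertible there. By the universal property of ${A \rightarrow A[y^{-1}]}$ this yields a unique map ${A[y^{-1}] \rightarrow A[x^{-1}]}$ making the stated triangle commute.

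For the final ``hence'' statement, if $\eta x = \eta y$ then both $\eta x \leq \eta y$ and $\eta y \leq \eta x$ hold, giving rig maps $A[y^{-1}] \rightarrow A[x^{-1}]$ and $A[x^{-1}] \rightarrow A[y^{-1}]$ compatible with the localization maps from $A$. The uniqueness part of the universal property of each localization forces both composites to be the respective identities, producing the canonical isomorphism. The only mildly delicate step is the passage from $\eta x \leq \eta y$ to $x^m \leq y$; once that is in hand, the rest is a direct application of the universal property of localization at a single element.
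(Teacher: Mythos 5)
Your proof is correct and follows essentially the same route as the paper's: translate $\eta x \leq \eta y$ into a concrete inequality $x^m \leq y$ in $A$ (the paper does this via $\eta w + \eta x = \eta y$; you take the cleaner route $\eta(x+y)=\eta y$, which amounts to the same thing), then conclude that $y$ maps to $1$ in $A[x^{-1}]$ and invoke the universal property of $A \to A[y^{-1}]$. The only cosmetic difference is that you spell out $y \equiv_F 1$ using both directions of $\mid_F$, whereas the paper simply observes $1 = (x^m \bmod x) \leq (y \bmod x)$ and appeals to integrality to get $y = 1$ in $A[x^{-1}]$.
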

\begin{proof}
If ${\eta x \leq \eta y}$ in ${L A}$,  there is a ${w \in A}$ such that ${\eta w + \eta x  = \eta y}$. That is, ${\eta (w + x) = \eta y}$ so there exists an ${m\in \Nat}$ such that ${(w + x)^m \leq y}$ and hence ${x^m \leq (w + x)^m \leq y}$. But then, using the notation of Lemma~\ref{LemPBPO},  ${1 = (x^m \bmod x) \leq (y \bmod x) \in A[x^{-1}]}$.
\end{proof}

   It follows that the assignment that sends ${\eta x \in L A}$ to ${A[x^{-1}]}$ is well defined. Moreover, if ${\eta x \leq \eta y}$ then we have a canonical morphism ${A[y^{-1}] \rightarrow A[x^{-1}]}$ and it easy to check that we obtain a functor ${\opCat{(L A)} \rightarrow \iRig}$. In other words, we obtain an integral rig ${\overline{A}}$ in the presheaf topos ${\Psh{L A}}$.

   Notice that if $A$ is a distributive lattice then $\overline{A}$ coincides with the classifer ${\Lambda_{L A}}$ of principal subobjects in ${\Shv(L A)}$. So the following result may be seen as a generalization of Lemma~\ref{LemLambdaIsAsheaf}.

\begin{lemma}\label{LemNewIntegralSheaf} For any integral rig $A$ in $\Set$, the presheaf ${\overline{A}}$ in ${\Psh{L A}}$ is a sheaf (for the coherent coverage on the lattice ${L A}$).
\end{lemma}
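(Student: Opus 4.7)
The plan is to exploit the pushout--pullback lemma (Lemma~\ref{LemPBPO}) directly. As noted in the discussion of the coherent coverage, it suffices to check the sheaf condition for the empty cover of ${\bot}$ and for binary covers ${a \vee b = d}$ in ${L A}$.

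For the empty cover: the bottom element of ${L A}$ is ${\eta 0}$, so ${\overline{A}(\bot) = A[0^{-1}]}$. Since $0$ is trivially nilpotent, Lemma~\ref{LemTrivialLocalization} gives ${0 = 1}$ in ${A[0^{-1}]}$, so this rig is terminal in $\iRig$ and hence its underlying set is a singleton, as required.

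For a binary cover ${a \vee b = d}$ in ${L A}$, choose representatives ${x, y \in A}$ with ${\eta x = a}$ and ${\eta y = b}$. Since $\eta$ is a rig morphism and $L A$ is a distributive lattice (so meet coincides with multiplication), we have ${\eta(x + y) = a \vee b = d}$ and ${\eta(x y) = a \wedge b}$. By the definition of $\overline{A}$ (and Lemma~\ref{LemLocalizationReticulation}, which guarantees independence from the choice of representatives up to canonical iso), the sheaf square for the cover ${a \vee b = d}$ is
$$\xymatrix{
\overline{A}(d) \ar[d] \ar[r] & \overline{A}(b) \ar[d] \\
\overline{A}(a) \ar[r] & \overline{A}(a \wedge b)
}$$
which is, up to those canonical isomorphisms, the square
$$\xymatrix{
A[(x+y)^{-1}] \ar[d] \ar[r] & A[y^{-1}] \ar[d] \\
A[x^{-1}] \ar[r] & A[(x y)^{-1}]
}$$
appearing in Lemma~\ref{LemPBPO}, whose horizontal and vertical maps are the canonical localization comparisons. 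That lemma states precisely that this square is a pullback in $\iRig$, so the sheaf condition holds.

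The main subtlety (and only real obstacle) is bookkeeping: one must verify that the maps in the square produced from Lemma~\ref{LemPBPO} agree with the restriction maps of the presheaf $\overline{A}$ under the canonical isomorphisms of Lemma~\ref{LemLocalizationReticulation}. This is routine, since both arise from the universal property of the localization ${A \rightarrow A[x^{-1}]}$ applied to elements forced to be $1$ by integrality (Lemma~\ref{LemDividesImpliesAbove}); the triangles expressing compatibility with the universal map ${A \rightarrow A[(x+y)^{-1}]}$ (and with ${A \to A[(xy)^{-1}]}$) commute by uniqueness in the universal property. Finally, since the forgetful functor ${\iRig \rightarrow \Set}$ preserves limits, the square of underlying sets is also a pullback, which is what the sheaf axiom requires.
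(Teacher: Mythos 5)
Your proof is correct and takes essentially the same route as the paper: reduce the sheaf condition to binary covers, then observe that for a cover ${(\eta a) \vee (\eta b) = \eta d}$ the relevant square of localizations is exactly the one that Lemma~\ref{LemPBPO} shows to be a pullback. The only (minor, beneficial) difference is that you also handle the empty cover of ${\bot}$ explicitly via Lemma~\ref{LemTrivialLocalization}, whereas the paper leaves that case implicit.
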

\begin{proof}
To prove that ${\overline{A}}$ is a sheaf consider a cover ${(\eta a) \vee (\eta b) = \eta d}$ in ${L A}$.
Consider also a compatible family given by ${x \in \overline{A}(\eta a) =  A[a^{-1}]}$ and ${y \in \overline{A}(\eta b) = A[b^{-1}]}$. Compatibility means that ${x \cdot ((\eta a) \wedge (\eta b)) = y \cdot ((\eta a) \wedge (\eta b))}$. Now, since ${(\eta a) \wedge (\eta b) = \eta(a b)}$,  the previous equation simplifies to ${x\cdot \eta(a b) = y \cdot \eta(a  b) \in \overline{A} (\eta(a b)) = A[(a b)^{-1}]}$ as in the diagram below
$$\xymatrix{
A[a^{-1}] \ar[r] & A[(a b)^{-1}] & \ar[l] A[b^{-1}] \\
x \ar@{|->}[r] & = & y \ar@{|->}[l]
}$$
so, by Lemma~\ref{LemPBPO}, there exists  a unique ${z \in A[(a + b)^{-1}] = \overline{A}(\eta(a + b)) = \overline{A}(\eta d)}$ such that ${z\cdot (\eta a) = x}$ and ${z\cdot (\eta b) = y}$. In other words, the pushout-pullback lemma implies that ${\overline{A}}$ is a sheaf.
\end{proof}

   We now start the proof that the assignment that sends $A$ to $\overline{A}$ may be extended to a functor ${\iRig \rightarrow \frakI}$.

\begin{lemma}\label{LemRepresentationsAreSo}  For any integral rig $A$ in $\Set$, the pair ${(L A, \overline{A})}$ is an object in $\frakI$.
\end{lemma}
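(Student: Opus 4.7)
The plan is to verify both requirements of being a representation: that $\overline{A}$ is an integral rig in $\Shv(L A)$, and that it satisfies one of the equivalent conditions of Lemma~\ref{LemDefAcutelyLocal}. The first is essentially automatic: since integral rigs form an algebraic theory, the functor $\overline{A}\colon\opCat{(L A)}\to\iRig$ is already an integral rig in $\Psh{L A}$, and Lemma~\ref{LemNewIntegralSheaf} shows it is a sheaf, hence an integral rig in $\Shv(L A)$. I would then verify condition~(2) of Lemma~\ref{LemDefAcutelyLocal}: that $\overline{A}$ is really local and that the subobject $1\rightarrow\overline{A}$ is principal.

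For really local I would apply Proposition~\ref{PropCharReallyLocal}. The first clause is handled by Lemma~\ref{LemTrivialLocalization}: if $\overline{A}(\eta d)=A[d^{-1}]$ is trivial then $d$ is nilpotent, whence $\eta d=\eta(d^n)=\eta 0=\bot$ in $L A$. For the second clause, suppose $[u],[v]\in A[d^{-1}]$ satisfy $[u]+[v]=1$. By Lemma~\ref{LemSubMultiplicationsCanBeCollapsed} there is an $m$ with $d^m\leq u+v$ in $A$, hence $\eta d=\eta(d^m)\leq\eta u\vee\eta v$, so that $a=\eta u\wedge\eta d$ and $b=\eta v\wedge\eta d$ yield a cover $a\vee b=\eta d$. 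Since $\overline{A}(a)=A[(u d)^{-1}]$ inverts $u$, Lemma~\ref{LemIntegralImpliesInvertiblesAreTrivial} forces $[u]\cdot a=1$, and symmetrically for $[v]$.

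For principal I would, given $[c]\in\overline{A}(\eta d)$, exhibit $\eta c\wedge\eta d$ as the largest $\ell\leq\eta d$ with $[c]\cdot\ell=1$. Using surjectivity of $\eta\colon A\to L A$, any $\ell\leq\eta d$ has the form $\eta e$; the restriction map of $\overline{A}$ at this stage is the canonical $A[d^{-1}]\to A[e^{-1}]$ provided by Lemma~\ref{LemLocalizationReticulation} and sends $[c]$ to $[c]$. Now $[c]=1$ in $A[e^{-1}]$ iff some $e^k\leq c$ in $A$, iff $\eta e\leq\eta c$, which together with $\eta e\leq\eta d$ is exactly $\eta e\leq\eta c\wedge\eta d$; this both produces the required witness and shows it is the largest one.

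The main technical hinge throughout is the interplay between the quasi-order $\preceq$ defining the reticulation congruence and the localizations $A[x^{-1}]$; concretely, every inequality to be verified in $L A$ reduces to one of the form $x^n\leq y$ in $A$, and the restriction maps of $\overline{A}$ are controlled by Lemma~\ref{LemLocalizationReticulation}. I expect no substantive obstacle beyond careful bookkeeping of these translations between the internal conditions in $\Shv(L A)$ and the combinatorial data in $A$.
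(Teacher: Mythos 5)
Your proof is correct but takes a genuinely different route from the paper. The paper verifies condition~(3) of Lemma~\ref{LemDefAcutelyLocal}: it observes that the composite $L\circ\overline{A}\colon\opCat{(L A)}\to\dLat$ coincides with $\Lambda_{L A}$, since $L(A[x^{-1}])\cong (L A)[(\eta x)^{-1}]=\below{(\eta x)}$ by Lemma~\ref{LemOverlineAndReticulation}; as the reticulation of $\overline{A}$ in $\Psh{L A}$ is computed pointwise and is local by Lemma~\ref{LemReticulationsInPresheafCats}, this immediately yields the required local rig map $\overline{A}\to\Lambda_{L A}$, and the whole argument fits in a few lines. You instead verify conditions~(1)--(2) of Lemma~\ref{LemDefAcutelyLocal} by direct element-level computation in $A$: really-localness via Proposition~\ref{PropCharReallyLocal} (non-triviality from Lemma~\ref{LemTrivialLocalization}, and an explicit cover $a=\eta u\wedge\eta d$, $b=\eta v\wedge\eta d$ manufactured from a congruence witness $d^m\leq u+v$), and principality by exhibiting $\eta c\wedge\eta d$ as the largest stage at which $[c]$ becomes $1$. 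Both routes rest on the sheaf result Lemma~\ref{LemNewIntegralSheaf}, and under the hood both use the translation that $\eta e\leq\eta c$ in $L A$ if and only if $e^k\leq c$ in $A$ for some $k$ (one direction of which invokes Lemma~\ref{LemIntegralityBasic}); but they are genuinely different applications of Lemma~\ref{LemDefAcutelyLocal}. The paper's route is shorter and more conceptual, recycling the reticulation machinery already built in Lemmas~\ref{LemOverlineAndReticulation} and~\ref{LemReticulationsInPresheafCats}; yours is more elementary and yields explicit covers and an explicit principal witness, information that the paper's proof leaves implicit inside the reticulation map.
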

\begin{proof}
We need to prove that there is a local morphism of rigs ${\overline{A} \rightarrow \Lambda_{L A} = \Lambda}$.
We already know that $\overline{A}$ and $\Lambda$ are sheaves. So it is enough to prove that there is a reticulation ${\overline{A} \rightarrow  \Lambda}$ in ${\Psh{L A}}$. By Lemma~\ref{LemReticulationsInPresheafCats} it is enough to show that the composite
$$\xymatrix{\opCat{(L A)} \ar[r]^-{\overline{A}} & \iRig \ar[r]^-{L} & \dLat}$$
coincides with $\Lambda$. So let ${(\eta x)  \in L A}$ and observe that
\[L (\overline{A} (\eta x)) = L (A[x^{-1}]) = (L A)[(\eta x)^{-1}] = \below{(\eta x)} = \Lambda (\eta x) \]
 by Lemma~\ref{LemOverlineAndReticulation}.
\end{proof}

    Notice that if we apply ${\Gamma:\frakI \rightarrow \iRig}$ we obtain ${\Gamma (L A, \overline{A}) = \overline{A} \top = A[1^{-1}] \cong A}$. In this sense, every integral rig is the rig of global sections of a sheaf of really local integral rigs (over the spectral space determined by the reticulation of $A$). In Section~\ref{SecFuntorialityOfRepresentation} we will show that this representation is functorial. Before that, it seems relevant at this point to remark one special sort of example.

\begin{remark}\label{RemFiniteSpec}
    Let $A$ be an integral rig with reticulation ${A \rightarrow L A}$. Let ${P \rightarrow L A}$ be the subposet of (join-)irreducible elements in ${L A}$. If ${L A}$ is finite then, for general reasons (Lemma~{C2.2.21} in \cite{elephant}),  the inclusion ${P \rightarrow L A}$ induces an equivalence ${\Psh{P} \rightarrow \Shv(L A)}$ between the topos of presheaves on $P$ and the topos of sheaves on the lattice ${L A}$. The representing sheaf ${\overline{A}}$ in ${\Shv(L A)}$ corresponds to a presheaf ${B \in \Psh{P}}$ which must carry its associated structure of really local integral rig. By Lemma~\ref{LemReallyLocalRigsInPresheafToposes}, this means that  for every $p$ in $P$, ${B p}$ is a really local rig. In other words, every integral rig $A$ with finite reticulation may be represented as a functor ${F:\opCat{P} \rightarrow \iRig}$ for a finite poset $P$ and such that for every ${p \in P}$, ${F p}$ is really local. Details will be treated elsewhere.
\end{remark}

\section{The adjunction}
\label{SecFuntorialityOfRepresentation}

   In this section we show that the functor ${\Gamma:\frakI \rightarrow \iRig}$ has a fully faithful left adjoint.

\begin{lemma}\label{LemAdjunctionInDisguise}
Let $A$ be an integral rig in $\Set$ and let $R$ be an integral rig  in ${\Psh{L A}}$. For any ${g:A \rightarrow R\top}$ there exists at most one ${\phi:\overline{A} \rightarrow R}$ such that the following diagram
$$\xymatrix{
A \ar[rd]_-g \ar[r]^-{\simeq} & \overline{A} \top \ar[d]^-{\phi_{\top}} \\
  & R \top
}$$
commutes. Moreover, such $\phi$ exists if and only if for all ${x\in A}$, ${(g x) \cdot (\eta x) = 1 \in R (\eta x)}$.
\end{lemma}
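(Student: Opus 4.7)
The plan is to exploit the explicit description of $\overline{A}$: for each $x \in A$, the restriction map $\overline{A}\top \to \overline{A}(\eta x)$ coincides, under the iso $A \simeq \overline{A}\top$, with the localization $A \to A[x^{-1}]$, which is a surjective quotient by Lemma~\ref{LemSubMultiplicationsCanBeCollapsed}. Naturality of $\phi$ together with the constraint $\phi_\top = g$ will therefore force
\[ \phi_{\eta x}(a \cdot \eta x) \;=\; (g a) \cdot \eta x \]
for every $a \in A$, and surjectivity of the restriction pins down $\phi_{\eta x}$ uniquely, settling the first claim. For the ``only if'' half of the second assertion I specialize the forced formula to $a = x$: inverting $x$ in an integral rig amounts to forcing $x = 1$ in the localization (as in the discussion following Lemma~\ref{LemIntegralImpliesInvertiblesAreTrivial}), so $x \cdot \eta x = 1 \in \overline{A}(\eta x)$, and the fact that $\phi_{\eta x}$ preserves the multiplicative unit then yields $(g x) \cdot \eta x = 1$.

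For the converse I assume the condition and attempt to use the forced formula as the definition of each $\phi_{\eta x}$. The main obstacle is well-definedness, since elements of $A[x^{-1}]$ admit many representatives in $A$. By Lemma~\ref{LemSubMultiplicationsCanBeCollapsed}, two representatives $a, b \in A$ determine the same class in $A[x^{-1}]$ precisely when there exists $n \in \Nat$ with $x^n a \leq b$ and $x^n b \leq a$. Applying the rig map $g$ yields $(g x)^n (g a) \leq g b$ in $R\top$; since the restriction $R\top \to R(\eta x)$ is a rig map, hence monotone with respect to the canonical pre-order, this restricts to
\[ ((g x)\cdot \eta x)^n \, ((g a)\cdot \eta x) \;\leq\; (g b) \cdot \eta x \]
in $R(\eta x)$, and the hypothesis $(g x)\cdot \eta x = 1$ collapses the left-hand side to $(g a) \cdot \eta x$. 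The symmetric inequality finishes the argument and establishes that $\phi_{\eta x}$ is well-defined.

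Once well-definedness is secured, the remaining verifications are routine. Naturality of $\phi$ is immediate from the formula, since for any $\eta x \leq \eta y$ both legs of the restriction square carry $a \cdot \eta y$ to $(g a) \cdot \eta x$. Each component $\phi_{\eta x}$ is a rig morphism because $g$ and the restriction maps in $R$ are rig morphisms, and surjectivity of $A \to A[x^{-1}]$ lets this be checked on representatives. Finally, commutativity of the triangle with $g$ holds at $\top$ since the formula specializes there to $\phi_\top(a) = g a$.
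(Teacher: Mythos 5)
Your proof is correct and follows essentially the same strategy as the paper's: both arguments hinge on the fact that the composite $A \xrightarrow{g} R\top \to R(\eta x)$ inverts $x$ precisely when $(g x)\cdot\eta x = 1$, so that it factors uniquely through the localization $A \to A[x^{-1}] = \overline{A}(\eta x)$. The paper simply invokes the universal property of $A \to A[x^{-1}]$ for existence, uniqueness and naturality, whereas you unpack that universal property by hand through the congruence description of Lemma~\ref{LemSubMultiplicationsCanBeCollapsed}; the mathematical content is the same, just spelled out in more detail.
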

\begin{proof}
Let ${\phi:\overline{A} \rightarrow R}$ be such that triangle in the statement commutes.
For any ${\eta x}$ in ${L A}$ the left square below
$$\xymatrix{
A \ar[d] \ar@/^1pc/[rr]^-g \ar[r]_-{\simeq} & \overline{A} \top  \ar[d] \ar[r]_-{\phi_{\top}} & R \top \ar[d] \\
A[x^{-1}] \ar[r]_-{=} & \overline{A} (\eta x) \ar[r]_-{\phi_{\eta x}} & R(\eta x)
}$$
commutes by definition of $\overline{A}$ and the right square above commutes by naturality of $\phi$. Then ${\phi_{\eta x}}$ is determined by ${\phi_{\top}}$ and the universal property of ${A\rightarrow A[x^{-1}]}$. This completes the proof of the first part of the statement.
Moreover, the rectangle also shows that for every ${x \in A}$, ${(g x) \cdot (\eta x) = 1}$.
For the converse assume that  ${(g x) \cdot (\eta x) = 1}$ holds for all ${x\in A}$. Then the universal property of ${A \rightarrow A[x^{-1}]}$ implies the existence of a unique ${\phi_{\eta x}:\overline{A}(\eta x) \rightarrow R(\eta x)}$ such that the rectangle above commutes. The universal property of localizations also implies that the collection ${(\phi_{\eta x} \mid (\eta x) \in L A)}$ is natural.
\end{proof}

   There is an analogous result for maps in $\frakI$.

\begin{lemma}\label{LemFrakImapsFromOverline}
Let $A$ be an object in $\iRig$ and ${(E, Y)}$ in $\frakI$. For every ${g:A \rightarrow Y\top}$ there exists at most one ${(f, \phi):(L A, \overline{A}) \rightarrow (E, Y)}$ in $\frakI$ such that the triangle on the left below
$$\xymatrix{
A \ar[rd]_-g \ar[r]^-{\simeq} & \overline{A} \top \ar[d]^-{\phi_{\top}} && A \ar[d]_-{\eta} \ar[r]^-g & Y \top \ar[d]^-{\chi_{\top}} \\
  & Y \top && L A \ar[r]_-f & E
}$$
commutes. Moreover, in this case, ${f:L A \rightarrow E}$ is the unique rig morphism such that the square on the right above commutes.
\end{lemma}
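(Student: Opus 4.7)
The plan is to evaluate the compatibility square defining a morphism in $\frakI$ at the top element of $LA$, and then reduce the lemma to epiness of the reticulation $\eta$ together with Lemma~\ref{LemAdjunctionInDisguise}.

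First I would unpack what the compatibility square $f_{*}(\chi_{Y})\circ\phi = f\circ\chi_{\overline{A}}$ becomes at stage $\top \in LA$. Since $f:LA\rightarrow E$ is a morphism of distributive lattices, $f\top=\top$, so $(f_{*}Y)\top=Y\top$, $(f_{*}\Lambda_{E})\top=\Lambda_{E}\top=E$, and the natural transformation $f:\Lambda_{LA}\rightarrow f_{*}\Lambda_{E}$ is at $\top$ just $f:LA\rightarrow E$. Likewise $\overline{A}\top = A[1^{-1}]\cong A$ and $\Lambda_{LA}\top = LA$. The only delicate identification is $(\chi_{\overline{A}})_{\top} = \eta_A$: by Lemma~\ref{LemRepresentationsAreSo} the integral rig $\overline{A}$ in $\Psh{LA}$ has reticulation $\Lambda_{LA}$, computed stagewise by Lemma~\ref{LemReticulationsInPresheafCats}; this reticulation is local by Proposition~\ref{PropReticulationsInGrothendieckToposes}, so by the uniqueness part of Lemma~\ref{LemDefAcutelyLocal} it coincides with $\chi_{\overline{A}}$. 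Hence the compatibility square at $\top$ reads precisely as the right square in the statement, with $\phi_{\top}=g$ under the isomorphism $\overline{A}\top\cong A$.

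For uniqueness, suppose $(f,\phi)$ and $(f',\phi')$ both make the left triangle commute, so $\phi_{\top}=g=\phi'_{\top}$. The square at $\top$ then gives $f\circ\eta = \chi_{\top}\circ g = f'\circ\eta$, and since $\eta$ is epi (Lemma~\ref{LemLattification}), $f=f'$. Now $\phi$ and $\phi'$ are both rig maps $\overline{A}\rightarrow f_{*}Y$ with equal restriction at $\top$, so applying Lemma~\ref{LemAdjunctionInDisguise} to the integral rig $R=f_{*}Y$ in $\Psh{LA}$ yields $\phi=\phi'$. The ``moreover'' clause is the same observation read in the other direction: if $(f,\phi)$ exists, then the compatibility square at $\top$ forces $f\circ\eta=\chi_{\top}\circ g$, and epiness of $\eta$ identifies $f$ as the unique rig morphism satisfying this equation.

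There is no genuine obstacle here; the only subtlety is the identification $(\chi_{\overline{A}})_{\top} = \eta_A$, which requires combining the characterization of the local map $\chi$ for representations (Lemma~\ref{LemDefAcutelyLocal}) with the stagewise description of reticulations in presheaf toposes (Lemma~\ref{LemReticulationsInPresheafCats}). Once this is in hand, both uniqueness statements are immediate.
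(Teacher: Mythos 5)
Your proof is correct and follows the paper's argument essentially verbatim: both evaluate the $\frakI$-compatibility square at stage $\top$ via the identifications $(\chi_{\overline{A}})_\top \cong \eta_A$ and $(f_*\chi_Y)_\top \cong (\chi_Y)_\top$, pin down $f$ by epiness of $\eta$ (Lemma~\ref{LemLattification}), and appeal to Lemma~\ref{LemAdjunctionInDisguise} for the uniqueness of $\phi$ once $f$ is fixed. The only difference is that you spell out the identification $(\chi_{\overline{A}})_\top \cong \eta_A$ more explicitly (via Lemmas~\ref{LemRepresentationsAreSo}, \ref{LemReticulationsInPresheafCats}, and~\ref{LemDefAcutelyLocal}), whereas the paper leaves it implicit in the commuting diagram.
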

\begin{proof}
By Lemma~\ref{LemAdjunctionInDisguise} there exists at most one ${\phi:\overline{A} \rightarrow f_* Y}$ such that the triangle in the statement commutes. So it is enough to  prove that, assuming ${(f, \phi)}$ exists, ${f:L A \rightarrow E}$ makes the right square in the statement commute.
By hypothesis, the middle square below
$$\xymatrix{
A \ar[d]_-{\eta} \ar@/^1.5pc/[rrr]^-g \ar[r]_-{\simeq} & \overline{A} \top \ar[d]_-{\chi_{\top}} \ar[r]_-{\phi_{\top}} & (f_* Y)\top \ar[d]^-{(f_* \chi)_{\top}} \ar[r]_-{=} & Y \top \ar[d]^{\chi_{\top}}\\
L A \ar@/_1pc/[rrr]_-f \ar[r]^-{\simeq} & \Lambda_{L A} \top \ar[r]^-{f} & (f_* \Lambda_E) \top \ar[r]^-{\simeq} & E
}$$
commutes. Since the rest of the diagram commutes, the result follows.
\end{proof}

  We can now prove the main result of the paper.

\begin{theorem}\label{ThmMain}
The functor ${\Gamma: \frakI\rightarrow\iRig }$ has a full and faithful left adjoint.
\end{theorem}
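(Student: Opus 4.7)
The plan is to define the left adjoint $F\colon \iRig \rightarrow \frakI$ on objects by $F(A) = (L A, \overline{A})$, with unit $\eta^{\frakI}_A\colon A \rightarrow \Gamma F(A) = \overline{A}\top$ given by the canonical isomorphism $A \cong A[1^{-1}]$ observed just before Remark~\ref{RemFiniteSpec}. Since this unit is an isomorphism, the full faithfulness of $F$ will follow automatically from the adjunction, so the task reduces to verifying the universal property of the unit: for each $(E, Y) \in \frakI$ and each ${g\colon A \rightarrow Y\top = \Gamma(E, Y)}$ in $\iRig$, there is a unique ${(f, \phi)\colon (L A, \overline{A}) \rightarrow (E, Y)}$ in $\frakI$ whose image under $\Gamma$ is $g$.

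Uniqueness of both components is already handed to us by Lemma~\ref{LemFrakImapsFromOverline}: it forces $f\colon L A \rightarrow E$ to be the unique distributive-lattice factorization of $\chi_{Y,\top} \circ g\colon A \rightarrow E$ through the reticulation $\eta\colon A \rightarrow L A$, and once $f$ is fixed $\phi$ is uniquely determined by Lemma~\ref{LemAdjunctionInDisguise}. For existence, take $f$ as just defined and invoke Lemma~\ref{LemAdjunctionInDisguise} to produce $\phi\colon \overline{A} \rightarrow f_* Y$. Its hypothesis requires ${(g x)\cdot(\eta x) = 1 \in (f_* Y)(\eta x) = Y(f(\eta x))}$; but $f(\eta x) = \chi_{Y,\top}(g x)$ is, by the definition of $\chi_Y$ given after Lemma~\ref{LemDefAcutelyLocal}, the largest $e \in E$ such that ${(g x)\cdot e = 1 \in Y e}$, so the condition holds.

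The substantive step is then to verify that the pair $(f, \phi)$ just produced is genuinely a morphism in $\frakI$, namely that the compatibility square relating $\chi_{\overline{A}}$ and $f_*\chi_Y$ commutes. I would check this stagewise. At the top stage both composites simplify to $\chi_{Y,\top} \circ g = f \circ \eta$, using the observation that $\chi_{\overline{A},\top}$ sends $a \in A \cong \overline{A}\top$ to $\eta a \in L A$ (since the largest $\eta y$ with $a \cdot \eta y = 1 \in A[y^{-1}]$ is $\eta a$, by Lemmas~\ref{LemLocalization} and~\ref{LemTrivialLocalization}). At a general stage $\eta x$, a direct computation shows that both composites send ${(a \bmod x) \in A[x^{-1}] = \overline{A}(\eta x)}$ to ${f(\eta(a x)) \in \below{f(\eta x)} = \Lambda_E(f(\eta x))}$, meeting in $f(\eta a) \wedge f(\eta x)$. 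I expect this stagewise verification to be the main technical obstacle, though each step is a routine application of the definitions of $\chi$ and the identification of $\overline{A}(\eta x)$ with $A[x^{-1}]$.

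Once the bijection $\iRig(A, \Gamma(E,Y)) \cong \frakI(F(A), (E,Y))$ is established, functoriality of $F$ on morphisms is forced by the universal property (define $F(h)$ as the transpose of $\eta^{\frakI}_B \circ h$), naturality of the bijection in both variables is a direct check using the explicit formulas for $f$ and $\phi$ above, and full faithfulness follows from $\eta^{\frakI}_A$ being an isomorphism. This yields the adjunction $F \dashv \Gamma$ with $F$ full and faithful.
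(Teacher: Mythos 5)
Your proposal is correct and follows essentially the same structural path as the paper's proof: take $F(A) = (L A, \overline{A})$ with unit the canonical iso $A \cong \overline{A}\top$, use Lemma~\ref{LemFrakImapsFromOverline} for uniqueness, define $f$ as the lattice factorization of $\chi_{Y,\top}\circ g$ through $\eta$, and verify the hypothesis of Lemma~\ref{LemAdjunctionInDisguise} to produce $\phi$.

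The one place you diverge is the final verification that $(f,\phi)$ is actually a morphism in $\frakI$, i.e.\ that the square relating $\chi_{\overline{A}}$ and $f_*\chi_Y$ commutes. You propose a stagewise check at every $\eta x \in L A$. That does work (your identification $\chi_{\overline{A},\eta x}(a\bmod x) = \eta(ax)$ is correct because $\chi_{\overline{A}}$ is the reticulation of $\overline{A}$ by Lemma~\ref{LemRepresentationsAreSo}, and the meet $f(\eta a)\wedge f(\eta x)$ computation on the other side is right), but it is more work than needed. The paper instead observes that both sides of the square are rig maps $\overline{A}\to f_*\Lambda_E$ with domain $\overline{A}$; the \emph{uniqueness} clause of Lemma~\ref{LemAdjunctionInDisguise} (which you already cited for the uniqueness of $\phi$) says any such map is determined by its value at stage $\top$ precomposed with the iso $A\to\overline{A}\top$. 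So the entire square collapses to checking commutativity at the top stage, which is exactly the defining square for $f$. This avoids the stagewise computation entirely. A small remark on citations: Lemma~\ref{LemLocalization} assumes $a$ idempotent, so it is not the right support for $\chi_{\overline{A},\top}(a)=\eta a$; the direct route is via Lemma~\ref{LemRepresentationsAreSo} (reticulation at stage $\top$ is $\eta$) or the description in Lemma~\ref{LemSubMultiplicationsCanBeCollapsed}.
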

\begin{proof}
   Fix an object $A$ in $\iRig$. By Lemma~\ref{LemRepresentationsAreSo}, ${(L A, \overline{A})}$ is an object in $\frakI$ and we can consider the (iso) map ${A \rightarrow A[1^{-1}] = \overline{A}\top = \Gamma (L A, \overline{A})}$. We claim that this map is universal from $A$ to $\Gamma$.
   To prove this let ${(E, Y)}$ in ${\frakI}$ and ${g:A \rightarrow \Gamma (E, Y) = Y \top}$ in $\iRig$.
     By Lemma~\ref{LemFrakImapsFromOverline}, there exists at most one map ${(f, \phi):(L A, \overline{A}) \rightarrow (E, Y)}$ in $\frakI$ such that the following diagram
$$\xymatrix{
A \ar[rd]_-{g} \ar[r]^-{\simeq} & \overline{A} \top \ar[d]^-{\phi_{\top}} & = & \Gamma(L A, \overline{A}) \ar[d]^-{\Gamma(f, \phi)}\\
 & Y \top & = & \Gamma(E, Y)
}$$
commutes. So, to complete the proof,  it is enough to construct one such ${(f, \phi)}$.
Again by Lemma~\ref{LemFrakImapsFromOverline},  ${f:L A \rightarrow E }$ is forced to be the unique rig morphism such that the square below
$$\xymatrix{
A \ar[d]_-{\eta} \ar[r]^-g & (f_* Y) \top = Y \top \ar[d]^-{\chi_{\top}} \\
L A \ar[r]_-f & E = \Lambda_{E}\top
}$$
commutes. By Lemma~\ref{LemDefAcutelyLocal}, for any ${y \in Y\top}$, ${y \cdot (\chi_{\top} y) = 1 \in Y(\chi_{\top} y)}$. In particular, for every ${x\in A}$, ${\chi_{\top} (g x) = f(\eta x)}$; so ${(g x) \cdot (f (\eta x)) = 1 \in Y(f(\eta x))}$
and hence, ${(g x) \cdot (\eta x) = 1}$  in ${(f_* Y)(\eta x)}$. By Lemma~\ref{LemAdjunctionInDisguise}, there exists a unique ${\phi:\overline{A} \rightarrow f_* Y}$ such that the following diagram
$$\xymatrix{
A \ar[rd]_-g \ar[r]^-{\simeq} & \overline{A} \top \ar[d]^-{\phi_{\top}} \\
 & (f_* Y) \top
}$$
commutes. We claim that ${(f, \phi)}$ is a map in $\frakI$ from ${(L A, \overline{A})}$ to ${(E, Y)}$. For this, we must check that the square on the left below
$$\xymatrix{
\overline{A} \ar[d]_-{\chi} \ar[r]^-{\phi} & f_* Y \ar[d]^-{f^*\chi} &&
   A \ar@/^1.5pc/[rrr]^-g \ar[d]_-{\eta} \ar[r]_-{\simeq} & \overline{A}\top \ar@{}[rd]|{(\ast)} \ar[d]_-{\chi_{\top}} \ar[r]^-{\phi_{\top}} & (f_* Y)\top \ar[d]^-{(f_*\chi)_{\top}} \ar[r]_-{=} & Y \top  \ar[d]^-{\chi_{\top}} \\
\Lambda_{L A} \ar[r]_-f & f_* \Lambda_{E} &&
  L A \ar@/_1.5pc/[rrr]_-f \ar[r]^-{\simeq} & \Lambda_{L A}\top \ar[r]_-f & (f_* \Lambda_{E})\top \ar[r]^-{=} & \Lambda_{E} \top \simeq E
}$$
commutes in ${\Shv(L A)}$. By Lemma~\ref{LemAdjunctionInDisguise}, it is enough to check that the square marked with ${(\ast)}$ on the right above commutes. Pre-composing with the iso ${A\rightarrow \overline{A}\top}$ we obtain the rectangle which commutes by definition of $f$.
 This completes the proof that ${A \rightarrow \Gamma(L A, \overline{A})}$ is universal from $A$ to $\Gamma$. Since this map is an iso the left adjoint is full and faithful.
\end{proof}

   The    left adjoint ${\iRig \rightarrow \frakI}$ sends ${A \in \iRig}$ to ${(L A, \overline{A})}$ and a map  ${h:A \rightarrow B}$ in $\iRig$ to the $\frakI$-map ${(L h, \psi)}$ where ${\psi:\overline{A} \rightarrow (L h)_* \overline{B}}$ is the unique rig map in ${\Shv(L A)}$ such that the following diagram
$$\xymatrix{
A \ar[d]_-h \ar[r]^-{\simeq} & \overline{A} \top \ar[d]^-{\psi_{\top}} \\
B \ar[r]_-{\simeq} & \overline{B}\top = ((L h)_* \overline{B}) \top
}$$
commutes.

   It is natural to ask if ${\Gamma:\frakI \rightarrow \iRig}$ is an equivalence.
   The answer seems to be `no'. The evidence comes from Lemma~\ref{LemRepresentationsAreSo}, which shows that the map ${\chi:\overline{A} \rightarrow \Lambda_{L A}}$ is the reticulation of ${\overline{A}}$ in ${\Shv(L A)}$. So we are led to consider the full subcategory of $\frakI$ determined by the representations ${(D, X)}$ such that ${\chi:X \rightarrow \Lambda_D}$ is a reticulation of $X$ in ${\Shv(D)}$. Is the restriction of ${\Gamma}$ to this subcategory an equivalence?

\section{The coextensive category of integral residuated rigs}
\label{SecIntegralResiduatedRigs}

   Let $A$ be a rig with canonical pre-order denoted by ${(A, \leq)}$. Any ${a \in A}$ determines a monotone functor ${a\cdot(\_): (A, \leq) \rightarrow (A, \leq)}$ and it is natural to consider cases where each of these functors has a right adjoint. In general these right adjoints are unique up to iso; but if addition in $A$ is idempotent then the canonical pre-order is a partial order and so the right adjoints are unique. In this case, the resulting category of structures can be equationally presented.

\begin{definition}\label{DefResiduatedRig}
A rig $A$ is called {\em residuated} if its addition is idempotent and for every ${a \in A}$, ${a\cdot (\_):A \rightarrow A}$ has a right adjoint (that will be denoted by ${a\multimap (\_)}$). If $A$ and $B$ are residuated rigs then a {\em morphism} ${f:A \rightarrow B}$ is a map ${f:A \rightarrow B}$ between the underlying rigs such that for every ${x, y \in A}$, ${f(x\multimap y) = (f x) \multimap (f y)}$.
\end{definition}

   Let ${\rRig}$ denote the algebraic category of residuated rigs in $\Set$ and let ${\riRig \rightarrow \rRig}$ the full subcategory of those residuated rigs that are also integral.
   For brevity,  residuated integral rigs will be referred to as {\em ri-rigs}.

\begin{lemma}\label{LemLocalizationsLiftToRirigs}
If $A$ is a ri-rig and ${F \rightarrow A}$ is a multiplicative submonoid then the universal ${A \rightarrow A[F^{-1}]}$ in $\iRig$ is also the universal solution to inverting ${F}$ in $\riRig$.
\end{lemma}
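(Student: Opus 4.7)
The plan is to use the explicit description of $A \to A[F^{-1}]$ as the quotient by the congruence $\equiv_F$ from Lemma~\ref{LemSubMultiplicationsCanBeCollapsed}, show that this quotient inherits a residuation, and then show the universal factorization in $\iRig$ automatically preserves $\multimap$. Writing $[x]$ for the class of $x \in A$ in $A[F^{-1}]$, I would first note that $[x] \leq [y]$ in $A[F^{-1}]$ if and only if $x \mid_F y$, i.e.\ there exists $w \in F$ with $w x \leq y$. (The non-trivial direction uses that $w y \leq y$ by Lemma~\ref{LemDividesImpliesAbove} and that addition is idempotent because $A$ is integral.)

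The heart of the proof is to show that $\equiv_F$ is compatible with $\multimap$. Assume $x \equiv_F x'$ and $y \equiv_F y'$, so there are $w_1, w_2, v_1, v_2 \in F$ with $w_1 x \leq x'$, $w_2 x' \leq x$, $v_1 y \leq y'$, $v_2 y' \leq y$. Then, using monotonicity of multiplication together with the adjunction ${a\cdot(\_) \dashv a\multimap(\_)}$,
\[ (v_1 w_2)\cdot (x \multimap y) \cdot x' \leq v_1 \cdot (x \multimap y) \cdot x \leq v_1 \cdot y \leq y', \]
so by the adjunction $(v_1 w_2)(x \multimap y) \leq (x' \multimap y')$, i.e.\ $(x \multimap y) \mid_F (x' \multimap y')$. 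The symmetric calculation gives $(x' \multimap y') \mid_F (x \multimap y)$, so $(x \multimap y) \equiv_F (x' \multimap y')$. Hence $[x] \multimap [y] := [x \multimap y]$ is a well-defined operation on $A[F^{-1}]$.

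Next I would verify this is a residuation on the quotient. Using the description of $\leq$ above and the adjunction in $A$:
\[ [a][b] \leq [y] \iff \exists w\in F:\ w(ab) \leq y \iff \exists w\in F:\ (wb) \leq (a\multimap y) \iff [b] \leq [a\multimap y]. \]
So $A[F^{-1}]$ is a ri-rig and, by construction, the quotient map $q: A \to A[F^{-1}]$ preserves $\multimap$.

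Finally, for the universal property: given any ri-rig morphism $f: A \to B$ with $f$ sending each element of $F$ to $1 \in B$, Lemma~\ref{LemSubMultiplicationsCanBeCollapsed} produces a unique rig morphism $\bar{f}: A[F^{-1}] \to B$ with $\bar{f}\circ q = f$. Preservation of $\multimap$ is immediate: $\bar{f}([x]\multimap[y]) = \bar{f}([x\multimap y]) = f(x\multimap y) = fx \multimap fy = \bar{f}[x] \multimap \bar{f}[y]$. Uniqueness is inherited from the uniqueness in $\iRig$. The only real obstacle is the compatibility calculation in the second paragraph; everything else is a routine unpacking of definitions.
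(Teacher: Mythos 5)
Your proposal is correct and follows essentially the same route as the paper: both hinge on checking that the congruence $\equiv_F$ from Lemma~\ref{LemSubMultiplicationsCanBeCollapsed} is compatible with $\multimap$, via the estimate $(vu')(x\multimap y)\leq (x'\multimap y')$ obtained from $u'x'\leq x$ and $vy\leq y'$. The paper treats the remaining steps (that the quotient is residuated and that the universal factorization preserves $\multimap$) as routine, whereas you spell them out, but the mathematical content is the same.
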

\begin{proof}
It is enough to check that the congruence built in Lemma~\ref{LemSubMultiplicationsCanBeCollapsed} is a congruence of residuated rigs.
Recall that we defined the congruence $\equiv_F$ on $A$ as that induced by the pre-order ${\mid_F}$ which, in turn, was defined by declaring that ${x \mid_F y}$ if and only if there exists a ${w \in F}$ such that ${w x \leq y}$.
So, to prove the present result, it is enough to check that if ${x\equiv_F x'}$ and ${y \equiv_F y'}$ then ${(x \multimap y) \mid_F (x' \multimap y')}$.
Now, if ${x'\mid_F x}$ and ${y \mid_F y'}$ then there are ${u', v \in F}$ such that ${u' x' \leq x}$ and ${v y \leq y'}$. The product ${v u'}$ is in $F$ and, as ${(x \multimap y) \leq (u' x' \multimap y)}$ we have that
\[ v u' x'  (x \multimap y) \leq v u' x'  (u' x' \multimap y) \leq v y \leq y' \]
so ${(v u') (x \multimap y) \leq (x' \multimap y')}$.
\end{proof}

   Roughly speaking, localizations in $\riRig$ may be calculated as in $\iRig$.
   This  allows us to lift many constructions from the latter category to the former.
   In fact, we are going to lift all of Theorem~\ref{ThmMain}.

\begin{corollary}\label{CorRiRigIsCoextensive}
The category $\riRig$ is coextensive.
\end{corollary}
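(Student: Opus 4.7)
The plan is to follow exactly the template of Proposition \ref{PropIRigIsCoextensive}, applying the dual of Proposition \ref{PropCharExtensivity} to $\riRig$, and to use Lemma \ref{LemLocalizationsLiftToRirigs} to identify the relevant pushouts in $\riRig$ with pushouts already computed in $\iRig$.

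First I would verify that the terminal object of $\riRig$ is still the one-element ri-rig, so that the argument about the pushout of $A \leftarrow A \times B \rightarrow B$ being terminal is literally the same: the element $(1,0)$ forces $1=0$ in the pushout. Next, the initial ri-rig is still $2$ (the two-element distributive lattice, with trivial residuation), so a pushout in $\riRig$ along a map $g:2\times 2\to A$ reduces, as before, to computing $A\to A[a_0^{-1}]$ and $A\to A[a_1^{-1}]$ where $a_0,a_1$ are the images of the two nontrivial idempotents of $2\times 2$. Here Lemma~\ref{LemLocalizationsLiftToRirigs} is the crucial input: the $\iRig$-localization at an idempotent solves the same universal problem in $\riRig$, so the pushouts involved are the very same objects as in the proof of Proposition~\ref{PropIRigIsCoextensive}.

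It then remains to show that whenever $a\in A$ is complemented with complement $b$, the canonical morphism
\[
A \longrightarrow A[a^{-1}]\times A[b^{-1}] \;=\; \below{a}\times \below{b}
\]
is an iso in $\riRig$. By Proposition~\ref{PropIRigIsCoextensive} this map is already a bijection, and by Lemma~\ref{LemLocalizationsLiftToRirigs} it is a morphism in $\riRig$: its codomain is a product of ri-rigs (hence a ri-rig), and the two projections to $\below{a}$ and $\below{b}$ are ri-rig morphisms since they are the universal localizations at $a$ and $b$. Since residuated integral rigs form an equationally defined (algebraic) category, any bijective homomorphism is an isomorphism; so the set-theoretic inverse automatically preserves $\multimap$ and we are done.

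The main obstacle, modest as it is, lies in checking that the pushout of $g:2\times 2\to A$ computed inside $\iRig$ is also its pushout inside $\riRig$; this is precisely where Lemma~\ref{LemLocalizationsLiftToRirigs} is essential, together with the observation that a product decomposition of an integral rig $A$ into two ri-rigs $\below{a}$ and $\below{b}$ is automatically compatible with the residuation inherited from $A$, because that residuation is uniquely determined by the order (and so by the underlying rig structure) of each factor.
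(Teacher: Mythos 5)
Your proposal is correct and takes essentially the same route as the paper: the paper's proof is simply the observation that $2$ is initial in $\riRig$ and that Lemma~\ref{LemLocalizationsLiftToRirigs} lets the proof of Proposition~\ref{PropIRigIsCoextensive} lift verbatim, which is exactly the argument you have unpacked in detail.
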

\begin{proof}
The initial integral rig $2$ is a Heyting algebra and it is also initial in $\riRig$.
Lemma~\ref{LemLocalizationsLiftToRirigs} implies that the proof of Proposition~\ref{PropIRigIsCoextensive} lifts to a proof that $\riRig$ is coextensive.
\end{proof}

  The pushout-pullback lemma is essentially about localizations and pullbacks; and these are calculated as in $\iRig$.

\begin{corollary}\label{CorPOPB}
If $A$ is in $\riRig$ then for every ${a, b \in A}$ the canonical maps make the following diagram commute
$$\xymatrix{
A[(a + b)^{-1}] \ar[d] \ar[r] & A[b^{-1}] \ar[d] \\
A[a^{-1}] \ar[r] & A[(a b)^{-1}]
}$$
and, moreover, the square is both a pullback and a pushout.
\end{corollary}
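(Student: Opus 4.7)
The plan is to transfer the pushout-pullback lemma (Lemma~\ref{LemPBPO}) from $\iRig$ to $\riRig$ by showing that all the relevant constructions are computed identically in the two categories. The remark preceding the corollary already indicates the strategy: both the localizations and the pullback appearing in the square are created by the forgetful functor $\riRig \to \iRig$.

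First, I would invoke Lemma~\ref{LemLocalizationsLiftToRirigs} four times to conclude that the objects $A[a^{-1}]$, $A[b^{-1}]$, $A[(a+b)^{-1}]$, and $A[(ab)^{-1}]$, together with the four structural morphisms of the square, are the same in $\iRig$ and in $\riRig$, and enjoy the same universal property in both categories. In particular the square commutes in $\riRig$ for precisely the same reason it commutes in $\iRig$.

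Next I would verify that the forgetful functor $\riRig \to \iRig$ creates finite limits. Given a cospan $R \to T \leftarrow S$ in $\riRig$ with underlying pullback $P = R \times_T S$ in $\iRig$, the formula $(r,s) \multimap (r',s') = (r \multimap r',\, s \multimap s')$ defines a residuation on $P$: the pair lies in $P$ because the maps to $T$ preserve $\multimap$, and the adjunction property $(r,s)(x,y) \leq (r',s') \iff (x,y) \leq (r,s)\multimap(r',s')$ holds pointwise. Uniqueness is immediate from the fact that the projections must preserve $\multimap$. Combined with the previous step and Lemma~\ref{LemPBPO}, this shows that the square is a pullback in $\riRig$.

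For the pushout claim I would argue directly from universal properties. Given any ri-rig $R$ equipped with a compatible cocone from $A[a^{-1}]$ and $A[b^{-1}]$ (over $A[(ab)^{-1}]$), the composite $A \to A[a^{-1}] \to R$ is a ri-rig morphism which inverts $a$, and by compatibility it also inverts $b$, hence inverts $ab$. By the universal property of $A \to A[(ab)^{-1}]$ in $\riRig$ (Lemma~\ref{LemLocalizationsLiftToRirigs}), there is a unique ri-rig map $A[(ab)^{-1}] \to R$ through which this factors; the required compatibility with the maps out of $A[a^{-1}]$ and $A[b^{-1}]$ follows because $A \to A[a^{-1}]$ and $A \to A[b^{-1}]$ are epi. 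This establishes the pushout property.

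I do not foresee a substantive obstacle: both halves of the statement reduce to checking that the $\riRig$-structure is determined by the underlying $\iRig$-structure, which is a routine consequence of the uniqueness of residuation once the multiplicative structure is fixed. The slightly delicate step is verifying creation of pullbacks, but the pointwise formula for $\multimap$ on $P$ makes this immediate.
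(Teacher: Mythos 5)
Your proof takes essentially the same route as the paper, which justifies this corollary with only the one-line observation that localizations and pullbacks in $\riRig$ are computed as in $\iRig$; your proposal simply makes that remark precise, using Lemma~\ref{LemLocalizationsLiftToRirigs} for the localizations, creation of pullbacks by the forgetful functor (with the pointwise residuation), and the epiness of $A \to A[a^{-1}]$, $A\to A[b^{-1}]$ for the pushout. One small slip: in the pushout paragraph the cocone should be compatible over the span's apex $A[(a+b)^{-1}]$, not over $A[(ab)^{-1}]$, though the rest of that argument is fine once this is corrected.
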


\newcommand{\frakrI}{\mathfrak{rI}}

   Let ${\frakrI}$ be the subcategory of ${\frakI}$ consisting of the objects ${(D, X)}$ such that ${X}$ is residuated in ${\Shv(D)}$ and whose maps ${(f, \phi):(D, X) \rightarrow (E, Y)}$ such that ${\phi:X \rightarrow f_* Y}$ is a morphism of residuated lattices in ${
\Shv(D)}$.
   It follows that  we have a functor ${\Gamma:\frakrI \rightarrow \riRig}$ such that the following diagram
$$\xymatrix{
\frakrI \ar[d] \ar[r]^-{\Gamma} & \riRig \ar[d] \\
\frakI \ar[r]_-{\Gamma} & \iRig
}$$
commutes, where the vertical functors are the obvious forgetful functors.

\begin{corollary}\label{CorMain}
The functor ${\Gamma:\frakrI \rightarrow \riRig}$ has a full and faithful left adjoint.
\end{corollary}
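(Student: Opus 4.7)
The plan is to lift the adjunction of Theorem~\ref{ThmMain} to the residuated setting by showing that every construction in its proof restricts along the forgetful functors $\riRig \to \iRig$ and $\frakrI \to \frakI$. The engine that makes this work is Lemma~\ref{LemLocalizationsLiftToRirigs}: localizations of ri-rigs at multiplicative submonoids coincide with those in $\iRig$ and carry a canonical residuated structure satisfying the appropriate ri-rig universal property. Since the theory of ri-rigs is equational once addition is known idempotent, a ri-rig in a Grothendieck topos is a sheaf of ri-rigs, and direct images along geometric morphisms (which preserve finite products) send ri-rigs to ri-rigs.

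First I would show that for $A$ in $\riRig$ the representing object $(L A,\overline{A})$ lies in $\frakrI$. Each fiber $\overline{A}(\eta x) = A[x^{-1}]$ is a ri-rig by Lemma~\ref{LemLocalizationsLiftToRirigs}, and for $\eta x \leq \eta y$ the restriction $A[y^{-1}] \to A[x^{-1}]$ is a ri-rig morphism because the composite $A \to A[y^{-1}] \to A[x^{-1}]$ is itself the ri-rig localization $A \to A[x^{-1}]$ (in which $y$ becomes invertible and hence equal to $1$ by integrality), so the ri-rig universal property of $A \to A[y^{-1}]$ factors it uniquely through a ri-rig map. Thus $\overline{A}$ lifts to a functor $\opCat{(L A)} \to \riRig$, and since it was already a sheaf in $\iRig(\Shv(L A))$ by Lemma~\ref{LemNewIntegralSheaf}, it is a ri-rig in $\Shv(L A)$. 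An analogous application of Lemma~\ref{LemLocalizationsLiftToRirigs} makes the map $\psi:\overline{A} \to (L h)_* \overline{B}$ induced by a ri-rig morphism $h:A \to B$ into a morphism of ri-rigs, so the assignment $A \mapsto (L A,\overline{A})$ extends to a functor $\riRig \to \frakrI$.

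For the universal property, given $(E,Y)$ in $\frakrI$ and a ri-rig morphism $g:A \to Y\top = \Gamma(E,Y)$, Theorem~\ref{ThmMain} already supplies a unique map $(f,\phi):(L A,\overline{A}) \to (E,Y)$ in $\frakI$ inducing $g$; it remains to check that $\phi$ is a morphism of ri-rigs. Since $f_*Y$ is a ri-rig in $\Shv(L A)$, it suffices to verify that each $\phi_{\eta x}:A[x^{-1}] \to Y(f x)$ preserves residuation. But $\phi_{\eta x}$ is the unique rig extension of the composite $A \xrightarrow{g} Y\top \to Y(f x)$ to the localization, and this composite is a ri-rig morphism (because $g$ is, and the presheaf action of $Y$ is by ri-rig maps) which inverts $x$; Lemma~\ref{LemLocalizationsLiftToRirigs} then forces the extension to be a ri-rig morphism. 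Thus $(f,\phi)$ lies in $\frakrI$, and since the unit $A \to \Gamma(L A,\overline{A}) = A[1^{-1}]$ remains an iso in $\riRig$, the left adjoint is full and faithful. The main technical point is the compatibility of residuation with restriction maps in $\overline{A}$ and with the component maps of $\phi$; both reduce to the ri-rig universal property of localizations, so Lemma~\ref{LemLocalizationsLiftToRirigs} removes what would otherwise be the principal obstacle.
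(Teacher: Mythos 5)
Your proof is correct and takes essentially the same approach as the paper: you lift Theorem~\ref{ThmMain} through the forgetful functors by invoking Lemma~\ref{LemLocalizationsLiftToRirigs} to ensure that $\overline{A}$ and the map $\phi$ inherit residuated structure from the universal property of localizations. The paper states this more tersely; you simply spell out the componentwise verifications (restriction maps of $\overline{A}$ and components of $\phi$) that the paper leaves implicit.
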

\begin{proof}
For any $A$ in $\riRig$, the integral rig $\overline{A}$ is still a sheaf in ${\Shv(L A)}$ and it is residuated by Lemma~\ref{LemLocalizationsLiftToRirigs}. (We stress that ${L A}$ is the reticulation of $A$ as an integral rig. The residuated structure plays no role in the construction of the topos where the representing really local algebra lives.)
So ${(L A, \overline{A})}$ is in ${\frakrI}$.
Also, given any $A$ in $\riRig$, ${(E, Y)}$ in ${\frakrI}$ and ${g:A \rightarrow \Gamma(E, Y) = Y \top}$ in ${\riRig}$, the map ${(f, \phi):(L A, \overline{A}) \rightarrow (E, Y)}$ is in $\frakrI$ because $\phi$ is built using universal properties of localizations and so it must be a morphism of residuated lattices.
\end{proof}

   So every  ${A \in \riRig}$ is the rig of global sections of a sheaf $\overline{A}$ of really local integral residuated rigs.

\begin{definition}\label{DefPrelinear}
A residuated rig $A$ is {\em pre-linear} if the equation ${(a\multimap b) + (b\multimap a) = 1}$ holds for every ${a, b \in A}$.
\end{definition}

   The fibers of our representation result now start to look familiar.

\begin{lemma}\label{LemReallyLocalPrelinear}
Let $A$ be a non-trivial integral rig. If the canonical pre-order of $A$ is total then $A$ is really local. If $A$ is also pre-linear then the converse holds.
\end{lemma}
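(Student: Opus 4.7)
The plan is to handle the two implications separately, using the explicit description of really local integral rigs in $\Set$ from Lemma~\ref{LemReallyLocalIntegral}: an integral rig $A$ is really local iff $0 \neq 1$ and, for all $x, y \in A$, the equation $x + y = 1$ forces $x = 1$ or $y = 1$. Since $A$ is integral, addition is idempotent and the canonical pre-order makes $(A, +, 0)$ a join-semilattice with $x + y$ equal to the join of $x$ and $y$, and with $1$ the top element.

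For the first implication, assume the canonical pre-order of $A$ is total. Non-triviality gives $0 \neq 1$, so the first sequent of Lemma~\ref{LemReallyLocalIntegral} holds. For the second, suppose $x + y = 1$. By totality, either $x \leq y$ or $y \leq x$; in the first case $x + y = y$, so $y = 1$, and in the second case symmetrically $x = 1$. Hence $A$ is really local.

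For the converse, assume $A$ is integral, non-trivial, really local, and pre-linear. Given $a, b \in A$, pre-linearity yields $(a \multimap b) + (b \multimap a) = 1$, and really localness gives $a \multimap b = 1$ or $b \multimap a = 1$. Using that every element of an integral rig is below $1$ (so $x = 1$ iff $1 \leq x$) together with the adjunction $a \cdot (-) \dashv a \multimap (-)$, the equality $a \multimap b = 1$ is equivalent to $1 \leq a \multimap b$, which in turn is equivalent to $a = a \cdot 1 \leq b$. Symmetrically, $b \multimap a = 1$ gives $b \leq a$. Thus the pre-order is total.

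There is no real obstacle: both directions are straightforward unwindings. The only point that requires a brief mention is that in an integral rig the two facts $x \leq 1$ for all $x$ and $x + y = x \vee y$ (since $1 + 1 = 1$) let one freely translate between the semilattice viewpoint and the ``$=1$'' viewpoint used in Lemma~\ref{LemReallyLocalIntegral}.
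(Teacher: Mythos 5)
Your proof is correct and follows essentially the same route as the paper's, which is quite terse: the paper leaves the forward direction (total implies really local) implicit and, for the converse, simply notes that pre-linearity plus really-localness yields $1 = (a\multimap b)$ or $1 = (b\multimap a)$, leaving the translation to $a\leq b$ or $b\leq a$ to the reader. You spell out both directions fully, including the adjunction step $1 \leq a\multimap b \Leftrightarrow a \leq b$, but the underlying argument is identical.
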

\begin{proof}
If $A$ is pre-linear and really local then, for any ${a, b \in A}$, ${1 = (a \multimap b)}$ or ${1 = (b\multimap a)}$.
\end{proof}

\newcommand{\pliRig}{\mathbf{pliRig}}
\newcommand{\frakplI}{\mathfrak{plI}}
\newcommand{\frakmvI}{\mathfrak{plI}}

   Let ${\pliRig \rightarrow \riRig}$ be the variety of pre-linear integral rigs. Of course, localizations in $\pliRig$ are calculated as in $\riRig$.
   Let ${\frakplI}$ be the full subcategory of ${\frakrI}$ determined by the objects ${(D, X)}$ such that $X$ in pre-linear in ${\Shv(D)}$. The functor ${R:\frakrI \rightarrow  \riRig}$ restricts to a functor ${R:\frakplI \rightarrow \pliRig}$ and Corollary~\ref{CorMain} restricts to the following result.

\begin{corollary}\label{CorPreLinear}
The functor ${R:\frakplI \rightarrow \pliRig}$ has a full and faithful left adjoint.
\end{corollary}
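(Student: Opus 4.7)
The strategy is to show that the proof of Corollary~\ref{CorMain} restricts to $\frakplI$ and $\pliRig$, which amounts to verifying that the construction $A \mapsto (L A, \overline{A})$ lands in $\frakplI$ whenever $A$ is pre-linear, and that the unit map is still universal in the pre-linear setting.

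The first step is to observe that pre-linearity is preserved by the localizations used in building $\overline{A}$. Since localizations in $\riRig$ are computed as in $\iRig$ (Lemma~\ref{LemLocalizationsLiftToRirigs}) by quotienting by the congruence $\equiv_F$, and since the quotient map is a surjective morphism of residuated rigs, the equation $(a\multimap b) + (b\multimap a) = 1$ defining pre-linearity is inherited by $A[x^{-1}]$ from $A$. Hence for every $\eta x \in L A$ the fiber $\overline{A}(\eta x) = A[x^{-1}]$ lies in $\pliRig$, and the transition maps $A[y^{-1}] \to A[x^{-1}]$ given by Lemma~\ref{LemLocalizationReticulation} are morphisms of residuated rigs (again by Lemma~\ref{LemLocalizationsLiftToRirigs}). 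Thus $\overline{A}$ is a functor $\opCat{(L A)} \rightarrow \pliRig$.

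The second step is to conclude that $\overline{A}$ is pre-linear as an internal rig in $\Shv(L A)$. Since pre-linearity is an equational condition on operations that are evaluated objectwise in presheaf toposes, and the sheaf condition is preserved (Lemma~\ref{LemNewIntegralSheaf}), the equation $(a\multimap b) + (b\multimap a) = 1$ holds globally in $\overline{A}$ as soon as it holds in each fiber $\overline{A}(\eta x)$. Therefore $(L A, \overline{A})$ is an object of $\frakplI$.

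Finally, the adjunction statement is a direct restriction of Corollary~\ref{CorMain}. Given $A \in \pliRig$, $(E,Y) \in \frakplI$ and $g : A \rightarrow Y\top$ in $\pliRig$, the unique map $(f,\phi):(L A,\overline{A}) \rightarrow (E,Y)$ produced in the proof of Theorem~\ref{ThmMain} lies in $\frakrI$ by Corollary~\ref{CorMain}, and $\phi$ is automatically a morphism of pre-linear residuated rigs because its components are built from the universal property of the localizations $A \rightarrow A[x^{-1}]$, which already live in $\pliRig$. The counit-style iso $A \rightarrow \Gamma(L A, \overline{A})$ is the same as before, so the left adjoint is fully faithful. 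The only step requiring any care is the very first one, namely checking that the congruence $\equiv_F$ of Lemma~\ref{LemSubMultiplicationsCanBeCollapsed} descends a pre-linearity witness to the quotient; but this is automatic because the quotient map preserves $+$, $\cdot$ and $\multimap$, and hence preserves the pre-linearity equation.
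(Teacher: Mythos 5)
Your proposal is correct and follows exactly the approach the paper intends (the paper simply asserts that Corollary~\ref{CorMain} restricts, and your write-up spells out why: the localizations $A[x^{-1}]$ are quotients of $A$ by a residuated-rig congruence, so they inherit the pre-linearity equation, and the map $(f,\phi)$ built in the proof of Theorem~\ref{ThmMain} lies in the full subcategory $\frakplI$). Nothing to add; the paper's own argument is precisely this restriction.
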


   That is, every pre-linear integral rig $A$ is, functorially, the rig of global sections of a sheaf $\overline{A}$ in ${\Shv(L A)}$ with totally ordered fibers. Also, continuing with the idea outlined in Remark~\ref{RemFiniteSpec}, we may conclude that every pre-linear integral rig with finite reticulation may be represented as a functor from a finite poset to the category of totally ordered integral residuated rig.

   Further restrictions on the algebraic theory provide other new representation results in terms of sheaves with totally ordered fibers. For example, one may restrict to pre-linear integral  rigs satisfying ${\neg\neg x = x}$ to obtain a representation result in terms of totally order fibers. (Here, as usual,  ${\neg x = x\multimap 0}$.)

   Another case worth mentioning is that of MV-algebras. If one restricts to pre-linear integral residuated rigs satisfying ${\neg\neg x = x}$ and the Wajsberg condition then we obtain a variant of the Dubuc-Poveda representation theorem, involving only spectral maps between the spectral bases. We give some of the details in Section~\ref{SecMV} below.

\section{Corollaries in terms of local homeos}
\label{SecLH}

\newcommand{\spec}{Spec}
\newcommand{\ccompactd}{\overline{\mathcal{K}}(spec(D))}
\newcommand{\compactd}{\mathcal{K}(Spec(D))}
\newcommand{\ccompactx}{\overline{\mathcal{K}}(X)}
\newcommand{\compactx}{\mathcal{K}(X)}
\newcommand{\openx}{\mathcal{O}(X)}

In this section we briefly explain how our results may be expressed in terms of local homeos with algebraic structure on the fibers.
    It is a classical result that for any topological space $X$, the category ${\LH/X}$ of local homeos over $X$ is equivalent to the topos ${\Shv(X)}$  of sheaves over the same space (see Section~{II.6} in \cite{maclane2}). The equivalence ${\Shv(X) \rightarrow \LH/X}$ sends a sheaf ${P:\opCat{\openx} \rightarrow \Set}$ to the {\em bundle of germs} of $P$ defined as follows.
For each ${x\in X}$, let ${P_x = \varinjlim_{x \in U} P U}$ where the colimit is taken over the poset of open neighborhoods of $x$ (ordered by reverse inclusion). The family of ${P_x}$'s determines a function ${\pi:\sum_{x\in X} P_x \rightarrow X}$. Also, each ${s\in P U}$  determines an obvious function ${\dot{s}:U \rightarrow \sum_{x\in X} P_x}$ such that ${\pi \dot{s}:U \rightarrow X}$ is the inclusion ${U \rightarrow X}$. The set ${\sum_{x\in X} P_x}$ is topologized by taking as a base of opens all the images of the functions $\dot{s}$. This topology makes $\pi$ into a local homeo, the above mentioned bundle of germs.

    Any basis $B$ for the topology of $X$ may be considered as a subposet  ${B\rightarrow \openx}$. The usual Grothendieck topology on ${\openx}$ restricts along ${B \rightarrow \openx}$ and the resulting morphism of sites determines an equivalence ${\Shv(B) \rightarrow \Shv(X)}$; see Theorem~{II.1.3} in \cite{maclane2}.
   The composite equivalence ${\Shv(B) \rightarrow \Shv(X) \rightarrow \LH/X}$ is very similar to the previous one because, by finality (in the sense of Section~{IX.3} of \cite{maclane}), the colimit  ${P_x = \varinjlim_{x \in U} P U}$ may be calculated using only basic open sets.

    We now concentrate on spectral spaces, following \cite{Simmons80, Johnstone1982}.
    The {\em spectrum} of a distributive lattice $D$ is the topological space ${\sigma D}$ whose points are the lattice morphisms ${D\rightarrow 2}$ (where $2$ denotes the totally ordered lattice ${ \{\bot < \top \} }$) and whose topology has, as a basis, the  subsets ${\sigma(a) \subseteq \sigma D}$ (with ${a\in D}$) defined by ${\sigma(a) = \{ p \in \sigma D \mid p a = \top \in 2 \} \subseteq \sigma D}$. In this way, we may identify $D$ with the basis of its spectrum and obtain an equivalence ${\Shv(D) \rightarrow \LH/\sigma D}$. It assigns to each sheaf ${P:\opCat{D} \rightarrow \Set}$ the local homeo  whose fiber $P_p$ over the point ${p:D \rightarrow 2}$ in ${\sigma D}$ is
\[ P_p =  \varinjlim_{p \in  \sigma(a)} P a = \varinjlim_{p a = \top} P a \]
as follows from the more general descriptions above. In other words, the fiber is the colimit of the functor
$$\xymatrix{ \opCat{(p^{-1} \top)} \ar[r] & \opCat{D}  \ar[r]^-P  & \Set }$$
where ${p^{-1} \top \subseteq D}$ is considered as a poset inclusion.

   Now let us consider an integral rig $A$ and its reticulation ${\eta:A \rightarrow L A}$. Precomposition with $\eta$ gives a bijection between the points of the spectrum ${\sigma(L A)}$ and the set ${\iRig(A, 2)}$. Indeed, it is just the bijection ${\dLat(L A, 2) \rightarrow \iRig(A, 2)}$.
Also, for each ${x \in A}$, the basic open ${\sigma(\eta x) \subseteq \sigma(L A)}$ is the subset of those ${p:L A \rightarrow 2}$ such that ${p (\eta x) = \top}$. So ${\sigma(\eta x)}$ may be identified with the subset of ${\iRig(A, 2)}$ given by those ${p:A \rightarrow 2}$ such that ${p x = \top}$. For this reason it is convenient to define the {\em spectrum} of the integral rig $A$ as the topological space ${\sigma A}$  whose set of points is ${\iRig(A, 2)}$ and whose topology is determined by the basic open sets of the form
${\sigma x = \{p:A \rightarrow 2 \mid p x = \top \}}$. We stress that for an arbitrary integral rig $A$ there may be different ${x, y \in A}$ such that ${\sigma(x) = \sigma(y)}$. On the other hand, if the integral rig $A$ is a distributive lattice then ${\sigma A}$ is the spectrum of the lattice $A$ as we originally defined it.

   Putting things together we obtain an equivalence ${\Shv(L A) \rightarrow \LH/\sigma A}$. It sends a sheaf ${P \in \Shv(L A)}$ to a local homeo over ${\sigma A}$ whose  fiber ${P_p}$ over a point ${p:A \rightarrow 2}$ in ${\sigma A}$ may be described as
\[ P_p =   \varinjlim_{p x = \top} P (\eta x) \]
where $x$ ranges over the elements of $A$. In other words, the fiber is the colimit of the functor
$$\xymatrix{ \opCat{(p^{-1} \top)} \ar[r] & \opCat{A}  \ar[r]^-{\eta} & \opCat{(L A)} \ar[r]^-P  & \Set }$$
where ${p^{-1} \top \subseteq A}$ is considered as a poset inclusion.

   Consider now the representing sheaf ${\overline{A} \in \Shv(L A)}$ of $A$. The fiber over the point ${p:A \rightarrow 2}$ is
\[  (\overline{A})_p =   \varinjlim_{p x = \top} \overline{A} (\eta x)  = \varinjlim_{p x = \top}  A[x^{-1}] = A[ (p^{-1} \top)^{-1}] \]
using Lemma~\ref{LemColomitFilters} in the last step. In other words, when we look at the representing sheaf ${\overline{A} \in \Shv(L A)}$ as a local homeo over ${\sigma A}$, then the fiber over a point ${p:A \rightarrow 2}$ in ${\sigma A}$ is the localization of $A$ at the multiplicative submonoid ${p^{-1}\top \rightarrow A}$.

   Returning to topological spaces, every point ${x:1 \rightarrow X}$ of a space $X$ determines a geometric morphism ${\Set \rightarrow \LH/X}$ whose inverse image ${\LH/X \rightarrow \Set}$ sends a local homeo to the corresponding fiber over $x$.
Since inverse images of geometric morphisms preserve finite limits and colimits, they preserverve really local integral rigs.
So, as a byproduct of our main result, we obtain the following.

\begin{corollary}
Every integral rig may be represented as the algebra of global sections of a local homeo (over the spectral space ${\sigma A}$) whose fibers are really local integral rigs.
\end{corollary}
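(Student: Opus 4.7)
The plan is to invoke Theorem~\ref{ThmMain} as the main input and then transport the representing sheaf across the equivalence ${\Shv(L A) \simeq \LH/\sigma A}$ described in the preceding discussion. Concretely, for an integral rig $A$ I would start from the unit ${A \cong \overline{A}\top = \Gamma(L A, \overline{A})}$ supplied by the adjunction, where ${\overline{A}}$ is the integral rig in ${\Shv(L A)}$ built in Section~9 and known by Lemma~\ref{LemRepresentationsAreSo} to be really local (witnessed by ${\chi_{\overline{A}}:\overline{A} \rightarrow \Lambda_{L A}}$).

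Next, I would apply the equivalence ${\Shv(L A) \rightarrow \LH/\sigma A}$ recalled at the start of Section~\ref{SecLH} to $\overline{A}$, obtaining a local homeo ${\pi:E \rightarrow \sigma A}$. Global sections of the resulting bundle coincide, under this equivalence, with ${\overline{A}\top \cong A}$, so the "global sections" half of the statement is immediate from Theorem~\ref{ThmMain}.

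The substantive step is identifying the fibers. Each point ${p:A \rightarrow 2}$ of ${\sigma A}$ corresponds to a geometric morphism ${\Set \rightarrow \Shv(L A)}$ whose inverse image sends a sheaf to its stalk at $p$. Using the explicit colimit computation
\[ (\overline{A})_p = \varinjlim_{p x = \top} \overline{A}(\eta x) = \varinjlim_{p x = \top} A[x^{-1}] = A[(p^{-1}\top)^{-1}] \]
carried out just above the statement (via Lemma~\ref{LemColomitFilters}), each fiber is a localization of $A$ at the multiplicative submonoid ${p^{-1}\top \rightarrow A}$, hence an integral rig in $\Set$. To conclude that this fiber is really local, I would appeal to the observation emphasized immediately before the corollary: inverse images of geometric morphisms preserve finite limits and colimits, and "really local integral rig" is characterized by such data (Lemma~\ref{LemCharReallyLocalInCoherentCat}), so the property descends from ${\overline{A}}$ in ${\Shv(L A)}$ to ${(\overline{A})_p}$ in $\Set$.

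The only step with any friction is making sure the equivalence ${\Shv(L A) \simeq \LH/\sigma A}$ genuinely transports the algebraic structure and the really-local property, rather than merely the underlying sheaf; but this is already built into the fact that the equivalence is induced by a geometric morphism and that the theory of really local integral rigs is preserved by inverse images, as noted in the paragraph preceding the corollary. Assembling these ingredients gives the claim.
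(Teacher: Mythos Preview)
Your proposal is correct and follows essentially the same route as the paper: the corollary is presented there as a direct byproduct of Theorem~\ref{ThmMain} together with the equivalence ${\Shv(L A) \simeq \LH/\sigma A}$ and the observation that inverse images of the point geometric morphisms ${\Set \rightarrow \LH/\sigma A}$ preserve really local integral rigs because they preserve finite limits and colimits. Your write-up is simply a more explicit unpacking of the paragraph immediately preceding the corollary, with the same lemmas (Lemmas~\ref{LemRepresentationsAreSo}, \ref{LemColomitFilters}, \ref{LemCharReallyLocalInCoherentCat}) doing the work.
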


   We can further restrict the result as follows.

\begin{corollary}
Every integral rig is a subdirect product of really local integral rigs.
\end{corollary}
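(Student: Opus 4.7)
The plan is to exhibit $A$ as a subdirect product indexed by the points of its spectrum $\sigma A$, by analyzing the canonical comparison map
$$\Phi\colon A \longrightarrow \prod_{p \in \sigma A} A[(p^{-1}\top)^{-1}]$$
whose component at $p$ is the localization $A \to A[(p^{-1}\top)^{-1}]$. By the description recalled just above, each such localization is the fiber of the representing local homeo $\overline{A}$ at the point $p$. To conclude, three facts must be checked: every factor is a really local integral rig, every projection is surjective, and $\Phi$ itself is injective.

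The first point is precisely the content of the preceding corollary, together with the observation that stalk functors at points of a topological space are inverse images of geometric morphisms, hence preserve the really-local property. The second point is immediate from Lemma~\ref{LemSubMultiplicationsCanBeCollapsed}: every localization $A \to A[F^{-1}]$ of an integral rig is presented there as the quotient by the congruence $\equiv_F$ and is therefore surjective; in particular $A \to A[(p^{-1}\top)^{-1}]$ is surjective for every $p$.

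The decisive step, which I expect to be the main obstacle, is the injectivity of $\Phi$. The cleanest route transfers $\overline{A}$ across the equivalence $\Shv(L A) \simeq \Shv(\sigma A)$ recalled in the preceding section: on a topological space, any sheaf is separated, so two global sections which agree in every stalk must be equal, which gives $a = b$ directly from the hypothesis. A more hands-on argument at the level of the coherent site of $L A$ also works: equality of the germs of $a,b$ at $p$ combined with the filtered colimit description in Lemma~\ref{LemColomitFilters} produces some $x_p \in A$ with $p x_p = \top$ such that $a$ and $b$ already agree in $A[x_p^{-1}]$; the basic opens $\sigma(x_p)$ cover the compact (spectral) space $\sigma A$, so a finite subcover yields $x_1,\dots,x_n$ with $\bigvee_i \eta x_i = \top$ in $L A$, and since the reticulation $\eta\colon A \to L A$ is local (Lemma~\ref{LemLattification}) this lifts to $\sum_i x_i = 1$ in $A$, making $\{\eta x_i\}$ a covering family of $\top$ in the coherent coverage of $L A$. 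The sheaf axiom for $\overline{A}$ then forces $a = b$.

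The trivial rig is a harmless degenerate case: if $0 = 1$ in $A$ then $\sigma A = \emptyset$, the empty product is the terminal rig, and $A$ is itself terminal, so the statement holds vacuously.
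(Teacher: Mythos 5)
Your proof is correct and follows the same route as the paper: the map into $\prod_{p\in\sigma A}(\overline{A})_p$ is exactly the inclusion of continuous sections of the representing local homeo into all (set-theoretic) sections, whose injectivity is the paper's one-line argument and your ``separated sheaf'' argument in different clothing. You are slightly more careful than the paper in two useful ways---explicitly checking that the projections are surjective via Lemma~\ref{LemSubMultiplicationsCanBeCollapsed}, and supplying the alternative coherent-site argument (where, incidentally, the detour through $\sum_i x_i = 1$ is unnecessary: once the $\sigma(\eta x_i)$ cover $\sigma(LA)$, the Stone embedding of $LA$ into $\mathcal{P}(\sigma(LA))$ already gives $\bigvee_i \eta x_i = \top$, which is what the coherent coverage needs)---but the essential content coincides.
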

\begin{proof}
Let $A$ be an integral rig and let ${\sum_{p\in \sigma A} (\overline{A})_p \rightarrow \sigma A}$ be  its representing local homeo. We know that its algebra of continuous sections is iso to $A$; but the set of all (non continuous) sections is isomorphic to ${\prod_{p\in \sigma A} (\overline{A})_p}$. The resulting inclusion ${A \rightarrow \prod_{p\in\sigma A} (\overline{A})_p}$ is determined by the localizations ${A \rightarrow (\overline{A})_p = A[(p^{-1} \top)^{-1}]}$.
\end{proof}

   Much as in the case of the main result, the corollaries lift to residuated rigs. So, for example, we obtain the following.

\begin{corollary}
Every integral pre-linear  rig is a subdirect product of totally ordered integral residuated rigs.
\end{corollary}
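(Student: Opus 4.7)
The plan is to repeat, in the residuated and pre-linear setting, the argument already given for the previous corollary (\textit{Every integral rig is a subdirect product of really local integral rigs}), and then invoke Lemma~\ref{LemReallyLocalPrelinear} fiberwise.

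First I would apply Corollary~\ref{CorPreLinear} to the pre-linear integral rig $A$ to obtain its representing sheaf $\overline{A}$ in $\Shv(L A)$. Viewing $\overline{A}$ as a local homeo over the spectral space $\sigma A$, the fiber over a point ${p:A \rightarrow 2}$ is, as computed in Section~\ref{SecLH} using Lemma~\ref{LemColomitFilters}, the localization $(\overline{A})_p = A[(p^{-1}\top)^{-1}]$. By Lemma~\ref{LemLocalizationsLiftToRirigs}, this localization agrees with the one computed in $\riRig$; and since $\pliRig$ is a variety (hence closed under quotients) and the localization is a quotient by the congruence of Lemma~\ref{LemSubMultiplicationsCanBeCollapsed}, each fiber $(\overline{A})_p$ is again a pre-linear integral residuated rig.

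Next I would mimic the proof of the preceding corollary: the set of all (not necessarily continuous) sections of the bundle $\sum_{p\in\sigma A}(\overline{A})_p \rightarrow \sigma A$ is $\prod_{p\in\sigma A}(\overline{A})_p$, and the canonical injection $A \rightarrow \prod_{p\in\sigma A}(\overline{A})_p$ is determined componentwise by the quotient maps $A \rightarrow A[(p^{-1}\top)^{-1}] = (\overline{A})_p$. Injectivity of this map follows from its validity at the level of integral rigs (applied to the underlying integral rig of $A$); it is automatically a morphism of residuated rigs since each component is. The projections onto the factors are the quotient maps themselves, hence surjective, so $A$ is exhibited as a subdirect product.

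It remains to argue that each fiber is totally ordered: each $(\overline{A})_p$ is really local (as a fiber of the representing sheaf, by inverse-image preservation of the characterizing properties via the point ${\Set \rightarrow \LH/\sigma A}$) and pre-linear (by the previous paragraph), so Lemma~\ref{LemReallyLocalPrelinear} forces its canonical order to be total. The only mildly delicate point is checking that everything in sight is a morphism of \emph{residuated} rigs rather than merely of integral rigs, but this is automatic because localizations and inverse images along geometric morphisms preserve residuation (the fiber computation is a filtered colimit of residuated-rig quotients), so no genuine obstacle arises beyond bookkeeping.
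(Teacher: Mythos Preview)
Your proposal is correct and follows essentially the same approach as the paper: the paper does not give a separate proof but simply remarks that the preceding corollary lifts to residuated rigs, which amounts to exactly the steps you spell out (use the representing local homeo, identify the fibers as the localizations $A[(p^{-1}\top)^{-1}]$, embed $A$ into the product of fibers, and invoke Lemma~\ref{LemReallyLocalPrelinear} to see that really local plus pre-linear forces the fibers to be totally ordered). Your version is more detailed than the paper's one-line gesture, but the argument is the same.
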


\section{The representation of MV-algebras}
\label{SecMV}

\newcommand{\mv}{\mathbf{MV}}
\newcommand{\mvRig}{\mathbf{mvRig}}

   An {\em MV-algebra} is usually defined as a structure ${(A, \oplus, 0, \neg)}$ such that ${(A, \oplus, 0)}$ is a commutative monoid and ${\neg:A \rightarrow A}$ is a function such that the following equations hold:
\begin{enumerate}
\item ${\neg\neg x = x}$,
\item ${x\oplus (\neg 0) = \neg 0}$,
\item ${\neg(\neg x \oplus y) \oplus y = \neg(\neg y \oplus x) \oplus x}$.
\end{enumerate}
This determines an algebraic category that we denote by ${\mv \rightarrow \Set}$.
It is well-known that every MV-algebra has an associated lattice structure and a residuated binary operation (see Lemma~{1.1.4(iii)} in \cite{Cignolietal2000} where the residuated operation is denoted by ${\odot}$). In order to make this more explicit and relate it to our work we introduce the following.

\begin{definition} An {\em MV-rig} is an integral residuated rig ${(A, \cdot, 1, +, 0, \multimap)}$ such that the following (Wajsberg) condition:
\[  (x \multimap y) \multimap y = (y \multimap x) \multimap x \]
holds.
\end{definition}

   This definition also determines an algebraic category ${\mvRig \rightarrow \Set}$ and there is a functor ${\mvRig \rightarrow \mv}$ that sends the MV-rig ${(A, \cdot, 1, +, 0, \multimap)}$ to the MV-algebra ${(A, \oplus, \neg)}$ where ${\neg x = x \multimap 0}$ and ${x\oplus y = \neg ((\neg x) \cdot (\neg y))}$. Moreover, it is surely at least folklore that this functor is an equivalence of algebraic categories; in fact, an isomorphism. Its inverse ${\mv \rightarrow \mvRig}$ sends an MV-algebra ${(A, \oplus, \neg)}$ to the MV-rig with the same underlying set and operations defined as follows
   \begin{center}
\begin{tabular}{cc}
$x+y=\neg (\neg x \oplus y) \oplus y$ & $1=\neg 0$
\\
$x\cdot y=\neg (\neg x \oplus \neg y)$ & $x\multimap y=\neg x \oplus y$
\\
\end{tabular}
\end{center}
for every ${x, y \in A}$. It is relevant to recall that for every MV-algebra $A$, the semilattice ${(A, +, 0)}$ extends to lattice structure with  ${u \wedge v = \neg((\neg u) + (\neg v))}$ for every ${u, v \in A}$.

   Every MV-rig is pre-linear so our results imply that every MV-rig is the algebra of global sections of a local homeo whose fibers are totally ordered MV-rigs. In the rest of the section we show that this is essentially the Dubuc-Poveda representation \cite{DubucPoveda2010}.

   From now on, until the end of the section, let ${M = (A, \oplus, \neg)}$ be an MV-algebra and let ${R = (A,  \cdot, 1, +, 0, \multimap)}$ be the associated MV-rig.
   The {\em natural order} of $M$ may defined by declaring that ${x\leq_n y}$ if ${x\cdot (\neg y)  = 0}$. (See Lemma~{1.1.2} in \cite{Cignolietal2000}.) This is equivalent to ${x\leq \neg(\neg y) = y}$ where ${\leq}$ is the canonical order of the rig $R$. Therefore, ${x \leq_n y}$ in $M$ if and only if ${x\leq y}$ in $R$. In other words, the natural order of $A$ as an MV-algebra coincides with the canonical order of $A$ as a rig. So, from now, we drop the ${\leq_n}$ notation.

   An {\em ideal} of the MV-algebra $M$ is a submonoid ${(I, \oplus, 0) \rightarrow  (A, \oplus, 0)}$ such that for every ${x \in I}$ and ${y \in A}$, ${y \leq x}$ implies ${y \in I}$.
   Such an ideal is called {\em proper} if ${1\not\in I}$. It is called {\em prime} if it is proper and also, for any ${x, y\in A}$, either ${(x \cdot (\neg y)) \in I}$ or ${(y \cdot (\neg x)) \in I}$. We will use the following characterization: a proper ideal ${I \rightarrow M}$ is prime if and only if, for every ${x, y \in M}$, ${x\wedge y \in I}$ implies ${x\in I}$ or ${y\in I}$. See, for example, 1.3 in \cite{DubucPoveda2010}.

\begin{lemma}\label{LemPrimeIdealsAs2vauedMaps}
For every ${p:R \rightarrow 2}$ in $\iRig$, the subset ${ \neg(p^{-1} \top) = \{ \neg x \mid p x = \top \} \rightarrow A}$ is a prime ideal of $M$.
\end{lemma}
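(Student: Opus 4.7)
The plan is to check the defining properties of prime ideals in turn, working with $P = p^{-1}\top \subseteq A$ so that $I = \neg P$. I would first establish three basic properties of $P$, all immediate from the fact that $p$ is a rig morphism into $2$: $P$ contains $1$ and is closed under multiplication (so a multiplicative submonoid), $P$ is closed under addition, and $P$ is upward closed in the canonical order of $R$ (because $x \leq y$ gives $y = w + x$ for some $w$, whence $p y = p w + \top = \top$).

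Next I would verify that $I$ is a submonoid of $(A, \oplus, 0)$. For $0 \in I$: since $1 \in P$ and $\neg 1 = 1 \multimap 0 = 0$, we have $0 \in I$. For closure under $\oplus$: given $\neg u, \neg v \in I$ with $u, v \in P$, the translation between MV-rig and MV-algebra operations gives $\neg u \oplus \neg v = \neg(\neg\neg u \cdot \neg\neg v) = \neg(u v)$, and $u v \in P$ by multiplicative closure, so $\neg u \oplus \neg v \in I$. Then $I$ is downward closed: if $w \leq \neg u$ with $u \in P$, then $\neg w \geq \neg\neg u = u$ because $\neg$ is order-reversing in MV-algebras (which follows from its involutivity and the equivalence of $\leq_n$ with the rig order noted in the excerpt); since $P$ is upward closed, $\neg w \in P$, so $w = \neg\neg w \in I$. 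Properness is immediate: $1 \in I$ would force $\neg x = 1$ for some $x \in P$, hence $x = 0$, contradicting $p 0 = 0 \neq \top$ in $2$.

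For primality I would use the characterization recalled in the excerpt, namely that a proper ideal $I$ is prime iff $x \wedge y \in I$ implies $x \in I$ or $y \in I$. Suppose $x \wedge y \in I$, so $x \wedge y = \neg z$ with $z \in P$. Using $x \wedge y = \neg(\neg x + \neg y)$ and applying $\neg$, we obtain $\neg x + \neg y = z \in P$. Now the key observation: the integral rig $2$ is really local (by the example following Definition~\ref{DefReallyLocal}, or directly, since $a + b = 1$ in $2$ means $a \vee b = 1$, forcing $a = 1$ or $b = 1$). Hence $p(\neg x) + p(\neg y) = \top$ in $2$ gives $p(\neg x) = \top$ or $p(\neg y) = \top$, i.e.\ $\neg x \in P$ or $\neg y \in P$, i.e.\ $x \in I$ or $y \in I$, as required.

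The only step with any subtlety is the last one, where the essential ingredient is that the target $2$ of the rig morphism $p$ is really local; once this is noticed, the argument reduces to straightforward translations between the MV-algebra operations and the MV-rig operations $\cdot$, $+$, $\multimap$, and $\neg = (\_) \multimap 0$.
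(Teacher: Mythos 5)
Your proof is correct and follows the same overall structure as the paper's: verify the ideal properties of $I = \neg(p^{-1}\top)$ one at a time, then establish primality using the fact that $p$ takes values in $2$. The ideal part (containing $0$, closure under $\oplus$, downward closure, properness) matches the paper's argument, just phrased through properties of $P = p^{-1}\top$ rather than of $I$ directly. The one real difference is the primality step. You verify the $\wedge$-characterization: assume $x \wedge y \in I$, unwind to $\neg x + \neg y \in P$, and invoke the fact that $2$ is really local to conclude $\neg x \in P$ or $\neg y \in P$. The paper instead verifies the original definition of prime ideal directly: it invokes the MV-identity $\neg(x \cdot (\neg y)) + \neg((\neg x)\cdot y) = 1$ (a consequence of $(x \cdot \neg y) \wedge (y \cdot \neg x) = 0$), applies $p$, and reads off unconditionally that $x\cdot (\neg y) \in I$ or $(\neg x)\cdot y \in I$. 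Both routes rest on the same essential point that you rightly flag as the crux — in $2$, a sum equal to $\top$ forces one summand to be $\top$ — but the paper's version has a small bonus of not needing to pass through the $\wedge$-characterization that it states without proof, while yours is arguably cleaner in making the ``really local'' observation explicit and thus tying this lemma back to the conceptual framework of the rest of the paper.
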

\begin{proof}
For brevity let ${I = \neg(p^{-1} \top)}$.
Since ${p 1 = \top}$, ${0 = \neg 1 \in I}$.
Also, if ${1 \in I}$ then there exists ${x\in A}$ such that ${p x = \top}$ and ${\neg x = 1}$. Then ${x = 0}$ and so ${p 0 = \top}$, absurd.
To prove that $I$ is closed under $\oplus$, let ${x, y \in A}$ be such that ${p x = \top = p y}$. We need to check that ${(\neg x) \oplus (\neg y) = \neg(x\cdot y) \in I}$; but this holds because ${p(x\cdot y) = (p x) \wedge (p y) = \top}$.

   Assume now that ${x = p u = \top}$ and that ${y \leq \neg u}$. Then ${u \leq \neg y}$, so ${p(\neg y) = \top}$ and hence ${y \in I}$.
To complete the proof recall that, in any MV-algebra, ${\neg(x\cdot (\neg y)) + \neg((\neg x)\cdot y) = 1}$. (See, for example, Proposition~{1.1.7} in \cite{Cignolietal2000} which shows that ${(x \cdot (\neg y)) \wedge (y \cdot (\neg x)) = 0}$.) Then we have that ${p( \neg(x\cdot (\neg y))) \vee p(\neg((\neg x)\cdot y) = \top}$ and so, either ${p( \neg(x\cdot (\neg y))) = \top}$ or ${p(\neg((\neg x)\cdot y)) = \top}$. Therefore, ${x\cdot (\neg y) \in I}$ or ${(\neg x)\cdot y \in I}$.
\end{proof}

  Those familiar with Wajsberg algebras may recognize ${p^{-1}\top}$ as an implicative filter. See Section~{4.2} in \cite{Cignolietal2000}.

\begin{proposition}\label{PropPrimeIdeals}
For any prime ideal ${I \rightarrow M}$ there exists a unique ${p:R \rightarrow 2}$ in $\iRig$ such that ${I = \neg(p^{-1} \top)}$.
\end{proposition}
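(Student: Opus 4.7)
The plan is to define $p:R\to 2$ explicitly by the formula $p(x) = \top$ iff $\neg x \in I$, and then verify that (a) $p$ is a rig morphism, (b) $\neg(p^{-1}\top) = I$, and (c) $p$ is the unique such map. Since $I$ is a proper ideal we have $1\notin I$, i.e.\ $\neg 0 \notin I$, so $p(0)=\bot$; and $0\in I$ gives $p(1)=\top$, so the two constants are handled immediately.

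For preservation of $\cdot$, I would compute $p(xy) = \top$ iff $\neg(xy) = \neg x \oplus \neg y \in I$; the forward direction uses that $\neg x, \neg y \leq \neg x \oplus \neg y$ and downward closure of $I$, while the converse uses closure of $I$ under $\oplus$. Hence $p(xy) = p(x)\wedge p(y)$. For preservation of $+$, this is the step where primality is crucial. Recalling from the preamble that $u\wedge v = \neg(\neg u + \neg v)$, one derives the De Morgan identity $\neg(x+y) = \neg x \wedge \neg y$. Therefore $p(x+y) = \top$ iff $\neg x \wedge \neg y \in I$, which by the characterization of prime ideals recalled just before Lemma~\ref{LemPrimeIdealsAs2vauedMaps} is equivalent to $\neg x \in I$ or $\neg y \in I$, i.e.\ to $p(x)\vee p(y) = \top$. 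Thus $p$ preserves $+$ as well.

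By construction $p^{-1}\top = \{x : \neg x \in I\}$, so $\neg(p^{-1}\top) = \{\neg x : \neg x \in I\} = I$ (using $\neg\neg x = x$), giving the existence half. For uniqueness, if $q:R\to 2$ is another rig map with $\neg(q^{-1}\top) = I$, then for any $x\in A$ we have $q(x) = \top$ iff $\neg x \in I$ iff $p(x) = \top$, so $q = p$.

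The only delicate point is the preservation of $+$, where one must invoke precisely the primality assumption in the $\wedge$-formulation stated in the paper; everything else is a direct manipulation of the MV/MV-rig translation table and the ideal axioms, already rehearsed in the proof of Lemma~\ref{LemPrimeIdealsAs2vauedMaps}.
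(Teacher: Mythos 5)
Your proposal is correct and follows essentially the same approach as the paper: define $p$ as the characteristic map of $\neg I$, verify preservation of the constants, of $\cdot$ via the $\oplus$/downward-closure characterization, and of $+$ via De Morgan and primality, then read off $\neg(p^{-1}\top)=I$ and conclude uniqueness from $\neg\neg=\mathrm{id}$. The only cosmetic difference is that the paper proves uniqueness first and makes the step ``$\neg y=\neg x\Rightarrow y=x$'' explicit, while you fold it into the iff-chain; this is a matter of exposition, not content.
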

\begin{proof}
Let ${p, q:R \rightarrow 2}$ in ${\iRig}$ be such that the prime ideals ${\neg(p^{-1} \top)}$ and ${\neg(q^{-1} \top)}$ coincide. We want to show that ${p = q}$. For this, it is enough to prove that for every ${x \in R}$, ${p x = \top}$ if and only if  ${q x = \top}$. It suffices to establish only one implication so let ${p x = \top}$. Then  ${\neg x \in \neg(p^{-1} \top) = \neg(q^{-1} \top)}$. So there exists a ${y \in R}$ such that ${q y = \top}$ and ${\neg y = \neg x}$. Then ${x = y}$ and ${q x = \top}$.

   To prove the existence part of the statement let ${p:A \rightarrow 2}$ be the characteristic map (in $\Set$) of the subset ${\neg I = \{\neg x \mid x\in I\} \subseteq A}$. In other words, for every ${x\in A}$, ${p x = \top}$ if and only if ${\neg x \in I}$. Clearly, ${\neg(p^{-1} \top) = \neg(\neg I) = I}$, so it only remains to show that  $p$ underlies a rig morphism ${R \rightarrow 2}$. Since ${0\in I}$, ${p 1 = \top}$. Since $I$ is proper, ${p 0 = \bot}$. To prove that $p$ preserves the additive structure notice that ${p(x + y) = \top}$ if and only if ${\neg(x + y) = (\neg x) \wedge (\neg y) \in I}$. Since $I$ is prime, the previous statement holds if and only if ${\neg x \in I}$ or ${\neg y \in I}$, but this is equivalent to ${(p x) \vee (p y) = \top}$.

   Finally, to prove that ${p:A \rightarrow 2}$ preserves multiplication, observe that ${p(x\cdot y) = \top}$ if and only if ${\neg(x\cdot y) \in I}$ if and only if ${(\neg x) \oplus (\neg y) \in I}$ if and only if ${\neg x \in I}$ and ${\neg y \in I}$ if and only if ${(p x)\wedge (p y) = \top}$.
\end{proof}

   We now compare the spectrum ${\sigma R}$ of the integral rig $R$ (in the sense of Section~\ref{SecLH}) with the space ${Z_M}$ for the  MV-algebra $M$ considered in \cite{DubucPoveda2010}.  The points  of ${Z_M}$ are exactly the prime ideals of $M$. The topology on ${Z_M}$ is determined by the basic opens of the form ${W_a = \{ I \in Z_M \mid a \in I \}}$.

\begin{corollary}
The bijection of Proposition~\ref{PropPrimeIdeals} underlies an iso between the spaces ${\sigma R}$ and ${Z_M}$.
\end{corollary}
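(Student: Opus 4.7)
The plan is to verify that the bijection already established at the level of points in Proposition~\ref{PropPrimeIdeals} sends basic opens to basic opens in both directions. Once this is done, both spaces have a basis indexed by elements of the common underlying set $A$, and the induced correspondence between bases is (up to the involution $\neg$) the identity, so the bijection will automatically be a homeomorphism.

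Concretely, I would first recall the two bases: the basic opens of $\sigma R$ are the sets $\sigma x = \{p : R \to 2 \mid p x = \top\}$ indexed by $x \in R$, while those of $Z_M$ are $W_a = \{I \in Z_M \mid a \in I\}$ indexed by $a \in A$. Let $\Phi : \sigma R \to Z_M$ denote the bijection of Proposition~\ref{PropPrimeIdeals}, sending $p$ to $I_p = \neg(p^{-1}\top)$, with inverse $\Psi$ built from the characteristic function of $\neg I$.

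Next I would compute the image of a basic open. For $x \in A = R$ and $p \in \sigma R$, we have $p \in \sigma x$ iff $p x = \top$ iff $\neg x \in \neg(p^{-1}\top) = I_p$ iff $\Phi(p) \in W_{\neg x}$. Hence $\Phi(\sigma x) = W_{\neg x}$. Since the involution $\neg : A \to A$ is a bijection (using $\neg\neg y = y$), every $a \in A$ is of the form $\neg x$ for a unique $x = \neg a$, and so as $x$ ranges over $A$ the sets $W_{\neg x}$ exhaust the entire basis of $Z_M$. Symmetrically, $\Psi(W_a) = \sigma(\neg a)$, so $\Psi$ sends basic opens to basic opens as well.

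This shows that $\Phi$ and $\Psi$ are continuous open bijections, hence a homeomorphism. I do not anticipate any serious obstacle: the only minor point to keep in mind is that different elements of $R$ may determine the same basic open of $\sigma R$ (as pointed out in Section~\ref{SecLH}), but this causes no problem because we are checking equality of sets of opens, not injectivity of the indexing. Thus the corollary follows essentially by unpacking definitions.
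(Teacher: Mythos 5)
Your proposal is correct and takes essentially the same approach as the paper: both verify that the point-level bijection matches up basic opens via the involution $\neg$, computing $\Phi(\sigma x) = W_{\neg x}$ and its inverse image analogue. The only cosmetic difference is that you phrase the verification symmetrically as ``both directions send basic opens to basic opens,'' whereas the paper computes one preimage and one image, but the underlying calculation is identical.
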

\begin{proof}
   Recall (Section~\ref{SecLH}) that the  spectrum ${\sigma R}$ of the integral rig $R$ has ${\iRig(R, 2)}$ as its set of points. So Proposition~\ref{PropPrimeIdeals} induces a bijection ${\varphi:\sigma R \rightarrow Z_M}$. We claim that it is continuous. To see this recall that the topology of ${\sigma R}$ is given by the basic sets of the form ${\sigma(x) = \{ p:R \rightarrow 2 \mid p x = \top \}}$ with ${x \in A}$.
   To prove that $\varphi$ is continuous consider a basic open ${W_a}$ of $Z_M$ and calculate
\[ p \in \varphi^{-1} W_a \quad \Leftrightarrow \quad  \varphi p \in W_a \quad \Leftrightarrow \quad  a\in \neg (p^{-1}\top) \quad \Leftrightarrow \quad p(\neg a) = \top \]
to conclude that ${\varphi^{-1} W_a = \sigma(\neg a)}$. To prove that this continuous bijection is an iso it is enough to prove that it is open. Let ${\sigma(x)}$ be a basic open in ${\sigma R}$ and calculate
\[ \varphi(\sigma(x)) = \{ \neg (p^{-1} \top) \mid p x =\top \} = \{ I \mid \neg x \in I \} = W_{\neg x} \]
to complete the proof.
\end{proof}

   Any continuous function ${f:X \rightarrow Y}$ between topological spaces induces a geometric morphism ${\LH/X \rightarrow \LH/Y}$ between the corresponding toposes of sheaves. The inverse image ${f^*:\LH/Y \rightarrow \LH/X}$ sends a local homeo over $Y$ to its pullback along ${f:X \rightarrow Y}$ (calculated as in the category of topological spaces). (See Theorem~{II.9.2} in \cite{maclane2}.) In particular, for any local homeo ${E \rightarrow Y}$, the fiber ${(f^* E)_x}$ over ${x\in X}$ may be identified with the fiber ${E_{f x}}$ of ${E \rightarrow Y}$ over ${f x}$.

   In particular, the iso ${\varphi:\sigma R \rightarrow Z_M}$ induces an equivalence ${\varphi^*:\LH/Z_M \rightarrow \LH/\sigma R}$. The Dubuc-Poveda representation of the MV-algebra $M$ consists of a space ${E_M}$ whose underlying set is the coproduct ${\sum_{I\in Z_M} M/I}$ where ${M/I}$ is the quotient of $M$ by the prime ideal $I$. The topology on ${E_M}$ is such that the indexing ${E_M \rightarrow Z_M}$ is a local homeo whose algebra of global sections is isomorphic to $M$. Now calculate
\[ (\varphi^* E_M)_p = (E_M)_{\varphi p} =  M/(\varphi p) \]
and observe that, since ${M \rightarrow M/(\varphi p)}$ is the universal way of forcing the ideal ${\phi p \rightarrow M}$ to be $0$, then it also has the universal way of inverting
${\neg (\varphi p) = \neg (\neg (p^{-1} \top)) = p^{-1} \top}$. Altogether, ${(\varphi^* E_M)_p = R[(p^{-1}\top)^{-1}] = (\overline{R})_p}$ where ${(\overline{R})_p}$ is the fiber of our representation as explained in Section~\ref{SecLH}. Roughly speaking, the representing local homeos in \cite{DubucPoveda2010} are essentially the same as ours.

   Alternatively, it is proved in 2.3 of \cite{DubucPoveda2010} that the local homeo ${E_M \rightarrow Z_M}$ is the result of applying the Godement construction to the presheaf that sends ${W_a}$ to ${M/(a)}$ where ${(a)}$ is the ideal generated by ${a \in M}$. Again, ${M/(a)}$ is, up to the equivalence between MV-algebras and MV-rigs, the same as ${R[(\neg a)^{-1}]}$. So we see again that our construction, restricted to MV-algebras, produces essentially the same result as that by Dubuc and Poveda.

  The main difference with the main result in \cite{DubucPoveda2010} is that our category of representing objects is much smaller. Notice, in particular, that we do not deal with arbitrary continuous maps but only with spectral ones. We leave a more detailed comparison for the reader.

\end{document}